\documentclass[11pt]{amsart}

\usepackage{amsmath,amssymb,amsthm,mathtools,mathrsfs}
\usepackage{enumitem}
\usepackage{microtype}
\usepackage{needspace}
\usepackage{xurl}
\usepackage[colorlinks=true,linkcolor=blue,citecolor=blue,urlcolor=blue]{hyperref}
\usepackage[margin=1.15in]{geometry}

\numberwithin{equation}{section}

\newcommand{\R}{\mathbb R}
\newcommand{\N}{\mathbb N}
\newcommand{\Cone}{\operatorname{Cone}}
\newcommand{\BR}{\operatorname{BR}}
\newcommand{\im}{\operatorname{im}}
\newcommand{\cL}{\mathcal L}
\newcommand{\cC}{\mathcal C}
\newcommand{\cG}{\mathcal G}
\newcommand{\cK}{\mathcal K}
\newcommand{\rec}{\operatorname{rec}}

\theoremstyle{plain}
\newtheorem{theorem}{Theorem}[section]
\newtheorem{proposition}[theorem]{Proposition}
\newtheorem{lemma}[theorem]{Lemma}
\newtheorem{corollary}[theorem]{Corollary}
\theoremstyle{definition}
\newtheorem{definition}[theorem]{Definition}
\newtheorem{theirtheorem}{Theorem}

\theoremstyle{remark}
\newtheorem{remark}[theorem]{Remark}

\title[Bounded Ratios I: The Ternary Theory]
{Bounded Ratios of Lorentzian Polynomials I:
\texorpdfstring{\\}{ }
The Ternary Theory and Optimal Bounding Constants}

\author{Dijia Chen}
\address{Department of Mathematics, University of Wisconsin--Madison, Madison, WI, USA}
\email{dchen426@wisc.edu}

\author{Bowen Gan}
\address{Institute of Mathematical Sciences, ShanghaiTech University, Shanghai, China}
\email{ganbw2023@shanghaitech.edu.cn}

\author{Ivy Liu}
\address{Department of Mathematics, University of Wisconsin--Madison, Madison, WI, USA}
\email{ivy.liu@wisc.edu}

\author{Zemeng Wang}
\address{Department of Mathematics, University of Wisconsin--Madison, Madison, WI, USA}
\email{zwang3694@wisc.edu}

\author{Chengzhi Wu}
\address{Department of Mathematics, University of Wisconsin--Madison, Madison, WI, USA}
\email{cwu493@wisc.edu}

\keywords{Lorentzian polynomials, bounded-ratio cones}

\begin{document}
\raggedbottom

\begin{abstract}
We study bounded ratios and optimal bounding constants among the normalized coefficients of ternary Lorentzian polynomials. For every fixed $M$-convex support and in arbitrary degree, we give an explicit presentation of the bounded-ratio cone in terms of quadratic Hessian slices. We then express the optimal bounding constants through a variational formula combining local support functions with linear compatibility constraints between slices. For full support, we determine all compatibility relations in arbitrary degree; in degree three, this yields explicit optimal constants for every two-generator section. Finally, we compare the resulting Lorentzian bounds with those for volume polynomials and rank-three matroid basis profiles.
\end{abstract}

\maketitle

\tableofcontents

\section{Introduction}

Br\"and\'en and Huh introduced Lorentzian polynomials as a polynomial
counterpart of the Hodge--Riemann relations \cite{BH}. The class contains
volume polynomials of convex bodies and basis-generating polynomials of
matroids, and it provides a common source for log-concavity phenomena in
geometry and combinatorics. A homogeneous polynomial with nonnegative
coefficients is Lorentzian precisely when its support is $M$-convex and the
Hessian of every quadratic derivative has at most one positive eigenvalue
\cite[Theorem~2.25]{BH}.

We study explicit local presentations and sharp constants for the
multiplicative coefficient inequalities forced by this local spectral
condition. Write a degree-$d$ polynomial in factorial normalization as
\[
f(\mathbf x)
=
\sum_{\alpha\in S}
c_\alpha\frac{\mathbf x^\alpha}{\alpha!},
\qquad c_\alpha>0,
\]
where $S\subseteq\Delta_3^d$ is $M$-convex. For
$w=(w_\alpha)_{\alpha\in S}\in\mathbb R^S$, set
\[
R_w(f)
=
\prod_{\alpha\in S}c_\alpha^{w_\alpha},
\qquad
B_S(w)
=
\sup_{f\in\cL_S^+}R_w(f).
\]
We call $w$ a \emph{bounded ratio} when $B_S(w)$ is finite. The problem has
two parts: one must first determine which exponents are bounded and then find
their sharp constants.

The quadratic case identifies the local building blocks. Huang, Huh, Soskin,
and Wang proved that the bounded-ratio cone of a positive
$3\times3$ Lorentzian matrix $P=(p_{ij})$ is generated by the three
triangular ratios
\[
\frac{p_{11}p_{23}}{p_{12}p_{13}},
\qquad
\frac{p_{22}p_{13}}{p_{12}p_{23}},
\qquad
\frac{p_{33}p_{12}}{p_{13}p_{23}}
\]
\cite[Theorem~B and Example~1.4]{HHSW}. Pulling these inequalities back
through the quadratic derivatives of $f$ produces bounded ratios among its
coefficients. We refer to them as the local ratios.

For $\beta\in\Delta_3^{d-2}$, let
\[
S_\beta
:=
\{e_i+e_j:\beta+e_i+e_j\in S\}
\subseteq\Delta_3^2
\]
be the support of the corresponding quadratic Hessian slice. The quadratic
analysis gives a finite set of bounded-ratio generators for each nonempty
$S_\beta$; extending these generators to the global coefficient space gives
a collection $\cG(S)\subseteq\mathbb R^S$. The following is Theorem~\ref{thm:local-global}.

\medskip
\begin{theirtheorem}[Local-to-global bounded ratios]
Fix $d \geq 2$. For every $M$-convex support $S\subseteq\Delta_3^d$,
\[
\BR(\cL_S^+)
=
\operatorname{Cone}(\cG(S)).
\]
\end{theirtheorem}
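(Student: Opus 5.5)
The inclusion $\operatorname{Cone}(\cG(S))\subseteq\BR(\cL_S^+)$ is the easy half, and I would prove it first. For $f\in\cL_S^+$ and $\beta\in\Delta_3^{d-2}$ with $S_\beta\neq\emptyset$, the derivative $\partial^\beta f$ is a quadratic Lorentzian polynomial. Its Hessian is the matrix $(c_{\beta+e_i+e_j})_{ij}$, supported on $S_\beta$. Each generator in $\cG(S)$ is a local bounded ratio of such a Hessian slice, extended by zero. By the quadratic theory of \cite{HHSW} (or the paper's analysis of each slice support), its value on $f$ is bounded by the corresponding quadratic constant. Bounded ratios are closed under products and positive powers, since $R_{w+w'}=R_wR_{w'}$ and $R_{tw}=R_w^t$. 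Hence the whole cone is bounded.

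The reverse inclusion is the substance, and I would argue it by duality. If $w\notin\operatorname{Cone}(\cG(S))$, then, since the cone is finitely generated and closed, there is a separating vector $y\in\R^S$ with $\langle g,y\rangle\le 0$ for all $g\in\cG(S)$ and $\langle w,y\rangle>0$. I would then construct a one-parameter family $f_t$ with $c_\alpha(t)\asymp t^{y_\alpha}$ as $t\to\infty$. Then $\log R_w(f_t)\sim \langle w,y\rangle\log t\to\infty$, so $w$ is unbounded.

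The key step is therefore a realization lemma. Every $y$ in the dual cone $\cG(S)^\vee$ should arise, up to lower-order corrections, as the exponent vector of Lorentzian polynomials on $S$. The dual-cone condition says exactly that for each slice $\beta$, the restriction $y|_{\beta+S_\beta}$ satisfies the tropical triangle inequalities of the quadratic case: for instance $y_{11}+y_{23}\le y_{12}+y_{13}$, and the analogous ones for degenerate supports. Equivalently, each tropicalized Hessian slice is M-concave. I would aim to show that $y$ is then an M-concave function on $S$. In rank three, for degree $d$, the M-concavity exchange axiom should reduce to local exchanges inside unit simplices of the form $\beta+\Delta_3^2$, using the known local characterization of M-concave (valuated-matroid-type) functions by exchanges at distance two (Murota). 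With $y$ M-concave, the polynomial $f_t=\sum_\alpha t^{y_\alpha}x^\alpha/\alpha!$ is Lorentzian for large $t$, or becomes so after a small perturbation such as $t^{y_\alpha}(1+o(1))$. The tool here is Brändén--Huh's result that exponentiated M-concave functions give Lorentzian polynomials in the limit. Concretely, the normalized polynomial $\sum_\alpha q^{-\nu(\alpha)}x^\alpha/\alpha!$ with $\nu=-y$ is Lorentzian for suitable $q$, which would give the family directly.

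I expect the main obstacle to be boundary cases. When the tropical triangle inequalities hold with equality, the leading-order Hessians can be degenerate or fail the one-positive-eigenvalue condition at finite $t$. Brändén--Huh's statement also needs $q$ in a specific range rather than arbitrary growth. My first attempt would be to replace $y$ by a nearby point in the interior of the dual cone, which is still strictly positive against $w$, since the dual cone is full-dimensional when $\operatorname{Cone}(\cG(S))$ is pointed. If it is not pointed, I would first quotient by its lineality space, which comes from torus rescalings $x_i\mapsto\lambda_i x_i$ and scalar multiplication. I would then check that strictly M-concave perturbations yield honest Lorentzian $f_t$ for all large $t$ via the Hessian criterion \cite[Theorem~2.25]{BH} applied slice by slice. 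A second, less serious issue is that slice supports $S_\beta$ can be degenerate: segments, points, or rank-deficient patterns. For these the local generators differ, and I would handle them by the same tropical reasoning case by case.
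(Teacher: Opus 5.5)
Your proof is correct in outline, but your reverse inclusion takes a different route from the paper's. The paper uses the same separating vector $q$ and the same family $f_t=\sum_{\alpha\in S}e^{tq_\alpha}\mathbf x^\alpha/\alpha!$, but it checks Lorentzianity directly. The condition $\langle g,q\rangle\le0$ says every local generator ratio of $H_\beta(f_t)$ is at most $1$. In the ternary quadratic case this alone forces Lorentzianity: this is Corollary~\ref{cor:unit-local-ratios}, proved from the explicit slice criteria and the identity $XYZ+2-X-Y-Z=(1-X)(1-Y)+(1-Z)(1-XY)$. The Hessian-slice characterization then gives $f_t\in\cL_S^+$ for every $t\ge0$. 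No M-convexity, no Murota local exchange theorem and no Br\"and\'en--Huh $q^{\nu}$ theorem is needed; in effect the paper reproves the ternary case of the latter slice by slice. Your route instead identifies $\Cone(\cG(S))^\circ$ with the cone of M-concave functions on $S$, which is the Baldi--Kummer viewpoint in the paper's remark. It costs two external theorems, but it explains conceptually why three variables are special: the exchange maximum has a single term, so M-concavity is a linear condition.

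To complete your argument, you should write out the reduction you only sketch. Take $\alpha,\alpha'\in S$ with $\lVert\alpha-\alpha'\rVert_1=4$ and set $\beta=\min(\alpha,\alpha')$ coordinatewise. Then $\beta\in\Delta_3^{d-2}$ and $\alpha,\alpha'\in\beta+S_\beta$. In three variables $\alpha-\alpha'$ is $\pm(2e_i-e_j-e_k)$ or $2e_i-2e_j$, and both admissible exchanges produce the same pair of points. Hence the local exchange condition is one linear inequality: either a triangular generator, or the $U$-type inequality. The latter is the sum of two triangular ones whenever $X$ and $Y$ are defined on the slice, since $U=XY$. M-convexity of $S$ puts the exchanged points in $S$.

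The obstacles you anticipate do not occur. Br\"and\'en--Huh's theorem makes $\sum_\alpha q^{\nu(\alpha)}\mathbf x^\alpha/\alpha!$ Lorentzian for \emph{every} M-convex $\nu$ and \emph{every} $0<q\le1$, i.e.\ for all $t=1/q\ge1$, equality cases included, so no perturbation into the interior is needed. Your lineality contingency also confuses the two cones. The rescalings $x_i\mapsto\lambda_ix_i$ give linear functions $\alpha\mapsto c\cdot\alpha$, which lie in the lineality space of the dual cone. By contrast, $\Cone(\cG(S))$ itself is always pointed, since $a^\circ_\alpha=-\lVert\alpha\rVert_2^2$ pairs to $-2$ or $-4$ with every generator.
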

\medskip

For full support, the local generators
are the elementary rhombus ratios
\[
R_{\beta,i}(f)
=
\frac{c_{\beta+2e_i}c_{\beta+e_j+e_k}}
     {c_{\beta+e_i+e_j}c_{\beta+e_i+e_k}},
\qquad
\{i,j,k\}=\{1,2,3\}.
\]
They span the exposed extreme rays of the bounded-ratio cone, and there are
exactly $3\binom d2$ such rays
(Theorem~\ref{thm:extreme-rays}).

The generating cone does not determine the sharp constants. A coefficient of $f$ can occur in several quadratic
slices, so those slices cannot in general be optimized independently. For a
quadratic support $T$, let $h_T$ denote the support function of its feasible
region of local logarithmic ratios. Let $L_S^\perp$ denote the space of
changes in local weights that preserve the resulting global coefficient
ratio. For a nonnegative collection of local weights
$\lambda=(\lambda_\beta)_\beta$, let $H_S(\lambda)$ denote the logarithm of
the corresponding sharp global bounding constant.

\medskip
\begin{theirtheorem}[Optimal bounding constants]
    Fix $d \geq 2$. For every $M$-convex support $S\subseteq\Delta_3^d$ and every nonnegative
collection of local weights $\lambda$,
\[
H_S(\lambda)
=
\inf_{\eta\in L_S^\perp}
\sum_{\beta\in\Delta_3^{d-2}}
h_{S_\beta}(\lambda_\beta+\eta_\beta).
\]
\end{theirtheorem}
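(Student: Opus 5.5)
The plan is to recast $H_S(\lambda)$ as a convex optimization problem over logarithmic coefficients and then apply linear-programming or Fenchel duality. First I would pass to logarithms. Write $y_\alpha=\log c_\alpha$. By the Br\"and\'en--Huh characterization \cite[Theorem~2.25]{BH}, $f\in\cL_S^+$ exactly when every Hessian slice $\partial^\beta f$ is a Lorentzian quadratic with support $S_\beta$. This condition is invariant under the torus scaling $c_\alpha\mapsto t^\alpha c_\alpha$ and under the overall scalar.

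Let $\pi_\beta:\R^S\to\R^{S_\beta}$ restrict $y$ to the slice $\beta$, and let $F_T\subseteq\R^T$ be the set of log-coefficient vectors of positive Lorentzian quadratics with support $T$. Then
\[
\log\cL_S^+=\{y:\pi_\beta y\in F_{S_\beta}\text{ for all }\beta\}.
\]
Local weights $\lambda_\beta$ act on the local log-ratio coordinates, which are the images of the local generators. The global exponent is $w=\sum_\beta \pi_\beta^*\lambda_\beta$, where $\lambda_\beta$ is identified with an element of $\R^{S_\beta}$ orthogonal to the scaling directions. By definition $L_S^\perp$ is the kernel of the map $(\lambda_\beta)\mapsto\sum_\beta\pi_\beta^*\lambda_\beta$. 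This gives
\[
H_S(\lambda)=\sup\Bigl\{\Bigl\langle \sum_\beta\pi_\beta^*\lambda_\beta,\,y\Bigr\rangle : \pi_\beta y\in F_{S_\beta}\ \forall\beta\Bigr\}.
\]
By construction $h_T(\mu)=\sup_{z\in F_T}\langle\mu,z\rangle$, which is finite on the local bounded cone.

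The inequality $H_S(\lambda)\le\inf_\eta\sum_\beta h_{S_\beta}(\lambda_\beta+\eta_\beta)$ is immediate. For $\eta\in L_S^\perp$ we have $\sum_\beta\pi_\beta^*\eta_\beta=0$, so
\[
\langle w,y\rangle=\sum_\beta\langle\lambda_\beta+\eta_\beta,\pi_\beta y\rangle\le\sum_\beta h_{S_\beta}(\lambda_\beta+\eta_\beta)
\]
for every feasible $y$.

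The reverse inequality is the main obstacle, because it is a no-duality-gap statement. Consider the product set $F=\prod_\beta F_{S_\beta}$ and the linear subspace $V=\im(\pi)$, where $\pi=(\pi_\beta)_\beta:\R^S\to\prod_\beta\R^{S_\beta}$. The feasible set is $F\cap V$, and its support function should be the infimal convolution of $h_F$ with the support function of $V$. The latter is the indicator of $V^\perp=L_S^\perp$, which gives exactly the claimed formula, provided a constraint qualification holds.

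This step needs two ingredients.

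1. \textbf{Convexity.} Each $F_T$ must be convex, or at least we must have $h_T=h_{\overline{\operatorname{conv}}F_T}$ with the hull compatible with the intersection. I would obtain convexity from the explicit description of $3\times3$ Lorentzian matrices in log coordinates, namely the triangular inequalities of \cite{HHSW} together with the reverse-AM--GM-type condition on $\det$. In log coordinates these define a convex region, since each constraint has the form $\log(\text{sum of exponentials})\le\text{linear}$. If full convexity fails, I would instead show directly that every local optimizer can be glued.

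2. \textbf{Constraint qualification.} We need $\operatorname{relint}F\cap V\neq\emptyset$. I would exhibit this with a point such as $y=\log$ of the coefficients of $(x_1+x_2+x_3)^d$ restricted to $S$, suitably perturbed, or of a generic point in the interior of $\cL_S^+$, which is nonempty for $M$-convex $S$ by \cite{BH}. Its slices are strictly Lorentzian, so they lie in the relative interiors of the $F_{S_\beta}$.

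With these in place, Rockafellar's theorem on support functions of intersections gives equality, with the infimum attained when finite. When $w$ is not bounded both sides equal $+\infty$, since the cone $\Cone(\cG(S))$ from Theorem~\ref{thm:local-global} matches the effective domain.
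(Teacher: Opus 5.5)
Your skeleton matches the paper's: weak duality for $\le$, and for $\ge$ Rockafellar's support-function formula for a closed convex set meeting a subspace under a relative-interior condition. The gap is the convexity ingredient, which carries the whole no-duality-gap claim. Your reason is that in log coordinates each constraint has the form $\log(\text{sum of exponentials})\le\text{linear}$. That fails for the full-support determinant condition. In ratio coordinates it reads $XYZ+2-X-Y-Z\ge0$, i.e.\ $e^{x}+e^{y}+e^{z}\le 2+e^{x+y+z}$, with an exponential on both sides. Already along $x=y=z=s$ the function $3e^s-e^{3s}$ is concave near $s=0$. In raw entries, $\det P\ge0$ compares two posynomials, again a difference of convex functions. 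Only the product conditions $x+y\le0$, etc., and the five-point condition $e^r+e^s\le2$ have the form you describe.

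The full-support region is in fact convex, but this needs the argument of Lemma~\ref{lem:row-convex}. On $e^x+e^y<2$ the region is the hypograph of $\phi(x,y)=\log(2-e^x-e^y)-\log(1-e^{x+y})$, a concave function plus a \emph{convex} one. Concavity of $\phi$ comes from an explicit check that $\phi_{xx}\le0$ and $\det\nabla^2\phi\ge0$. Your fallback of gluing local optimizers does not replace this. Local maximizers for the weights $\lambda_\beta$ are generally incompatible across slices, which is exactly why the correction $\eta$ appears. Without convexity, nothing rules out a duality gap.

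Two smaller repairs are also needed. First, $(x_1+x_2+x_3)^d$ has all normalized coefficients equal to $d!$, so every local generator ratio is $1$. That point lies on the boundary ($U=1$, $XY=1$). You need an explicit strictly feasible compatible point, such as $\log c_\alpha=-\lVert\alpha\rVert_2^2$, which makes every $U$-type log-ratio $-4$ and every triangular one $-2$ (Lemma~\ref{lem:strict-compatible-point}).

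Second, $V^\perp=\ker\pi^*\subseteq\prod_\beta\R^{S_\beta}$ is not literally $L_S^\perp\subseteq\R^{I(S)}$. To reduce one to the other, observe that $F_T$ is invariant under the orthogonal complement of the span of the local generators, so $\sigma_{F_T}=+\infty$ off that span. Also observe that the local generators are linearly independent, so that span is identified with $\R^{m(T)}$. The paper avoids this reduction by working directly in ratio coordinates, with $C_S=\prod_\beta\cC_{S_\beta}$ and $L_S=\im(\rho_S)$.
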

\medskip

This is Theorem~\ref{thm:general-optimal}. The formula separates the
nonlinear optimization within each quadratic slice from the linear
compatibility between different slices.

For full support these compatibility conditions admit a completely explicit
description. Write
\[
r_{\beta,i}:=\log R_{\beta,i},
\qquad
\beta\in\Delta_3^{d-2},
\quad i\in[3].
\]

\medskip
\begin{theirtheorem}[Explicit compatibility]
    Fix $d \geq 3$. A collection
\[
r=(r_{\beta,i})_{\beta\in\Delta_3^{d-2},\,i\in[3]}
\]
arises from a global coefficient-log array if and only if, for every
$\gamma\in\Delta_3^{d-3}$,
\[
r_{\gamma+e_3,1}
-r_{\gamma+e_2,1}
+r_{\gamma+e_1,3}
-r_{\gamma+e_2,3}
=0
\]
and
\[
r_{\gamma+e_3,2}
-r_{\gamma+e_1,2}
+r_{\gamma+e_2,3}
-r_{\gamma+e_1,3}
=0.
\]
These $(d-1)(d-2)$ relations are linearly independent.
\end{theirtheorem}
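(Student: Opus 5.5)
The plan is to treat the passage from coefficient-log arrays to rhombus logarithms as a linear map and to finish by counting dimensions. Write $x=(x_\alpha)_{\alpha\in\Delta_3^d}$ with $x_\alpha=\log c_\alpha$. Let $\Phi:\R^{\Delta_3^d}\to\R^{\Delta_3^{d-2}\times[3]}$ be defined by
\[
\Phi(x)_{\beta,i}=x_{\beta+2e_i}+x_{\beta+e_j+e_k}-x_{\beta+e_i+e_j}-x_{\beta+e_i+e_k}.
\]
The arrays $r$ arising from a global coefficient-log array are exactly the elements of $\im\Phi$. Let $W$ be the subspace cut out by the $(d-1)(d-2)$ displayed relations. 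I will prove three things:
\begin{enumerate}[label=(\roman*)]
\item $\im\Phi\subseteq W$;
\item the relations are linearly independent, so $\dim W=3\binom d2-(d-1)(d-2)$;
\item $\dim\ker\Phi=3$.
\end{enumerate}
Granting these, $\dim\im\Phi=\binom{d+2}2-3$. A direct computation gives
\[
3\binom d2-\binom{d+2}2+3=d^2-3d+2=(d-1)(d-2),
\]
so $\dim\im\Phi=\dim W$, and (i) upgrades to $\im\Phi=W$.

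Step (i) is a routine substitution. Expand each of the four terms, for instance
\[
r_{\gamma+e_3,1}=x_{\gamma+2e_1+e_3}+x_{\gamma+e_2+2e_3}-x_{\gamma+e_1+e_2+e_3}-x_{\gamma+e_1+2e_3},
\]
and check that the sixteen monomials cancel in pairs. The second relation is the image of the first under the transposition $e_1\leftrightarrow e_2$, so only one check is needed.

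For step (ii) I would use a triangularity argument. The first relation at $\gamma$ involves the variables $r_{\cdot,1}$ only at $\gamma+e_3$ and $\gamma+e_2$. Among these, the unique one with the largest third coordinate is $r_{\gamma+e_3,1}$. It cannot occur in the first relation for any other $\gamma'$: either $\gamma'+e_3=\gamma+e_3$, forcing $\gamma'=\gamma$, or $\gamma'+e_2=\gamma+e_3$, in which case $r_{\gamma+e_3,1}$ is not the leading term of that relation. Order the first family by decreasing third coordinate of $\gamma$. The leading variables $r_{\gamma+e_3,1}$ are then pairwise distinct, and the coefficient matrix is triangular. The second family works the same way with leading variables $r_{\gamma+e_3,2}$. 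These carry index $i=2$ and so never appear in the first family, and they do not appear in the $r_{\cdot,3}$-terms either. Hence the whole system is triangular with respect to these leading variables, and the relations are independent.

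Step (iii) is where the real content lies, and I expect it to be the main obstacle. The kernel contains the restrictions of linear functions $\alpha\mapsto\langle b,\alpha\rangle$, $b\in\R^3$, and these are distinct on $\Delta_3^d$ because the simplex spans $\R^3$ (constants are absorbed, since $\sum\alpha_i=d$). So $\dim\ker\Phi\ge3$. For the reverse inequality, take $x\in\ker\Phi$ and subtract the linear function agreeing with $x$ at $de_1$, $(d-1)e_1+e_2$, $(d-1)e_1+e_3$. I will then show the difference $y$ vanishes everywhere, by induction on the first coordinate $\alpha_1$ downward from $d$. Suppose $y$ vanishes on layers $\alpha_1=m+1$ and $\alpha_1=m+2$. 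The relation $r_{\beta,1}=0$ with $\beta_1=m$ expresses
\[
y_{\beta+e_2+e_3}=y_{\beta+e_1+e_2}+y_{\beta+e_1+e_3}-y_{\beta+2e_1},
\]
which pushes vanishing into the interior of layer $\alpha_1=m$. The boundary values of that layer, at $\beta+2e_2$ and $\beta+2e_3$, are handled by the relations $r_{\beta,2}=0$ and $r_{\beta,3}=0$, since each of these involves only one point of layer $m$ apart from points already known to vanish. Starting this induction requires both layers $\alpha_1=d$ and $\alpha_1=d-1$ to vanish, which is exactly what the normalization at the three chosen points provides. I would carry out the bookkeeping carefully at the two edges of each layer, because that is where the argument could break.
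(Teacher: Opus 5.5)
Your proof is correct, and it takes a different route from the paper's. The paper does not argue by verification plus a dimension count. It identifies $g_{\beta,i}$ with $\mathbf x^\beta Q_i$, where $Q_i=(x_i-x_j)(x_i-x_k)$. Then $L_d^\perp=\ker\Psi_d$ is the degree-$(d-2)$ part of the syzygy module of $(Q_1,Q_2,Q_3)$. Writing $Q_1=u(u+v)$, $Q_2=-uv$, $Q_3=v(u+v)$ with $u=x_1-x_2$, $v=x_2-x_3$, divisibility by the primes $u$ and $v$ shows this module is free on two linear syzygies. Their $\mathbf x^\gamma$-multiples are exactly the rows of $\kappa_d$.

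That one computation gives spanning and independence at once and explains where the relations come from. It also avoids $\dim\ker\rho_d$ entirely: the paper uses Lemma~\ref{lem:kernel-rho} only for exactness at $\R^{\Delta_3^d}$.

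Your route checks $\im\Phi\subseteq W$, gets independence from distinct leading variables, and closes with $\dim\ker\Phi=3$. It is pure linear algebra, but it needs the relations handed to it in advance. It also rests on the kernel computation, which is the content of the paper's Lemma~\ref{lem:kernel-rho} and Corollary~\ref{cor:dual-dimension}. Your layer-by-layer induction is a valid substitute for the paper's constant-first-difference argument.

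The edge bookkeeping you were worried about does work. For $\beta=(m,d-m-2,0)$, the relation $r_{\beta,2}=0$ involves $(m,d-m,0)$, the already-handled interior point $(m,d-m-1,1)$, and two points of layer $m+1$. The case $r_{\beta,3}=0$ with $\beta=(m,0,d-m-2)$ is symmetric. Both choices of $\beta$ exist exactly when $m\le d-2$, which is the range of your induction.

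One wording fix in step (ii). Your sentence says $r_{\gamma+e_3,1}$ ``cannot occur'' in another first-family relation, but it does: it appears in the relation at $\gamma+e_3-e_2$ (when $\gamma_2\ge1$), as that relation's non-leading term. What triangularity actually needs is that this relation has larger third coordinate, so it precedes $\gamma$ in your ordering.
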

\medskip

This is Theorem~\ref{thm:explicit-compatibility}. In particular, the abstract
compatibility constraint in Theorem~\ref{thm:general-optimal} becomes
completely explicit for full support in every degree.

In degree three, only two compatibility parameters
remain. The resulting two-variable formula
(Theorem~\ref{thm:cubic-optimal}), together with explicit degenerations,
determines the sharp constant on every cone generated by two local ratios
(Theorem~\ref{thm:cubic-two-generator}).

Finally, we compare these universal bounds with two structured classes.
Volume polynomials attain one Lorentzian optimum, whereas another functional
has a strictly smaller mixed-volume optimum. Three-color basis profiles of
rank-three matroids satisfy a still stronger inequality by the Rayleigh
property. Thus geometric and combinatorial realizability can impose
restrictions that Lorentzianity alone does not detect.

Section~\ref{sec:preliminaries} introduces the notation. Section~\ref{sec:the-bounded-ratio} determines the bounded-ratio
cone and its geometry. Section~\ref{sec:optimal-bounding-constants} derives the variational formula for sharp
constants, and Section~\ref{sec:compatibility-constraints} makes its compatibility constraints explicit.
Section~\ref{sec:cubic} treats the full-support cubic case. Section~\ref{sec:applications} gives the geometric
and combinatorial applications. 

\medskip
\noindent\textbf{Concurrent work.}\enspace
After completing this manuscript and while awaiting comments on the draft,
Baldi and Kummer~\cite{BaldiKummer2026} posted a closely related preprint. They prove a general tropical characterization of bounded ratios on semialgebraic sets and, in the Lorentzian setting, identify the bounded-ratio cone for a fixed $M$-convex support with the dual cone of $M$-convex functions. In the ternary setting, combined with the local exchange characterization of $M$-convex functions, their result gives an alternative route to the characterization of the bounded-ratio cone in Theorem~\ref{thm:local-global}. Independently, Bathija, Rohatgi, and Soskin~\cite{BRS} obtain a general dual characterization for full-support Lorentzian polynomials. 
In the ternary full-support setting, their results overlap substantially with ours: they likewise show that the bounded-ratio cone is generated by the triangular ratios, identify these rays as the extreme rays of the cone, and determine the optimal bounding constant of every bounded ratio. The two works were carried out independently. Their work further develops the full-support theory in arbitrary dimension and studies bounded ratios for products of linear forms. Our results treat arbitrary ternary M-convex supports, determine the compatibility relations explicitly in every degree, classify all two-generator sections in the cubic case, and compare the Lorentzian bounds with geometric and combinatorial realizability.

\medskip
\noindent\textbf{Acknowledgments.}\enspace
We thank Professor Botong Wang for organizing the Spring 2026 reading group in which we learned the theory of Lorentzian polynomials and for suggesting this problem. We are also grateful to him for being the first faculty member to
read a draft of this paper and for his guidance and encouragement from the beginning of the project.

We also thank Daniel Soskin for giving us brief but important suggestions and observations on some parts of the paper.

During the development and preparation of this work, the authors used ChatGPT 5.6 for doing a lot of calculations, and assisting with the editing and revision. AI-generated outputs were treated as suggestions instead of authoritative results. All of the mathematical arguments were independently verified by the authors, who take full responsibility for the content of this paper.

\section{Preliminaries}\label{sec:preliminaries}

Throughout, we write $\N:=\mathbb Z_{\ge0}$. For
$\alpha=(\alpha_1,\ldots,\alpha_n)\in\N^n$, set
$|\alpha|:=\alpha_1+\cdots+\alpha_n$. For integers $n\ge1$ and $d\ge0$,
define
\[
[n]:=\{1,\ldots,n\},\qquad
\Delta_n^d:=\{\alpha\in\N^n:|\alpha|=d\},
\]
and let $e_1,\ldots,e_n$ be the
standard basis of $\mathbb R^n$. For $\alpha\in\N^n$, write
\[
\alpha!:=\alpha_1!\cdots\alpha_n!,\qquad
\mathbf x^\alpha:=x_1^{\alpha_1}\cdots x_n^{\alpha_n},\qquad
\partial^\alpha:=\partial_1^{\alpha_1}\cdots\partial_n^{\alpha_n},
\]
where $\partial_i=\partial/\partial x_i$. Every homogeneous polynomial in
this paper is written in factorial normalization:
\[
f(\mathbf x)=\sum_{\alpha\in S}c_\alpha\frac{\mathbf x^\alpha}{\alpha!},
\qquad
S\subseteq\Delta_n^d,\quad c_\alpha>0.
\]

\begin{definition}[$M$-convex set]
A nonempty subset $J\subseteq\Delta_n^d$ is \emph{$M$-convex} if, for every
$\alpha,\beta\in J$ and every $i$ with $\alpha_i>\beta_i$, there exists $j$
with $\alpha_j<\beta_j$ such that
\[
\alpha-e_i+e_j\in J,\qquad \beta-e_j+e_i\in J.
\]
For quadratic slice supports we also allow the empty set by convention.
\end{definition}

This symmetric exchange condition is a standard characterization of
$M$-convexity. See \cite{Murota} and \cite[Section~2.1]{BH}.

\begin{definition}[Lorentzian matrix]
A real symmetric matrix is called \emph{Lorentzian} if it has at most one
positive eigenvalue.
\end{definition}

The convention in \cite{HHSW} also requires nonnegative entries. All Hessian
slices considered here have nonnegative entries, so the two conventions agree
in our setting.

For $d\ge2$ and $\beta\in\Delta_n^{d-2}$, the \emph{quadratic Hessian
slice} of $f$ at $\beta$ is
\[
H_\beta(f):=\bigl(c_{\beta+e_i+e_j}\bigr)_{1\le i,j\le n},
\]
where $c_\gamma:=0$ for $\gamma\notin S$. By factorial normalization,
$H_\beta(f)$ is exactly the Hessian matrix of the quadratic polynomial
$\partial^\beta f$.

By \cite[Definition~2.6 and Theorem~2.25]{BH}, a degree-$d$ homogeneous
polynomial with nonnegative coefficients is Lorentzian if and only if its
support is $M$-convex and every $H_\beta(f)$ is Lorentzian. For an $M$-convex
set $S\subseteq\Delta_n^d$, write
$\cL_S^+:=\{f:f\text{ is Lorentzian and }\operatorname{supp}(f)=S\}$.

\begin{definition}[Bounded-ratio cone]
Let $\mathcal X$ be a nonempty class of degree-$d$ Lorentzian polynomials
with common support $S$. For $w=(w_\alpha)_{\alpha\in S}\in\mathbb R^S$,
set $R_w(f):=\prod_{\alpha\in S}c_\alpha^{w_\alpha}$.
The \emph{bounded-ratio cone} of $\mathcal X$ is
\[
\BR(\mathcal X):=\{w\in\mathbb R^S:\sup_{f\in\mathcal X}R_w(f)<\infty\}.
\]
For $w\in\BR(\mathcal X)$, its \emph{optimal bounding constant} is
$B_{\mathcal X}(w):=\sup_{f\in\mathcal X}R_w(f)$.
\end{definition}

Because $R_{au+bv}(f)=R_u(f)^aR_v(f)^b$ for $a,b\ge0$, the set
$\BR(\mathcal X)$ is a convex cone. We adopt the convention
$\Cone(\varnothing)=\{0\}$.

\section{The bounded-ratio cone}\label{sec:the-bounded-ratio}

\subsection{Local bounded ratios}

For quadratic supports, we abbreviate
$11=2e_1$, $22=2e_2$, $33=2e_3$, and
$12=e_1+e_2$, $13=e_1+e_3$, $23=e_2+e_3$.
We identify supports that differ by the natural action of $S_3$, and we let
the empty support represent the zero quadratic.

\begin{lemma}[Quadratic supports]
\label{lem:quadratic-supports}
Up to the natural $S_3$-symmetry, the $M$-convex subsets of
$\Delta_3^2$ are exactly the following eleven supports:
\[
\begin{gathered}
\emptyset,\
\{11\},\
\{12\},\
\{11,12\},\
\{12,13\},\
\{11,12,13\},\
\{11,12,22\},\
\{12,13,23\},\\
\{11,12,13,23\},\
\{11,12,13,22,23\},\
\{11,12,13,22,23,33\}.
\end{gathered}
\]
\end{lemma}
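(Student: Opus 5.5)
We prove the classification by direct enumeration. The ambient set $\Delta_3^2$ has six points: the three vertices $11,22,33$ and the three edge midpoints $12,13,23$. We view it as a triangular grid in which $ii$ is adjacent to $ij$ and $ik$, and the midpoints $12,13,23$ are pairwise adjacent. The useful reformulation is the standard fact that a set $J\subseteq\Delta_n^d$ is $M$-convex exactly when it is the set of lattice points of an integral generalized permutohedron. In the plane $\Delta_3^2$ this means a lattice polygon (possibly degenerate) whose edges are parallel to the root directions $e_i-e_j$. Rather than lean on that fact, we verify the exchange axiom directly. There are only $2^6=64$ subsets, and the $S_3$-action cuts this to a handful of orbits per cardinality.

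\textbf{Necessary conditions.} First we record two consequences of the exchange condition.

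(a) If two distinct vertices $ii$ and $jj$ lie in $J$, then $ij\in J$. Indeed, apply the condition with $\alpha=2e_i$, $\beta=2e_j$ and index $i$. The only $j'$ with $\alpha_{j'}<\beta_{j'}$ is $j$, so $\alpha-e_i+e_j=e_i+e_j$ must lie in $J$.

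(b) If $ii\in J$ and $jk\in J$ with $\{i,j,k\}=\{1,2,3\}$, then the exchange forces $ij\in J$ or $ik\in J$. It also forces the corresponding partner $jk+e_i-e_j=ik$ or $ij$, which is consistent with this.

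We also note that any two distinct midpoints differ by a root $e_i-e_j$, so every set of midpoints is exchange-compatible among itself.

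\textbf{Enumeration by cardinality.} With these conditions we sort the subsets by size.
\begin{itemize}
\item[$|J|\le1$.] The sets are $\emptyset$, $\{11\}$ and $\{12\}$.
\item[$|J|=2$.] The pairs are vertex–vertex (excluded by (a)), adjacent vertex–midpoint $\{11,12\}$, opposite vertex–midpoint $\{11,23\}$ (excluded by (b)), and midpoint–midpoint $\{12,13\}$.
\item[$|J|=3$.] One checks the orbits: three midpoints $\{12,13,23\}$; one vertex with its two adjacent midpoints $\{11,12,13\}$; two vertices with their midpoint $\{11,12,22\}$; and the remaining orbits, each killed by (a) or (b). An example is $\{11,12,23\}$, which survives (b) via $12$, so it must be checked directly. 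There the pair $11$, $23$ with $i=1$ requires $j\in\{2,3\}$ with $11-e_1+e_j\in J$ and $23-e_j+e_1\in J$. Taking $j=2$ gives $12\in J$ and $13\notin J$. Taking $j=3$ gives $13\notin J$. So the set fails.
\item[$|J|\ge4$.] We continue in the same way. Using (a) and (b), the survivors are $\{11,12,13,23\}$, $\{11,12,13,22,23\}$ and the full set.
\end{itemize}
The sets that survive form exactly the list of eleven supports.

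\textbf{Sufficiency.} Finally, for each of the eleven listed sets we verify the symmetric exchange axiom. This is quickest through the polygon picture. The sets are respectively: empty, a point, a point, a unit root segment, a unit root segment, a unit triangle, a length-2 segment, the inverted unit triangle, a trapezoid, a trapezoid, and the full triangle. Each of these is a generalized permutohedron. Alternatively, one can check finitely many pairs by hand.

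The main obstacle is bookkeeping in the $|J|=3,4$ cases. There, conditions (a) and (b) are not by themselves sufficient: as the example $\{11,12,23\}$ shows, one must apply the full two-sided exchange and not only the one-sided condition. I would organize that step by listing $S_3$-orbit representatives by how many vertices the set contains (0 through 3) before checking the exchange condition.
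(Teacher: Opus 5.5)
Your strategy of deriving forcing rules from the exchange axiom and then enumerating orbits, ideally by number of vertices, is the paper's strategy. However, your rule (b) is recorded too weakly, and the $|J|\ge4$ step that rests on it does not go through. You state (b) as ``$ij\in J$ or $ik\in J$''. Your remark that $\{11,12,23\}$ ``survives (b) via $12$'' shows you are using only this disjunction. Under that reading, the claim that (a) and (b) leave only $\{11,12,13,23\}$, $\{11,12,13,22,23\}$ and the full set among $|J|\ge4$ is false. Take $J=\{11,12,13,22\}$. It satisfies (a) because $12\in J$. It satisfies weak (b) because its only vertex/opposite-midpoint pair is $22,13$ and $12\in J$. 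Yet it is not $M$-convex: for $\alpha=2e_2$, $\beta=e_1+e_3$, $i=2$, the choice $j=1$ needs $\beta-e_1+e_2=23\in J$, and $j=3$ needs $\alpha-e_2+e_3=23\in J$. You never actually carry out the $|J|\ge4$ enumeration (``we continue in the same way''), and your closing paragraph itself concedes that (a) and (b) are insufficient. So this case is unproved as written.

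The missing step is already in your computation for (b). Whichever $j'\in\{j,k\}$ realizes the exchange, both $\alpha-e_i+e_{j'}$ and $\beta-e_{j'}+e_i$ must lie in $J$, and these are $ij$ and $ik$ in some order. So (b) forces \emph{both} $ij$ and $ik$; this is the paper's second forcing rule. With it, $\{11,12,23\}$ and $\{11,12,13,22\}$ die at once and the bookkeeping obstacle disappears. The count by number of vertices then takes a few lines: two vertices $11,22$ force $12$ and make $13,23$ occur together, and three vertices force everything.

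The strong rule also gives sufficiency. Every pair of points of $\Delta_3^2$ either differs by a single root $e_a-e_b$ or is a vertex--vertex or vertex--opposite-midpoint pair. In the first case exchange holds trivially, as you noted for two midpoints; the same is true for a vertex and an adjacent midpoint. Hence (a) together with strong (b) is \emph{equivalent} to $M$-convexity on $\Delta_3^2$. This settles sufficiency for the eleven sets directly, without the generalized-permutohedron picture. (Minor: $\{11,12,13,23\}$ is a parallelogram, not a trapezoid.)
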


\begin{proof}
The symmetric exchange axiom yields two forcing rules. If $ii,jj\in T$, then
$ij\in T$. If $ii,jk\in T$ and $\{i,j,k\}=\{1,2,3\}$, then
$ij,ik\in T$.

We classify $T$ by the number of diagonal points it contains. With no
diagonal point, the nonempty possibilities are $\{12\}$, $\{12,13\}$, and
$\{12,13,23\}$. With one diagonal point, say $11$, the absence of $23$
leaves $\{11\}$, $\{11,12\}$, and $\{11,12,13\}$ up to symmetry. If
$23\in T$, the second forcing rule gives the support
$\{11,12,13,23\}$. With two diagonal points, say $11$ and $22$, the first
rule gives $12\in T$, while the exchange axiom makes $13$ and $23$ occur
together. This produces $\{11,12,22\}$ and
$\{11,12,13,22,23\}$. Three diagonal points force full support. Adding the
empty set gives the list, and each listed support satisfies the exchange
axiom.
\end{proof}

For a quadratic support $T\subseteq\Delta_3^2$, let
$P=(p_{ij})_{1\le i,j\le3}$ be symmetric, positive on $T$, and zero off
$T$. By factorial normalization, $P$ is the Hessian of the corresponding
quadratic polynomial. For $w\in\mathbb R^T$, write
$R_w(P):=\prod_{\alpha\in T}p_\alpha^{w_\alpha}$.

\begin{lemma}[Interlacing--determinant criterion]
\label{lem:interlacing-det-test}
Let $A$ be a real symmetric $3\times3$ matrix. Suppose that one of its
$2\times2$ principal minors is negative. Then $A$ is Lorentzian if and
only if $\det A\ge0$.
\end{lemma}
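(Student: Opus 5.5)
The plan is to use Cauchy interlacing together with the sign of the determinant. Let $\mu_1\ge\mu_2\ge\mu_3$ be the eigenvalues of $A$. Suppose the principal $2\times2$ submatrix $B$ has negative determinant, with eigenvalues $\nu_1\ge\nu_2$. Since $\det B=\nu_1\nu_2<0$, we get $\nu_1>0>\nu_2$. Cauchy interlacing for a principal submatrix of codimension one gives
\[
\mu_1\ge\nu_1\ge\mu_2\ge\nu_2\ge\mu_3 .
\]
Consequently $\mu_1\ge\nu_1>0$ and $\mu_3\le\nu_2<0$. So $A$ always has at least one positive and at least one negative eigenvalue, and only the sign of $\mu_2$ is undetermined.

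For the forward direction, assume $A$ is Lorentzian, i.e.\ it has at most one positive eigenvalue. Since $\mu_1>0$, we must have $\mu_2\le0$. Then
\[
\det A=\mu_1\mu_2\mu_3
\]
is the product of a positive number, a nonpositive number and a negative number, hence $\det A\ge0$.

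For the converse, assume $\det A\ge0$. If $\mu_2>0$, then $\mu_1\mu_2>0$ and $\mu_3<0$, so $\det A<0$, a contradiction. Hence $\mu_2\le0$, and therefore $\mu_3\le\mu_2\le0$. Thus $\mu_1$ is the only positive eigenvalue, and $A$ is Lorentzian.

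There is no serious obstacle in this argument. The only point needing care is that interlacing forces the strict inequalities $\mu_1>0$ and $\mu_3<0$. This follows from $\nu_1>0>\nu_2$, which in turn comes directly from the negativity of the $2\times2$ minor. The degenerate case $\det A=0$ with $\mu_2=0$ is covered, because the definition allows at most one positive eigenvalue, not exactly one.
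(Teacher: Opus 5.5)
Your proposal is correct and follows essentially the same route as the paper: the negative $2\times2$ minor gives the submatrix eigenvalues opposite signs, and Cauchy interlacing then forces $\mu_1>0>\mu_3$. Lorentzianity therefore reduces to $\mu_2\le0$, which is equivalent to $\det A=\mu_1\mu_2\mu_3\ge0$.
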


\begin{proof}
Let $\mu_1>0>\mu_2$ be the eigenvalues of the indicated principal
submatrix, and let $\lambda_1\ge\lambda_2\ge\lambda_3$ be those of $A$.
Cauchy interlacing gives
\[
\lambda_1\ge\mu_1\ge\lambda_2\ge\mu_2\ge\lambda_3.
\]
Thus $\lambda_1>0>\lambda_3$, so $A$ is Lorentzian exactly when
$\lambda_2\le0$. Because $\det A=\lambda_1\lambda_2\lambda_3$, this is
equivalent to $\det A\ge0$.
\end{proof}

On every support containing the required entries, define
\[
U:=\frac{p_{11}p_{22}}{p_{12}^2},\qquad
X:=\frac{p_{11}p_{23}}{p_{12}p_{13}},\qquad
Y:=\frac{p_{22}p_{13}}{p_{12}p_{23}},\qquad
Z:=\frac{p_{33}p_{12}}{p_{13}p_{23}}.
\]
Their exponent vectors in $\mathbb R^T$ are denoted by $u,x,y,z$,
respectively. Thus $R_x(P)=X$ whenever $X$ is defined on $T$, and similarly
for the other three ratios.

\begin{lemma}[Local Lorentzian tests]
\label{lem:local-lorentzian-tests}
Let $T\subseteq\Delta_3^2$ be a quadratic $M$-convex support, and let
$P$ be a symmetric matrix that is positive on $T$ and zero off $T$.
Up to a permutation of the indices, the Lorentzianity of $P$ is
characterized as follows.

\begin{enumerate}[label=(\roman*)]
\item If $T$ is one of
\[
\emptyset,\
\{11\},\
\{12\},\
\{11,12\},\
\{12,13\},\
\{11,12,13\},\
\{12,13,23\},
\]
then $P$ is Lorentzian.

\item If $T=\{11,12,22\}$, then $P$ is Lorentzian if and only if
$U\le1$.

\item If $T=\{11,12,13,23\}$, then $P$ is Lorentzian if and only if
$X\le2$.

\item If $T=\{11,12,13,22,23\}$, then $P$ is Lorentzian if and only if
$X+Y\le2$.

\item If $T=\Delta_3^2$, then $P$ is Lorentzian if and only if
\[
XY\le1,\qquad XZ\le1,\qquad YZ\le1,
\]
and
\[
XYZ+2-X-Y-Z\ge0.
\]
\end{enumerate}
\end{lemma}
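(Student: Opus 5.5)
The plan is to go through the supports of Lemma~\ref{lem:quadratic-supports} one at a time. The tools are Lemma~\ref{lem:interlacing-det-test}, explicit determinant formulas, and a rescaling step: replacing $P$ by $DPD$ with $D$ a positive diagonal matrix changes neither Lorentzianity (Sylvester's law of inertia) nor the ratios $U,X,Y,Z$, which are invariant under this scaling. After scaling I may assume that the off-diagonal entries in the relevant rows equal $1$ wherever possible, so the determinant becomes a polynomial in $U,X,Y,Z$.

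\textbf{Case (i).} These supports contain at most one diagonal entry. For a symmetric matrix, a principal submatrix with a single zero diagonal entry can be chosen: either some $2\times2$ principal minor $\begin{pmatrix}0&a\\a&*\end{pmatrix}$ with $a>0$ exists, or the nonzero part has rank at most one. For $\{11\}$, $\{12\}$, $\{11,12\}$, $\{12,13\}$ and $\{11,12,13\}$, the matrix is supported on row and column $1$, so it has the form $P=\begin{pmatrix}a&b^T\\ b&0\end{pmatrix}$. This has rank at most $2$, and when it has rank exactly $2$ its signature is $(1,1)$ since $\det$ of the $2\times 2$ block is negative. Hence it has at most one positive eigenvalue.

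For $\{12,13,23\}$, the diagonal is zero, so the trace is zero. After scaling, $\det P=2p_{12}p_{13}p_{23}>0$. Eigenvalues with zero sum and positive product must have signs $(+,-,-)$, so $P$ is Lorentzian. Alternatively, apply Lemma~\ref{lem:interlacing-det-test} using the minor $-p_{12}^2<0$.

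\textbf{Cases (ii)--(iv).} For $\{11,12,22\}$, the matrix is the direct sum of a $2\times2$ block and a zero. It is Lorentzian iff that block has nonpositive determinant, i.e.\ $p_{11}p_{22}\le p_{12}^2$, which is $U\le1$.

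For (iii) and (iv), $p_{33}=0$, so the minor on indices $\{1,3\}$ equals $-p_{13}^2<0$, and Lemma~\ref{lem:interlacing-det-test} reduces the question to $\det P\ge0$. Scale so that $p_{12}=p_{13}=p_{23}=1$; this is possible since these three entries are positive. Then $X=p_{11}$ and $Y=p_{22}$, and expanding gives
\[
\det P=\det\begin{pmatrix}X&1&1\\1&Y&1\\1&1&0\end{pmatrix}=2-X-Y.
\]
Setting $Y=0$ gives case (iii), namely $X\le2$.

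\textbf{Case (v).} This is the main obstacle, since no principal minor is a priori negative. After the same scaling the matrix is
\[
P=\begin{pmatrix}X&1&1\\1&Y&1\\1&1&Z\end{pmatrix},\qquad \det P=XYZ+2-X-Y-Z.
\]

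Suppose first that the stated inequalities hold. If some $2\times2$ minor is negative, say $XY<1$, then Lemma~\ref{lem:interlacing-det-test} applies and $\det P\ge0$ gives Lorentzianity. Otherwise $XY=XZ=YZ=1$. Positivity then forces $X=Y=Z=1$, so $P$ is the all-ones matrix, which has rank one and is Lorentzian.

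Conversely, suppose $P$ is Lorentzian. Each $2\times2$ principal submatrix has at most one positive eigenvalue by interlacing, and it has a positive diagonal, so its determinant is $\le0$. This gives $XY\le1$, $XZ\le1$ and $YZ\le1$. If some of these inequalities is strict, Lemma~\ref{lem:interlacing-det-test} gives $\det P\ge0$. If all are equalities, then $X=Y=Z=1$ and $\det P=0$. In either case the determinant condition holds, which completes the proof.
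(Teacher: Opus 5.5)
Your proof is correct and follows essentially the same route as the paper: a case-by-case check using the interlacing--determinant criterion (Lemma~\ref{lem:interlacing-det-test}) and explicit determinant formulas, with the rank-one all-ones matrix handling the equality case in (v). The diagonal congruence $P\mapsto DPD$ that normalizes $p_{12}=p_{13}=p_{23}=1$ is a tidier way of writing the paper's factorization $\det P=p_{12}p_{13}p_{23}(\cdots)$, and your rank and trace arguments in (i) and the case split in the converse of (v) are minor variants of the paper's eigenvalue-sign reasoning.
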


\begin{proof}
The empty support and $\{11\}$ are immediate. Every other support in (i)
has a negative $2\times2$ principal minor. Its determinant vanishes unless
$T=\{12,13,23\}$, for which
$\det P=2p_{12}p_{13}p_{23}>0$.
Lemma~\ref{lem:interlacing-det-test} proves (i).

For (ii), the nonzero $2\times2$ block has determinant
$p_{12}^2(U-1)$. In (iii) and (iv), respectively,
\[
\det P=p_{12}p_{13}p_{23}(2-X),
\qquad
\det P=p_{12}p_{13}p_{23}(2-X-Y).
\]
Both matrices have a negative principal minor. The three asserted criteria
now follow from Lemma~\ref{lem:interlacing-det-test}.

Suppose finally that $T$ has full support. The three $2\times2$
principal minors are nonpositive precisely when $XY,XZ,YZ\le1$, and
\[
\frac{\det P}{p_{12}p_{13}p_{23}}
=
XYZ+2-X-Y-Z.
\]
If $P$ is Lorentzian, interlacing gives the three principal-minor
inequalities. Since the positive entries ensure a positive eigenvalue, the
other two eigenvalues are nonpositive and $\det P\ge0$.

Conversely, if one principal minor is negative, the determinant inequality
and Lemma~\ref{lem:interlacing-det-test} show that $P$ is Lorentzian.
If none is negative, all three vanish. Positivity then makes $P$ a rank-one
positive semidefinite matrix, which is again Lorentzian.
\end{proof}

The preceding feasible-region description determines the local bounded-ratio
cones after passing to logarithmic coordinates.

\begin{corollary}[Local bounded-ratio cones]
\label{cor:local-cones}
With the cases and notation of Lemma~\ref{lem:local-lorentzian-tests}, the
cone $\BR(\cL_T^+)$ is, respectively,
\[
\{0\},\
\Cone\{u\},\
\Cone\{x\},\
\Cone\{x,y\},\
\Cone\{x,y,z\}.
\]
\end{corollary}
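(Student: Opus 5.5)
The plan is to pass to logarithmic coordinates and to trap each feasible region between two translates of one polyhedral cone. Put $\ell_\alpha=\log p_\alpha$ for $\alpha\in T$. Then $\log R_w(P)=\langle w,\ell\rangle$, and $\BR(\cL_T^+)$ is the set of $w$ for which $\langle w,\ell\rangle$ is bounded above on the feasible region
\[
F_T=\{\ell\in\mathbb R^T : P \text{ is Lorentzian}\}.
\]
Lemma~\ref{lem:local-lorentzian-tests} describes $F_T$ case by case. The one general tool is an elementary Farkas-type fact. Suppose $a_1,\dots,a_k\in\mathbb R^T$ are linearly independent and $c\in\mathbb R^k$. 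Then $\langle w,\cdot\rangle$ is bounded above on $\{\ell:\langle a_i,\ell\rangle\le c_i\ \forall i\}$ if and only if $w\in\Cone\{a_1,\dots,a_k\}$. The reason is that this set is a translate of the polyhedral cone $\{\langle a_i,\ell\rangle\le0\}$, whose polar is $\Cone\{a_i\}$. The fact applies to any $F_T$ sandwiched between two such sets with the same $a_i$: containment in the outer set gives boundedness for $w$ in the cone, and containment of the inner set gives unboundedness for $w$ outside it.

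For case (i), Lemma~\ref{lem:local-lorentzian-tests} gives $F_T=\mathbb R^T$, so only $w=0$ is bounded. In cases (ii) and (iii), $F_T$ is itself a half-space, namely $\langle u,\ell\rangle\le0$ or $\langle x,\ell\rangle\le\log2$. The fact with $k=1$ then gives $\Cone\{u\}$ and $\Cone\{x\}$, since $u$ and $x$ are nonzero.

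In case (iv) the region is $e^{\langle x,\ell\rangle}+e^{\langle y,\ell\rangle}\le2$. It contains $\{\langle x,\ell\rangle\le0,\ \langle y,\ell\rangle\le0\}$ and is contained in $\{\langle x,\ell\rangle\le\log2,\ \langle y,\ell\rangle\le\log2\}$. The vectors $x,y$ are linearly independent, because $x$ involves $p_{11}$ and $y$ does not. Hence the bounded-ratio cone is $\Cone\{x,y\}$.

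Case (v) is the step needing actual inequalities; I expect it to be the main obstacle, since $F_T$ need not be convex.

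\emph{Inner set.} I will show that $X,Y,Z\le1$ implies Lorentzian. The products $XY,XZ,YZ$ are then at most $1$. The function $XYZ+2-X-Y-Z$ is multilinear, so its minimum on $[0,1]^3$ is attained at a vertex. Its vertex values are $2,1,0,0$, so it is nonnegative there.

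\emph{Outer set.} I will show that Lorentzian forces $X,Y,Z\le2$. By symmetry it suffices to bound $X$. If $YZ<1$, the determinant inequality gives $X(1-YZ)\le2-Y-Z$. Moreover $Y+Z\ge2\sqrt{YZ}\ge2YZ$ because $YZ\le1$. Together these give $2-Y-Z\le2(1-YZ)$, hence $X\le2$. If $YZ=1$, the determinant inequality reads $X(YZ-1)+2-Y-Z\ge0$, that is, $2-Y-Z\ge0$. Since $Y+Z\ge2\sqrt{YZ}=2$, this forces $Y=Z=1$. Then $XY\le1$ gives $X\le1$.

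Finally I check that $x,y,z$ are linearly independent: each is the only one of the three involving $p_{11}$, $p_{22}$, or $p_{33}$ respectively. The fact with $k=3$ then gives $\Cone\{x,y,z\}$.
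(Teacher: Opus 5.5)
Your proof is correct and is essentially the paper's argument. The upper bounds on the generators give $\Cone(G)\subseteq\BR(\cL_T^+)$, and the implication ``all generator ratios $\le 1$ $\Rightarrow$ Lorentzian'' combined with polarity gives the reverse inclusion; the paper phrases this as a separating hyperplane plus the ray $p_\alpha(t)=e^{tq_\alpha}$. The differences are cosmetic: you check $XYZ+2-X-Y-Z\ge 0$ on $[0,1]^3$ by multilinearity where the paper uses $(1-X)(1-Y)+(1-Z)(1-XY)$, and you get $X\le 2$ via AM--GM where the paper obtains $X+Z\le 2$ directly from $XZ\le 1$. Also, the linear-independence hypothesis in your Farkas-type fact is not actually needed here, and $F_T$ is in fact convex by Lemma~\ref{lem:row-convex}, though your argument does not use this.
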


\begin{proof}
In case (i), the logarithmic Lorentzian locus is $\mathbb R^T$, on which only
the zero linear functional is bounded above. The inequalities in
Lemma~\ref{lem:local-lorentzian-tests} bound the listed generators in cases
(ii)--(iv). For full support, the inequalities $XZ\le1$ and
$XYZ+2-X-Y-Z\ge0$ imply
\[
0\le XYZ+2-X-Y-Z\le2-X-Z.
\]
Hence $X+Z\le2$, and cyclic symmetry gives $X+Y,Y+Z\le2$. The cone displayed
in the statement is therefore contained in $\BR(\cL_T^+)$.

For the reverse inclusion, consider a support from cases (ii)--(v), and let
$G$ be its listed set of generators. If $w\notin\Cone(G)$, the separating
hyperplane theorem gives $q\in\mathbb R^T$ such that
$\langle g,q\rangle\le0$ for every $g\in G$, but
$\langle w,q\rangle>0$. For $t\ge0$, set
$p_\alpha(t)=e^{tq_\alpha}$ on $T$ and set all other entries to zero. Every
generator ratio of $P(t)$ is at most $1$, so
Lemma~\ref{lem:local-lorentzian-tests} proves Lorentzianity in cases
(ii)--(iv). In case (v), the product inequalities hold, and
\[
XYZ+2-X-Y-Z
=
(1-X)(1-Y)+(1-Z)(1-XY)\ge0.
\]
This factorization proves Lorentzianity in the remaining case. Meanwhile,
$R_w(P(t))=e^{t\langle w,q\rangle}\to\infty$, so $w$ is not bounded.
\end{proof}

For full support, Corollary~\ref{cor:local-cones} recovers the
bounded-ratio description in \cite[Theorem~B and Example~1.4]{HHSW}.

\begin{corollary}[Unit local generators]
\label{cor:unit-local-ratios}
Let $T\subseteq\Delta_3^2$ be a quadratic $M$-convex support, and let
$P$ have positive entries on $T$ and zero entries off $T$.
If every local generator ratio appearing in
Corollary~\ref{cor:local-cones} is at most $1$, then $P$ is Lorentzian.
\end{corollary}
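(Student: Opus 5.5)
The plan is to reduce the statement to Lemma~\ref{lem:local-lorentzian-tests}, treating the supports case by case and grouping them according to the list in Corollary~\ref{cor:local-cones}. No new spectral argument is needed. The hypothesis is only that every listed generator ratio is at most $1$, and we check that each criterion in Lemma~\ref{lem:local-lorentzian-tests} is then met.

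In case (i) the generator set is empty and $P$ is Lorentzian unconditionally, so there is nothing to check. In case (ii) the only generator is $u$, and $U\le1$ is exactly the criterion. In case (iii) the hypothesis gives $X\le1$, hence $X\le2$. In case (iv) the hypothesis gives $X,Y\le1$, hence $X+Y\le2$.

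Case (v), full support, is the only one where more is required than monotonicity of a linear bound, so it is the main step. From $X,Y,Z\le1$ and the positivity of these ratios we get $XY,XZ,YZ\le1$. For the determinant condition we use the factorization already recorded in the proof of Corollary~\ref{cor:local-cones}:
\[
XYZ+2-X-Y-Z=(1-X)(1-Y)+(1-Z)(1-XY).
\]
Each factor on the right is nonnegative, because $X,Y,Z\le1$ and $XY\le1$. The quartet of inequalities in Lemma~\ref{lem:local-lorentzian-tests}(v) therefore holds.

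The one point needing care is the phrase ``up to a permutation of the indices.'' We should note that the generators listed in Corollary~\ref{cor:local-cones} are defined in the same permuted coordinates in which Lemma~\ref{lem:local-lorentzian-tests} states its criterion. The hypothesis therefore transports along the permutation, and the case analysis above covers every $M$-convex $T$ from Lemma~\ref{lem:quadratic-supports}.
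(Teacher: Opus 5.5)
Your proposal is correct and follows the paper's proof essentially verbatim. The non-full supports are settled directly by the criteria of Lemma~\ref{lem:local-lorentzian-tests}; for full support, $X,Y,Z\le1$ gives the product inequalities and makes both terms of $(1-X)(1-Y)+(1-Z)(1-XY)$ nonnegative, and your remark about transporting the hypothesis along the index permutation is harmless added care that the paper leaves implicit.
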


\begin{proof}
The local criteria settle every non-full support. Under full support, the
hypotheses imply the three product inequalities. They also make both terms in
$XYZ+2-X-Y-Z=(1-X)(1-Y)+(1-Z)(1-XY)$ nonnegative, which supplies the remaining
determinant inequality in Lemma~\ref{lem:local-lorentzian-tests}(v).
\end{proof}

\subsection{Global bounded ratios}

Fix $d\ge2$ and an $M$-convex support $S\subseteq\Delta_3^d$.
For $\beta\in\Delta_3^{d-2}$, define the quadratic slice support
\[
S_\beta
:=
\left\{
e_i+e_j:
\beta+e_i+e_j\in S
\right\}
\subseteq\Delta_3^2.
\]
The exchange axiom restricts to $S_\beta$. Hence every slice support is
$M$-convex and appears in Lemma~\ref{lem:quadratic-supports}.

For each $\beta\in\Delta_3^{d-2}$ and each local generator
$g=\sum_{\gamma\in S_\beta}g_\gamma\varepsilon_\gamma\in\mathbb R^{S_\beta}$
from Corollary~\ref{cor:local-cones}, define its extension to $\mathbb R^S$ by
\[
\widetilde g^{\,\beta}
:=
\sum_{\gamma\in S_\beta}
g_\gamma\varepsilon_{\beta+\gamma},
\]
where $\varepsilon_\gamma$ and $\varepsilon_\alpha$ denote the standard basis
vectors of $\mathbb R^{S_\beta}$ and $\mathbb R^S$, respectively. Let
\[
\cG(S)
:=
\left\{
\widetilde g^{\,\beta}:
\beta\in\Delta_3^{d-2},
\quad
g\text{ is a local generator for }S_\beta
\right\}.
\]

The class $\cL_S^+$ is nonempty. Indeed, the polynomial with $c_\alpha=1$
for every $\alpha\in S$ has all local generator ratios equal to $1$.
Corollary~\ref{cor:unit-local-ratios} and the Hessian-slice characterization
therefore show that this polynomial is Lorentzian.

\begin{theorem}[Local-to-global bounded-ratio theorem]
\label{thm:local-global}
Let $S\subseteq\Delta_3^d$ be $M$-convex. Then
\[
\BR(\cL_S^+)
=
\Cone\bigl(\cG(S)\bigr).
\]
\end{theorem}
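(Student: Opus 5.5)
The proof has two inclusions, and I would model it on the proof of Corollary~\ref{cor:local-cones}.

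\emph{Easy inclusion.} Every $\widetilde g^{\,\beta}\in\cG(S)$ is bounded on $\cL_S^+$. If $f\in\cL_S^+$, then the slice $H_\beta(f)$ is Lorentzian, positive on $S_\beta$ and zero off $S_\beta$. So $H_\beta(f)$ lies in $\cL_{S_\beta}^+$, and
\[
R_{\widetilde g^{\,\beta}}(f)=R_g(H_\beta(f))\le B_{S_\beta}(g)<\infty
\]
by Corollary~\ref{cor:local-cones}. Since $\BR(\cL_S^+)$ is a convex cone, it contains $\Cone(\cG(S))$.

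\emph{Reverse inclusion.} I would argue by separation in logarithmic coordinates. Suppose $w\notin\Cone(\cG(S))$. The set $\cG(S)$ is finite, so its cone is closed, and there is $q\in\R^S$ with
\[
\langle \widetilde g^{\,\beta},q\rangle\le 0\quad\text{for every generator},\qquad \langle w,q\rangle>0 .
\]
Define $f_t$ by $c_\alpha(t)=e^{tq_\alpha}$ for $\alpha\in S$ and $t\ge0$. The support is $S$, which is $M$-convex. For each $\beta$, every local generator ratio of $H_\beta(f_t)$ equals
\[
R_g(H_\beta(f_t))=e^{t\langle \widetilde g^{\,\beta},q\rangle}\le1 .
\]
Corollary~\ref{cor:unit-local-ratios} then shows that $H_\beta(f_t)$ is Lorentzian. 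If $S_\beta$ is empty the slice is zero, which is trivially Lorentzian. By the Hessian-slice characterization of \cite[Theorem~2.25]{BH}, $f_t\in\cL_S^+$. But $R_w(f_t)=e^{t\langle w,q\rangle}\to\infty$, so $w\notin\BR(\cL_S^+)$.

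\emph{Bookkeeping to check.} The identification of local and global exponents must be exact: the local generator $g$ on $S_\beta$ pulls back precisely to $\widetilde g^{\,\beta}$, so that $\langle g,\,q|_{\beta+S_\beta}\rangle=\langle\widetilde g^{\,\beta},q\rangle$. Corollaries~\ref{cor:local-cones} and~\ref{cor:unit-local-ratios} are stated only up to $S_3$-symmetry, so I would also confirm that the generator list is transported correctly under the permutation that puts $S_\beta$ into normal form. This is where care is needed, but it is bookkeeping rather than a real obstacle.

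The only substantive point is that Corollary~\ref{cor:unit-local-ratios} gives a \emph{sufficient} Lorentzian condition using only the generators. This is what allows the slices to be handled independently, even though they share coefficients. The construction never asks any slice to reach a boundary, so the interactions between slices do not matter for the cone. They matter only for the sharp constants treated later.
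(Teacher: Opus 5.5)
Your proposal is correct and follows the paper's proof essentially step for step. The inclusion $\Cone(\cG(S))\subseteq\BR(\cL_S^+)$ comes from slicewise boundedness via Corollary~\ref{cor:local-cones}, and the reverse inclusion uses the same separating vector $q$, the same family $c_\alpha(t)=e^{tq_\alpha}$, and Corollary~\ref{cor:unit-local-ratios} combined with the Hessian-slice characterization, while your bookkeeping points (exact pullback of exponents, transport under $S_3$, empty slices) are left implicit in the paper and check out.
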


\begin{proof}
If $f\in\cL_S^+$, every $H_\beta(f)$ is Lorentzian. The local generators
on $S_\beta$ are bounded by Corollary~\ref{cor:local-cones}. Their extensions
therefore generate bounded ratios on $\cL_S^+$, which proves
$\Cone(\cG(S))\subseteq\BR(\cL_S^+)$.

For the reverse inclusion, let $w\notin\Cone(\cG(S))$.
Since $\Cone(\cG(S))$ is a finitely generated closed convex cone, the
separating hyperplane theorem gives
$q=(q_\alpha)_{\alpha\in S}\in\mathbb R^S$ such that
\[
\langle g,q\rangle\le0
\quad\text{for every }g\in\cG(S),
\qquad
\langle w,q\rangle>0.
\]
For $t\ge0$, define
\[
f_t(\mathbf x)
:=
\sum_{\alpha\in S}
e^{tq_\alpha}\frac{\mathbf x^\alpha}{\alpha!}.
\]

For a local generator $g\in\mathbb R^{S_\beta}$, its extension
$\widetilde g^{\,\beta}$ belongs to $\cG(S)$, and therefore
\[
\log R_g\bigl(H_\beta(f_t)\bigr)
=
t\langle \widetilde g^{\,\beta},q\rangle
\le0.
\]
Every local generator ratio is at most $1$. Hence
Corollary~\ref{cor:unit-local-ratios} makes every $H_\beta(f_t)$ Lorentzian,
and the Hessian-slice characterization gives $f_t\in\cL_S^+$ for all
$t\ge0$. On the other hand,
\[
R_w(f_t)
=
\prod_{\alpha\in S}
e^{tq_\alpha w_\alpha}
=
e^{t\langle w,q\rangle}
\longrightarrow\infty
\qquad (t\to\infty).
\]
Thus $w\notin\BR(\cL_S^+)$, proving the reverse inclusion.
\end{proof}

For full support, write $\cL_{3,d}^+:=\cL_{\Delta_3^d}^+$.
For $\beta\in\Delta_3^{d-2}$ and
$\{i,j,k\}=\{1,2,3\}$, define
\[
g_{\beta,i}
:=
\varepsilon_{\beta+2e_i}
+
\varepsilon_{\beta+e_j+e_k}
-
\varepsilon_{\beta+e_i+e_j}
-
\varepsilon_{\beta+e_i+e_k}
\in\mathbb R^{\Delta_3^d}.
\]
Its associated elementary rhombus ratio is
\[
R_{g_{\beta,i}}(f)
=
\frac{
c_{\beta+2e_i}\,
c_{\beta+e_j+e_k}
}{
c_{\beta+e_i+e_j}\,
c_{\beta+e_i+e_k}
}.
\]

\begin{corollary}[Full support]
\label{cor:full-support}
For every $d\ge2$,
\[
\BR(\cL_{3,d}^+)
=
\Cone
\left\{
g_{\beta,i}:
\beta\in\Delta_3^{d-2},
\quad
i\in[3]
\right\}.
\]
\end{corollary}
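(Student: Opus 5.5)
This follows directly from Theorem~\ref{thm:local-global} once we compute $\cG(\Delta_3^d)$. The plan has three steps.

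First, fix $\beta\in\Delta_3^{d-2}$ and determine the slice support. For every pair $i,j$ we have $|\beta+e_i+e_j|=d$, so $\beta+e_i+e_j\in\Delta_3^d=S$. Hence $S_\beta=\Delta_3^2$ for every $\beta$, and case~(v) of Lemma~\ref{lem:local-lorentzian-tests} applies. By Corollary~\ref{cor:local-cones}, the local generators on $S_\beta$ are $x,y,z$, the exponent vectors of
\[
X=\frac{p_{11}p_{23}}{p_{12}p_{13}},\qquad Y=\frac{p_{22}p_{13}}{p_{12}p_{23}},\qquad Z=\frac{p_{33}p_{12}}{p_{13}p_{23}}.
\]

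Second, identify the extensions. The generator $x=\varepsilon_{11}+\varepsilon_{23}-\varepsilon_{12}-\varepsilon_{13}$ extends to
\[
\widetilde x^{\,\beta}=\varepsilon_{\beta+2e_1}+\varepsilon_{\beta+e_2+e_3}-\varepsilon_{\beta+e_1+e_2}-\varepsilon_{\beta+e_1+e_3}=g_{\beta,1}.
\]
In the same way, $\widetilde y^{\,\beta}=g_{\beta,2}$ (with $\{j,k\}=\{1,3\}$) and $\widetilde z^{\,\beta}=g_{\beta,3}$ (with $\{j,k\}=\{1,2\}$). Note that $g_{\beta,i}$ is symmetric in $j$ and $k$, so it is well defined. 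Therefore
\[
\cG(\Delta_3^d)=\{g_{\beta,i}:\beta\in\Delta_3^{d-2},\ i\in[3]\}.
\]

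Third, apply Theorem~\ref{thm:local-global} with $S=\Delta_3^d$; this gives the claimed equality of cones. The full simplex is $M$-convex, so the theorem applies.

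There is no real obstacle. The only point needing care is matching the labels: the local generator for index $i$ must have the diagonal entry $ii$ and the opposite entry $jk$ in the numerator, which holds for all three of $X,Y,Z$.
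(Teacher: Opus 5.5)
Your proposal is correct and follows the paper's own proof: the paper likewise observes that every slice support is $\Delta_3^2$, identifies the extended local generators $\widetilde x^{\,\beta},\widetilde y^{\,\beta},\widetilde z^{\,\beta}$ with $g_{\beta,1},g_{\beta,2},g_{\beta,3}$ via Corollary~\ref{cor:local-cones}, and then applies Theorem~\ref{thm:local-global}. Your explicit check that the labels match is exactly the detail the paper leaves implicit.
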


\begin{proof}
Every slice has support $\Delta_3^2$. By Corollary~\ref{cor:local-cones}, its
extended local generators are $g_{\beta,i}$ for $i\in[3]$.
Theorem~\ref{thm:local-global} gives the claim.
\end{proof}

\begin{remark}[Relation to Baldi--Kummer]
Baldi and Kummer~\cite{BaldiKummer2026} identify the bounded-ratio cone
for Lorentzian polynomials with fixed $M$-convex support $S$ with the dual
cone of $M$-convex functions on $S$. In three variables, the local exchange
characterization of $M$-convex functions identifies this dual cone with
$\operatorname{Cone}(\cG(S))$. Thus their result gives an alternative route
to Theorem~\ref{thm:local-global}. We retain the direct proof above because
the explicit quadratic-slice description is used throughout the subsequent
analysis.
\end{remark}

\begin{remark}[Beyond the ternary case]
The ternary hypothesis in Theorem~\ref{thm:local-global} is essential.
In the sequel~\cite{CGLWW-II}, we obtain a complete classification of the pairs $(n,d)$ with $d \geq 2$
for which the quadratic local-to-global principle holds for every
$M$-convex support $S\subseteq\Delta_n^d$. Besides the ternary cases treated
here and the trivial quadratic case $d=2$, the only additional case is $(n,d)=(4,3)$;
in all remaining cases, the principle fails already for full support.
\end{remark}

\begin{remark}[Soskin's observation]
For ternary Lorentzian polynomials, passing from full support to an
arbitrary $M$-convex support $S$ yields no new bounded ratios and no
sharper upper bounds. This follows from the following two facts.

\smallskip
\noindent
\emph{Fact 1.}
The restriction of a ternary Lorentzian polynomial with full support
to an $M$-convex support $S$ remains Lorentzian.

\smallskip
\noindent
\emph{Fact 2.}
Every ternary Lorentzian polynomial with support $S$ can be approximated
coefficientwise by Lorentzian polynomials of the same degree in the
same variables with full support.
\end{remark}

\subsection{Geometry of the bounded-ratio cone}

For a positive full-support symmetric matrix $P=(p_{ij})$, retain the ratios
$X,Y,Z$ defined before Lemma~\ref{lem:local-lorentzian-tests} and write
$(x,y,z)=(\log X,\log Y,\log Z)$. Let $\cC\subseteq\R^3$ be the set of all
such triples arising from positive full-support Lorentzian matrices.

\begin{lemma}[Convexity of the full-support log-feasible set]
\label{lem:row-convex}
The set $\cC$ is closed and convex.
\end{lemma}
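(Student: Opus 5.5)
By Lemma~\ref{lem:local-lorentzian-tests}(v), the set $\cC$ is exactly the set of $(x,y,z)\in\R^3$ with $X=e^x$, $Y=e^y$, $Z=e^z$ satisfying
\[
x+y\le0,\qquad x+z\le0,\qquad y+z\le0,\qquad F(x,y,z):=e^{x+y+z}+2-e^x-e^y-e^z\ge0.
\]
The first step is to check that every triple is realized. Given any positive $X,Y,Z$, we set $p_{12}=p_{13}=p_{23}=1$ and $p_{11}=X$, $p_{22}=Y$, $p_{33}=Z$. This matrix has the prescribed ratios, so the ratio map is surjective onto $\R^3$. Consequently $\cC$ is the preimage of the Lorentzian conditions, and closedness is immediate because all four defining functions are continuous.

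For convexity, the three half-spaces are convex, so it suffices to show that $\{F\ge0\}$, intersected with them, is convex. I would do this by exhibiting $\cC$ as an intersection of convex sets in log coordinates. On the region $x+y\le0$ we have $1-XY\ge0$, so the determinant inequality can be solved for $Z$:
\[
Z(1-XY)\le 2-X-Y.
\]
The sign of $1-XY$ is the point of care. When $XY<1$, the inequality reads $Z\le (2-X-Y)/(1-XY)$. When $XY=1$, it reduces to $X+Y\le2$, which by AM--GM forces $X=Y=1$.

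So on the open region $x+y<0$, the condition $F\ge0$ becomes $z\le \phi(x,y):=\log(2-e^x-e^y)-\log(1-e^{x+y})$, together with the requirement $e^x+e^y<2$ where this is relevant. Convexity of the set $\{z\le\phi\}$ over a convex base is equivalent to concavity of $\phi$. Proving that concavity is the main obstacle.

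My first approach to the obstacle avoids $\phi$ altogether. I would show that $\cC$ equals the set cut out by the linear and two-variable conditions $x+y\le0$, $x+z\le0$, $y+z\le0$, and
\[
e^x+e^y\le 2,\qquad e^x+e^z\le2,\qquad e^y+e^z\le2,
\]
intersected with one genuinely three-variable log-concave condition. The functions $g(x,y)=e^x+e^y$ are convex, so each sublevel set $\{e^x+e^y\le2\}$ is convex. For the remaining condition, I would substitute $X=1-a$, $Y=1-b$, $Z=1-c$ and use the factorization from Corollary~\ref{cor:local-cones}, which in these variables gives $F=ab+c(a+b-ab)$. If that route stalls, the fallback is a direct Hessian computation. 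In the variables $s=e^x$ and $t=e^y$, one has $\phi=\log(2-s-t)-\log(1-st)$, and the task is to show that the Hessian with respect to $(x,y)$ is negative semidefinite on the feasible region.

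The second log term is harmless. The function $\log(1-e^{x+y})$ is a concave function of $x+y$, so $-\log(1-e^{x+y})$ is convex and fights the concavity we want. The first term, $\log(2-e^x-e^y)$, is concave. I therefore expect to need a combined calculation, checking the trace and the determinant of the $2\times2$ Hessian of $\phi$ under the constraints $s+t<2$ and $st<1$.

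Finally, the boundary pieces where $XY=1$ force $X=Y=1$ and $z\le0$. These points are the limits of the region $\{z\le\phi\}$, so they are absorbed by taking the closure.
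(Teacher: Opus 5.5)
Your setup matches the paper's. The matrix with unit off-diagonal entries shows that every positive triple is a ratio triple, so $\cC$ is cut out by the four continuous inequalities and is closed. On $\{e^x+e^y<2\}$ the determinant condition becomes $z\le\phi(x,y)$, and the points with $X=Y=1$, $Z\le1$ are recovered by closure. The gap is that the whole content of the convexity claim is the concavity of $\phi$, and you leave exactly that step open. You call it ``the main obstacle'' and offer a first route whose key ingredient, the ``genuinely three-variable log-concave condition'', is never specified. You then describe the Hessian check only as something you expect to need. As written, convexity is not proved.

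The first route also cannot work as sketched. The identity $F=ab+c(a+b-ab)$ certifies feasibility when $X,Y,Z\le1$, which is how it is used in Corollary~\ref{cor:local-cones}. It says nothing about convexity in $(x,y,z)$. In fact the set $\{F\ge0\}$ by itself is not convex in log coordinates: $(6,6,6)$ and $(-8,-8,0)$ both satisfy $F\ge0$, but their midpoint $(-1,-1,3)$ gives $F=e+2-2e^{-1}-e^{3}<0$. So the constraint $x+y\le0$ must enter essentially, and the hypograph of $\phi$ is the way to encode it.

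To close the gap, carry out your fallback computation. With $a=e^x$, $b=e^y$, $A=2-a-b$, $B=1-ab$, one gets
\[
\phi_{xx}=-\frac{a(2-b)}{A^2}+\frac{ab}{B^2}=\frac{a(b-1)^2(a^2b+b-2)}{A^2B^2},
\qquad
\det\nabla^2\phi=\frac{2ab(a-1)^2(b-1)^2}{A^3B^2}\ge0 .
\]
Then $\phi_{xx}\le0$, because $b<2-a$ together with $(2-a)(1+a^2)-2=-a(a-1)^2\le0$ gives $b(1+a^2)<2$. Symmetry handles $\phi_{yy}$, so $\nabla^2\phi$ is negative semidefinite on the domain. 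The sign checks on $\phi_{xx}$ and on the determinant both use the specific domain constraints $A>0$ and $a+b<2$. That is why this step has to be carried out rather than left as an expectation.
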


\begin{proof}
By Lemma~\ref{lem:local-lorentzian-tests}(v), the coordinatewise exponential
image of $\cC$ consists of the positive triples satisfying
$XY,XZ,YZ\le1$ and $XYZ+2-X-Y-Z\ge0$.
Every positive triple occurs as the ratio triple of the matrix with
$p_{12}=p_{13}=p_{23}=1$ and
$(p_{11},p_{22},p_{33})=(X,Y,Z)$. These inequalities therefore give a
complete description.

Since $XY\le1$, we have
\[
0\le XYZ+2-X-Y-Z
=Z(XY-1)+2-X-Y
\le2-X-Y,
\]
and hence $X+Y\le2$. Equality forces $XY=1$, so $X=Y=1$ and $0<Z\le1$.

On the interior $X+Y<2$, we also have $XY<1$, and the determinant condition
becomes
\[
Z\le\frac{2-X-Y}{1-XY}.
\]
This upper bound also implies $XZ,YZ\le1$. For example, multiplying the
bound by $X$ reduces the desired inequality to $-(X-1)^2\le0$, and the
argument for $Y$ is identical.

It follows that $\cC$ is the closure of the hypograph $z\le\phi(x,y)$ over
the convex domain $e^x+e^y<2$, where
\[
\phi(x,y)=\log(2-e^x-e^y)-\log(1-e^{x+y}).
\]
Set $a=e^x$, $b=e^y$, $A=2-a-b$, and $B=1-ab$. On the interior,
$A,B>0$, and differentiation gives
\[
\begin{aligned}
\phi_{xx}&=\frac{a(b-1)^2(a^2b+b-2)}{B^2A^2},
\\
\det(\nabla^2\phi)
&=\frac{2ab(a-1)^2(b-1)^2}{B^2A^3}\ge0.
\end{aligned}
\]
Since $a+b<2$ and $(2-a)(1+a^2)-2=-a(a-1)^2\le0$, we have
$b(1+a^2)<2$ and $\phi_{xx}\le0$. Symmetry gives $\phi_{yy}\le0$.
The Hessian is therefore negative semidefinite, so $\phi$ is concave. Its
hypograph is convex, as is its closure $\cC$. The defining inequalities also
show that $\cC$ is closed.
\end{proof}

\begin{theorem}[Multiplicative convexity]
\label{thm:log-convexity}
Let $d\ge2$ and $S\subseteq\Delta_3^d$ be $M$-convex. If $f,h\in\cL_S^+$
have normalized coefficients $(c_\alpha)$ and $(d_\alpha)$, then for
$0\le t\le1$ the polynomial
\[
f_t(\mathbf x):=\sum_{\alpha\in S}c_\alpha^t d_\alpha^{1-t}
\frac{\mathbf x^\alpha}{\alpha!}
\]
belongs to $\cL_S^+$. Equivalently, the logarithmic coefficient locus
\[
\mathscr A_S:=\left\{(\log c_\alpha)_{\alpha\in S}:
\sum_{\alpha\in S}c_\alpha\frac{\mathbf x^\alpha}{\alpha!}\in\cL_S^+\right\}
\]
is convex.
\end{theorem}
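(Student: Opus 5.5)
The plan is to reduce the global statement to the slicewise description and check convexity one Hessian slice at a time. By the Hessian-slice characterization, $f_t\in\cL_S^+$ iff every $H_\beta(f_t)$ is Lorentzian. The support of $f_t$ is automatically $S$, since $c_\alpha^t d_\alpha^{1-t}>0$ on $S$, and the support is $M$-convex by hypothesis. So it suffices to fix $\beta\in\Delta_3^{d-2}$ and show the following. If $P,Q$ are Lorentzian matrices supported exactly on $T=S_\beta$, then the entrywise geometric mean $P^{t}Q^{1-t}$ is again Lorentzian. In logarithmic coordinates this says the local log-feasible set
\[
\mathscr A_T:=\{(\log p_\alpha)_{\alpha\in T}: P \text{ Lorentzian with support } T\}
\]
is convex for every quadratic $M$-convex support $T$. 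Then $\mathscr A_S$ is the intersection of the preimages of the $\mathscr A_{S_\beta}$ under the coordinate projections $\R^S\to\R^{S_\beta}$, and is therefore convex.

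I will handle the local claim through the case list of Lemma~\ref{lem:local-lorentzian-tests}, working up to the $S_3$-symmetry of Lemma~\ref{lem:quadratic-supports}.
\begin{enumerate}[label=(\roman*)]
\item For the supports in case (i), $\mathscr A_T=\R^T$, which is convex.
\item In case (ii), the condition is $\log U\le 0$. Since $\log U$ is a linear functional of the log-coordinates, this is a half-space.
\item In case (iii), the condition $\log X\le\log 2$ is likewise a half-space.
\item In case (iv), the condition is $e^{x}+e^{y}\le 2$, where $x=\log X$ and $y=\log Y$ are linear functionals of the log-coordinates. The function $e^x+e^y$ is convex, so its sublevel set is convex, and its preimage under a linear map remains convex.
\item For full support, the condition is exactly $(\log X,\log Y,\log Z)\in\cC$, and $\cC$ is convex by Lemma~\ref{lem:row-convex}. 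So $\mathscr A_T$ is the preimage of a convex set under the linear map $q\mapsto(\langle x,q\rangle,\langle y,q\rangle,\langle z,q\rangle)$, hence convex.
\end{enumerate}

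The main work, namely the concavity of $\phi$ in the full-support case, has already been done in Lemma~\ref{lem:row-convex}. The remaining care point is the bookkeeping: the symmetry reduction must be compatible with the log-linear maps, which it is because permuting indices permutes coordinates linearly. Finally, the equivalence with convexity of $\mathscr A_S$ is immediate, since the logarithm of $c_\alpha^td_\alpha^{1-t}$ is the convex combination $t\log c_\alpha+(1-t)\log d_\alpha$.
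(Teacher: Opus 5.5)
Your proposal is correct and follows essentially the same route as the paper. Both reduce to individual Hessian slices via the slice characterization, use that the local log-ratios are linear in the coefficient logs, and check convexity of each local feasible set case by case: the whole space, half-spaces, the convex region $e^r+e^s\le 2$, and Lemma~\ref{lem:row-convex} for full support. Your description of $\mathscr A_S$ as an intersection of preimages of the local log-feasible sets is a slightly more formal packaging of the same argument.
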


\begin{proof}
Fix a slice and write
$u=(\log p_\gamma)_{\gamma\in S_\beta}$ for its coefficient-log vector.
Geometric interpolation is affine in $u$, while every local log-ratio is
linear in $u$. We only need to check that each local feasible set is convex.

The supports with no generators give the entire coordinate space, and the
one-generator cases give half-spaces. For the five-point support, the
ratio-log region $e^r+e^s\le2$ is convex, so its inverse image under
$u\mapsto(r,s)=(\log X,\log Y)$ is convex. For full support, the same
conclusion follows from Lemma~\ref{lem:row-convex}. Thus every slice of
$f_t$ is Lorentzian. The Hessian-slice characterization then yields
$f_t\in\cL_S^+$.
\end{proof}

For a nonempty convex set $C\subseteq\R^S$ and a cone $K\subseteq\R^S$,
write
\[
\begin{aligned}
\rec(C)&:=\{q:a+tq\in C\text{ for every }a\in C\text{ and }t\ge0\},
\\
K^\circ&:=\{q:\langle w,q\rangle\le0\text{ for every }w\in K\}
\end{aligned}
\]
for the recession cone of $C$ and the polar cone of $K$, respectively.

\begin{theorem}[Degeneration directions]
\label{thm:recession-duality}
Let $d\ge2$, let $S\subseteq\Delta_3^d$ be $M$-convex, and let
$\mathscr A_S$ be the logarithmic coefficient locus from
Theorem~\ref{thm:log-convexity}. Then
\[
\rec(\mathscr A_S)
=
\Cone\bigl(\cG(S)\bigr)^\circ
=
\left\{
q\in\R^S:
\langle g,q\rangle\le0
\text{ for every }g\in\cG(S)
\right\}.
\]
Consequently, $\BR(\cL_S^+)=\rec(\mathscr A_S)^\circ$.
\end{theorem}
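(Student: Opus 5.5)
We prove the two inclusions between $\rec(\mathscr A_S)$ and $\Cone(\cG(S))^\circ$, and then dualize. Throughout, $\mathscr A_S$ is nonempty (it contains $0$, the all-ones polynomial) and convex by Theorem~\ref{thm:log-convexity}.

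\emph{The inclusion $\Cone(\cG(S))^\circ\subseteq\rec(\mathscr A_S)$.} Let $q$ satisfy $\langle g,q\rangle\le0$ for every $g\in\cG(S)$, and fix $a\in\mathscr A_S$ and $t\ge0$.

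The key observation is that every local feasible set in log coordinates is stable under moving each local generator log-ratio downward. Concretely, fix a slice $\beta$ and let $r_g(a):=\langle\widetilde g^{\,\beta},a\rangle$ be the local log-ratios. Then
\[
r_g(a+tq)=r_g(a)+t\langle\widetilde g^{\,\beta},q\rangle\le r_g(a)
\]
for every local generator $g$. It therefore suffices to check that each local feasible region in log-ratio coordinates is a down-set, i.e.\ closed under decreasing any coordinate. We verify this case by case using Lemma~\ref{lem:local-lorentzian-tests}:
\begin{itemize}[label={}]
\item Supports with no generators impose no condition.
\item The conditions $U\le1$ and $X\le2$ are clearly down-closed.
\item The condition $X+Y\le2$ is down-closed.
\item For full support, $\cC$ is the closure of the hypograph $z\le\phi(x,y)$ over the domain $e^x+e^y<2$, together with the boundary piece $X=Y=1$, $Z\le1$. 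We need $\cC$ to be down-closed in $x$ and $y$ as well as in $z$. Decreasing $z$ is immediate. For $x$ and $y$, compute $\partial_x\phi\le0$ on the interior. Indeed,
\[
\partial_x\phi=\frac{-a}{A}+\frac{ab}{B}=\frac{a\,(bA-B)}{AB},
\qquad bA-B=2b-ab-b^2-1+ab=-(1-b)^2\le0 .
\]
So $\phi$ is nonincreasing in $x$, and symmetrically in $y$. The domain $e^x+e^y<2$ is also down-closed, and taking closures preserves down-closedness.
\end{itemize}
Hence each slice of $a+tq$ remains Lorentzian. The Hessian-slice characterization then gives $a+tq\in\mathscr A_S$, so $q\in\rec(\mathscr A_S)$.

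\emph{The inclusion $\rec(\mathscr A_S)\subseteq\Cone(\cG(S))^\circ$.} Let $q\in\rec(\mathscr A_S)$ and $g\in\cG(S)$. By Theorem~\ref{thm:local-global}, $\langle g,\cdot\rangle$ is bounded above on $\mathscr A_S$ by some constant $M$. Since $tq\in\mathscr A_S$ for all $t\ge0$, we get $t\langle g,q\rangle\le M$ for all $t\ge0$, which forces $\langle g,q\rangle\le0$.

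\emph{The consequence.} Since $\Cone(\cG(S))$ is finitely generated and hence closed, the bipolar theorem gives
\[
\rec(\mathscr A_S)^\circ=\Cone(\cG(S))^{\circ\circ}=\Cone(\cG(S))=\BR(\cL_S^+),
\]
where the last equality is Theorem~\ref{thm:local-global}.

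The main obstacle is the down-closedness check for the full-support region $\cC$. That is the only place where more than linear inequalities is needed, and it reduces to the sign computation of $\partial_x\phi$ above. The computation shows that $\phi$ is monotone in each variable, so the check goes through. Alternatively, $\Cone(\cG(S))^\circ\subseteq\rec(\mathscr A_S)$ can be obtained without the monotonicity check: Corollary~\ref{cor:unit-local-ratios} shows that $tq\in\mathscr A_S$, and for a closed convex set this would give $q\in\rec$. However, $\mathscr A_S$ need not be closed, because the boundary piece of $\cC$ only arises as a limit. For that reason we prefer the direct down-set argument.
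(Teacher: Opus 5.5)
Your proof is correct and follows essentially the paper's route: down-closedness of each local feasible region gives the first inclusion, boundedness along a ray gives the second, and the bipolar theorem gives the consequence. The only variation is that you handle full support with $\partial_x\phi=-a(1-b)^2/(AB)\le0$ on the hypograph description of $\cC$. The paper instead works directly with $XY,XZ,YZ\le1$ and $F=XYZ+2-X-Y-Z$, using $\partial F/\partial X=YZ-1\le0$.

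Your closing aside is mistaken. $\cC$ is closed, and the piece $X=Y=1$, $Z\le1$ is actually attained, e.g.\ by $p_{12}=p_{13}=p_{23}=1$ and $(p_{11},p_{22},p_{33})=(1,1,Z)$. Hence $\mathscr A_S=\rho_S^{-1}(C_S)$ is closed. The shortcut through Corollary~\ref{cor:unit-local-ratios} therefore does work: it puts the ray $\{tq:t\ge0\}$ inside $\mathscr A_S$, and for a closed convex set a single such ray already certifies a recession direction.
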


\begin{proof}
Suppose $\langle g,q\rangle\le0$ for every $g\in\cG(S)$, and fix
$a\in\mathscr A_S$. Along $a+tq$, every local generator ratio is
nonincreasing.

Every non-full slice remains feasible along this ray. A slice with no
generator has no constraint. A single upper bound governs each one-generator
case, and $X+Y\le2$ governs the five-point case.

For full support, feasibility is characterized by $XY,XZ,YZ\le1$ and
$F(X,Y,Z):=XYZ+2-X-Y-Z\ge0$.
Coordinatewise decrease preserves the product inequalities. It also preserves
the determinant inequality, because $\partial F/\partial X=YZ-1\le0$ and the
cyclic derivatives are likewise nonpositive on the feasible region.

Every slice remains Lorentzian along the ray. Since the support remains $S$,
we have $a+tq\in\mathscr A_S$ for all $t\ge0$ and hence
$q\in\rec(\mathscr A_S)$.

Conversely, let $q\in\rec(\mathscr A_S)$ and choose $a\in\mathscr A_S$.
Theorem~\ref{thm:local-global} bounds every $g\in\cG(S)$. Its ratio along
the ray is $\exp(\langle g,a\rangle+t\langle g,q\rangle)$, so boundedness
forces $\langle g,q\rangle\le0$. This proves the formula for the recession
cone.

The cone $\BR(\cL_S^+)=\Cone(\cG(S))$ is finitely generated and therefore
closed. The remaining identity follows from the bipolar theorem.
\end{proof}

\begin{theorem}[Exposed extreme rays]
\label{thm:extreme-rays}
Let $d\ge2$. For full support $S=\Delta_3^d$, every ray
$\R_{\ge0}g_{\beta,i}$, where $\beta\in\Delta_3^{d-2}$ and $i\in[3]$, is an
exposed extreme ray of $\BR(\cL_{3,d}^+)$.
Consequently, the full-support bounded-ratio cone has exactly
$3\binom d2$ extreme rays, and the generating set in
Corollary~\ref{cor:full-support} is minimal.
\end{theorem}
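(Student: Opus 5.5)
To show that each ray $\R_{\ge0}g_{\beta,i}$ is exposed, I will produce, for each fixed pair $(\beta,i)$, a vector $q\in\R^{\Delta_3^d}$ with
\[
\langle g_{\beta,i},q\rangle=0,
\qquad
\langle g_{\beta',i'},q\rangle<0\quad\text{for every }(\beta',i')\ne(\beta,i).
\]
Then $q$ lies in the polar cone, and the face $\{w\in\BR(\cL_{3,d}^+):\langle w,q\rangle=0\}$ equals $\R_{\ge0}g_{\beta,i}$. This holds because a nonnegative combination $\sum\lambda_{\beta',i'}g_{\beta',i'}$ pairs to zero with $q$ only if every $\lambda_{\beta',i'}$ with $(\beta',i')\ne(\beta,i)$ vanishes. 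The face is nontrivial because $g_{\beta,i}\ne0$.

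Counting is then immediate. Distinct pairs $(\beta,i)$ give non-proportional vectors, since the positive-coefficient entry $\beta+2e_i$, together with the diagonal location, determines both $\beta$ and $i$. So there are $|\Delta_3^{d-2}|\cdot3=3\binom d2$ distinct extreme rays. Since every extreme ray of a finitely generated cone must appear among its generators, none of them can be dropped, and the generating set of Corollary~\ref{cor:full-support} is minimal.

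To construct $q$, I start from a strictly concave quadratic such as $q_0(\alpha)=-\|\alpha\|^2$. For this choice every rhombus pairing is strictly negative:
\[
\langle g_{\beta',i'},q_0\rangle
=-\bigl(\|\beta'+2e_i\|^2+\|\beta'+e_j+e_k\|^2-\|\beta'+e_i+e_j\|^2-\|\beta'+e_i+e_k\|^2\bigr).
\]
Expanding, the terms in $\beta'$ cancel, and the pairing reduces to $-(4+2-2-2)=-2$. In the log language, $q_0$ corresponds to a strictly feasible interior degeneration direction.

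Next I perturb $q_0$ at a single coordinate to kill the pairing with $g_{\beta,i}$. Set $q=q_0+2\varepsilon_{\beta+2e_i}$. Then $\langle g_{\beta,i},q\rangle=-2+2=0$. The vertex $\beta+2e_i$ also appears in other generators, so I must check that those pairings stay strictly negative. In a generator $g_{\beta',i'}$, the point $\beta+2e_i$ enters with coefficient $+1$ only when it is the apex $\beta'+2e_{i'}$, which forces $(\beta',i')=(\beta,i)$, or when it is the opposite point $\beta'+e_{j'}+e_{k'}$. Every other occurrence has coefficient $-1$, and only makes the pairing more negative.

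This case check is the main obstacle. The worry is whether $\beta+2e_i=\beta'+e_{j'}+e_{k'}$ can happen with $i'\ne i$. If it does, those pairings become $0$, which is bad. If so, I will instead perturb at the other plus-entry, $\beta+e_j+e_k$, or split the correction as $\varepsilon_{\beta+2e_i}+\varepsilon_{\beta+e_j+e_k}$. The problem then reduces to checking which rhombi have one or both of these points as plus-entries, and confirming that no other rhombus has both of them as plus-entries. This last fact is clear, because the two plus-entries of a rhombus determine it. So with the split correction each other rhombus gains at most $+1$, and its pairing is at most $-1<0$.
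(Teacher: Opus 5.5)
Your argument is correct and is essentially the paper's. The paper starts from the same base functional $q^{(0)}_\alpha=-\lVert\alpha\rVert_2^2$ but perturbs by $\tfrac12 g_{\beta,i}$, and it obtains strict negativity on the other generators from the equality case of Cauchy--Schwarz among equal-norm, pairwise distinct generators. Your split correction $\varepsilon_{\beta+2e_i}+\varepsilon_{\beta+e_j+e_k}$ gets the same conclusion by counting plus-entries. The fact you call clear, that the two plus-entries determine the rhombus, is exactly the paper's counting step: their difference $\pm(2e_i-e_j-e_k)$ fixes $i$, and the larger $i$th coordinate fixes the apex and hence $\beta$.

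Keep only the split version, because the single-point corrections genuinely fail. For $d=3$, the point $(2,0,1)$ is both the apex of $g_{e_3,1}$ and the opposite vertex of $g_{e_1,2}$. So each single-point correction sends the other generator's pairing to $0$.
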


\begin{proof}
Put $q^{(0)}_\alpha=-\lVert\alpha\rVert_2^2$. Every generator $g$ has norm
$2$, and a direct calculation gives $\langle g,q^{(0)}\rangle=-2$. Fix
$g_0$ and set $q=q^{(0)}+\frac12g_0$. Then
$\langle g_0,q\rangle=-2+\frac12\lVert g_0\rVert_2^2=0$.

For any other generator $g$, equality cannot hold in Cauchy--Schwarz. Hence
$\langle g,g_0\rangle<4$ and
$\langle g,q\rangle=-2+\frac12\langle g,g_0\rangle<0$.

By Corollary~\ref{cor:full-support}, the functional
$w\mapsto\langle w,q\rangle$ is nonpositive on the bounded-ratio cone and
vanishes there precisely on $\R_{\ge0}g_0$. The ray is therefore exposed,
and the construction applies to every elementary generator.

To count the rays, note that the two positive support points of
$g_{\beta,i}$ are $\beta+2e_i$ and $\beta+e_j+e_k$, where
$\{i,j,k\}=\{1,2,3\}$. Their difference, up to sign, is
$2e_i-e_j-e_k$, which determines $i$. The point with the larger $i$th
coordinate then determines $\beta+2e_i$ and hence $\beta$. Thus the indexed
generators are pairwise distinct.

Because all generators have the same norm, distinct generators span distinct
positive rays. Their number is $3|\Delta_3^{d-2}|=3\binom d2$. Since they
generate the cone, no other extreme rays occur.
\end{proof}

\section{Optimal bounding constants}\label{sec:optimal-bounding-constants}
\label{sec:optimal-constants}

The bounded-ratio cone answers the finiteness question but does not determine
the sharp bounds. We now encode each quadratic optimization by a support
function and then impose the compatibility between slices.

For a nonempty set $C\subseteq\R^N$, write
$\sigma_C(\lambda):=\sup_{x\in C}\langle\lambda,x\rangle$ for its support
function.

For a quadratic $M$-convex support $T$, fix an ordering
$g_{T,1},\ldots,g_{T,m(T)}$ of the generators from
Corollary~\ref{cor:local-cones}. We use the order $i=1,2,3$ for full support.
For a Lorentzian matrix $P$ with support $T$, define
\[
\begin{aligned}
\ell_T(P)&:=(\log R_{g_{T,1}}(P),\dots,\log R_{g_{T,m(T)}}(P)),
\\
\cC_T&:=\{\ell_T(P):P\text{ Lorentzian with support }T\}.
\end{aligned}
\]
When $m(T)=0$, we set $\cC_T=\{0\}\subseteq\R^0$. The support function
$h_T:=\sigma_{\cC_T}$ records the logarithm of the local sharp constant:
\[
h_T(\lambda)=\log\sup_P\prod_{\ell=1}^{m(T)}
R_{g_{T,\ell}}(P)^{\lambda_\ell}.
\]

\begin{lemma}[Local support functions]
\label{lem:local-support-functions}
Up to permutation of the indices, the local log-feasible sets and their
support functions are as follows.

\begin{enumerate}[label=(\roman*)]
\item
For the supports with no nontrivial local generators,
$\cC_T=\{0\}\subseteq\R^0$ and $h_T(0)=0$.

\item
If $T=\{11,12,22\}$, then
\[
\cC_T=(-\infty,0],
\qquad
h_T(a)
=
\begin{cases}
0,&a\ge0,\\
+\infty,&a<0.
\end{cases}
\]

\item
If $T=\{11,12,13,23\}$, then
\[
\cC_T=(-\infty,\log2],
\qquad
h_T(a)
=
\begin{cases}
a\log2,&a\ge0,\\
+\infty,&a<0.
\end{cases}
\]

\item
If $T=\{11,12,13,22,23\}$, then
\[
\cC_T
=
\left\{
(r,s)\in\R^2:
e^r+e^s\le2
\right\}.
\]
For $a,b\ge0$,
\[
h_T(a,b)
=
(a+b)\log2
+a\log a+b\log b
-(a+b)\log(a+b),
\]
with the convention $0\log0=0$. If $a<0$ or $b<0$, then
$h_T(a,b)=+\infty$.

\item
For full support $T=\Delta_3^2$, we have $\cC_T=\cC$, where $\cC$ is the
full-support log-feasible set introduced before
Lemma~\ref{lem:row-convex}.
\end{enumerate}
\end{lemma}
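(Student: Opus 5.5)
The plan is to read off each $\cC_T$ from Lemma~\ref{lem:local-lorentzian-tests}. Then, for every ratio value allowed by that lemma, I would exhibit a realizing matrix. Finally I would compute the support functions by elementary one- or two-variable optimization.

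\emph{Feasible sets.} For the supports in case (i), Corollary~\ref{cor:local-cones} gives no generators, so $\cC_T=\{0\}\subseteq\R^0$ and $h_T(0)=0$ by convention. In cases (ii)--(iv), Lemma~\ref{lem:local-lorentzian-tests} shows that $\cC_T$ is contained in, respectively, $\{U\le1\}$, $\{X\le2\}$ and $\{X+Y\le2\}$ after taking logarithms. For the reverse inclusions I will realize every admissible value explicitly: set all off-diagonal entries on $T$ equal to $1$ and choose the diagonal entries freely. In case (ii), $p_{11}=U$, $p_{22}=1$ gives any $U>0$. In case (iii), $p_{11}=X$ gives any $X>0$. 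In case (iv), $p_{11}=X$, $p_{22}=Y$ gives any positive pair. Case (v) is precisely the definition of $\cC$ before Lemma~\ref{lem:row-convex}, since $\ell_T=(\log X,\log Y,\log Z)$ in the chosen order.

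\emph{Support functions.} In cases (ii) and (iii), $\cC_T$ is a half-line $(-\infty,c]$. Hence $\sup_{r\le c}ar$ equals $ac$ for $a\ge0$ and $+\infty$ for $a<0$, with $c=0$ and $c=\log2$ respectively. In case (iv), if $a<0$ I send $r\to-\infty$ with $s$ fixed, which gives $+\infty$; the case $b<0$ is symmetric. For $a,b\ge0$ I substitute $X=e^r$, $Y=e^s$ and maximize $a\log X+b\log Y$ over $X+Y\le2$. Since the objective is monotone, the maximum lies on $X+Y=2$. The concave maximization (the weighted AM--GM/Lagrange step) gives $X=2a/(a+b)$ and $Y=2b/(a+b)$. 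This yields
\[
a\log\tfrac{2a}{a+b}+b\log\tfrac{2b}{a+b},
\]
which expands to the stated formula. The boundary cases $a=0$ or $b=0$ are handled as limits. For example, with $b=0$ the supremum is $a\log2$, approached as $Y\to0$ and $X\to2$; this matches the formula under the convention $0\log0=0$.

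The only delicate points are bookkeeping ones. First, when the maximizer has a zero weight, the supremum is approached but not attained. Second, one must check that the realizing matrices indeed have support exactly $T$, which holds because all the chosen entries are positive. I expect no genuine obstacle, since everything reduces to the earlier lemma.
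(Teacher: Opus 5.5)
Your proposal is correct and follows essentially the same route as the paper: you read each $\cC_T$ off Lemma~\ref{lem:local-lorentzian-tests}, compute the half-line support functions directly, send $r\to-\infty$ for negative weights in case (iv), and maximize $X^aY^b$ on $X+Y\le 2$ at $(2a/(a+b),2b/(a+b))$, with limits for zero weights. The one addition is that you spell out the realizability of every admissible ratio value by taking unit off-diagonal entries, a step the paper leaves as ``immediate.''
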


\begin{proof}
Parts (i)--(iii) are immediate from
Lemma~\ref{lem:local-lorentzian-tests}. In (iv), maximizing $ar+bs$ over
$e^r+e^s\le2$ is equivalent to maximizing $X^aY^b$ over $X+Y\le2$. For
$a,b>0$, the maximizer is
$(X,Y)=(2a/(a+b),2b/(a+b))$, and continuity covers zero exponents. If
$a<0$, for example, the objective becomes unbounded by fixing $s=0$ and
letting $r\to-\infty$. Part (v) is the definition of $\cC$.
\end{proof}

\begin{remark}
For full support, write $h:=h_{\Delta_3^2}$. Then
\[
h(a,b,c)
=
\sigma_{\cC}(a,b,c)
=
\log\sup X^aY^bZ^c,
\]
where the supremum ranges over positive full-support Lorentzian matrices.
Theorem~C of \cite{HHSW} gives $\exp h(a,b,c)$ explicitly when
$a,b,c\ge0$ and $a+b+c=1$. Positive homogeneity extends the formula to the
entire nonnegative orthant: if $s=a+b+c>0$, then
$h(a,b,c)=s h(a/s,b/s,c/s)$.
Together with $h(0,0,0)=0$, this determines $h$ on $\mathbb R_{\ge0}^3$.
Corollary~\ref{cor:local-cones} gives $h(a,b,c)=+\infty$ whenever one
coordinate is negative.
\end{remark}

We use the following standard duality statement to impose compatibility. For
a nonempty convex set $C\subseteq\R^N$, let
$\operatorname{ri}(C)$ denote its relative interior.

\begin{lemma}[Support functions under linear constraints]
\label{lem:support-duality}
Let $C\subseteq\R^N$ be a nonempty closed convex set, and let
$L\subseteq\R^N$ be a linear subspace. If
$\operatorname{ri}(C)\cap L\neq\varnothing$, then, for every
$\lambda\in\R^N$,
\[
\sigma_{C\cap L}(\lambda)
=
\inf_{\eta\in L^\perp}
\sigma_C(\lambda+\eta).
\]
\end{lemma}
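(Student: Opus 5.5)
We obtain the identity as a Fenchel--Rockafellar duality statement and prove it in two inequalities. Write $\delta_L$ for the indicator of $L$, so that $\sigma_{C\cap L}=(\delta_C+\delta_L)^*$ is the conjugate of a sum. We also use $\sigma_C=\delta_C^*$ and $\delta_L^*=\delta_{L^\perp}$. The target formula then says that the conjugate of the sum equals the infimal convolution $\sigma_C\,\square\,\delta_{L^\perp}$, and that the infimum is taken exactly over $\eta\in L^\perp$. The sign convention is harmless because $L^\perp=-L^\perp$.

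The easy inequality $\le$ is a direct computation. For $x\in C\cap L$ and $\eta\in L^\perp$ we have $\langle\eta,x\rangle=0$, so
\[
\langle\lambda,x\rangle=\langle\lambda+\eta,x\rangle\le\sigma_C(\lambda+\eta).
\]
Taking the supremum over $x$ and then the infimum over $\eta$ gives this direction. It holds without any qualification, including when $C\cap L$ is empty, in which case both sides are $-\infty$ under the usual conventions.

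For the reverse inequality, set $s=\sigma_{C\cap L}(\lambda)$. This is finite or $+\infty$, and $C\cap L\neq\varnothing$ by hypothesis. If $s=+\infty$ there is nothing to prove, so assume $s<\infty$. We must produce $\eta\in L^\perp$ with $\sigma_C(\lambda+\eta)\le s$, which also shows that the infimum is attained.

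To find $\eta$, consider the convex sets
\[
A=\{(x,t):x\in C,\ t\le\langle\lambda,x\rangle\},\qquad B=\{(x,t):x\in L,\ t> s\}
\]
in $\R^N\times\R$. They are disjoint, since any $x\in C\cap L$ satisfies $\langle\lambda,x\rangle\le s$. We then apply proper separation in the form of Rockafellar, Theorem 11.3, whose hypothesis is that the relative interiors of $A$ and $B$ are disjoint.

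This yields $(\mu,\kappa)\neq0$ and $c\in\R$ such that $\langle\mu,x\rangle+\kappa t\le c$ on $A$, the reverse inequality holds on $B$, and the separating hyperplane does not contain both sets. We read off the consequences as follows.
\begin{itemize}
\item[(a)] Since $B$ is unbounded upward in $t$, we get $\kappa\ge0$.
\item[(b)] Since $B$ is invariant under translation by $L$, we get $\mu\in L^\perp$.
\item[(c)] The case $\kappa=0$ is excluded. If $\kappa=0$, the hyperplane would separate $C$ from $L$ in the $x$-variables via $\mu\in L^\perp$, giving $\langle\mu,\cdot\rangle\le c$ on $C$ and $\ge c$ on $L$, which forces $c\le0$. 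Properness combined with a point of $\operatorname{ri}(C)\cap L$ then gives a contradiction: $\langle\mu,\cdot\rangle$ attains its maximum over $C$ at a relative interior point, so it is constant on $C$, and hence $C$ and $L$ both lie in the hyperplane $\{\langle\mu,\cdot\rangle=0\}$. This is exactly the step where the relative-interior hypothesis enters, and it is the main obstacle.
\item[(d)] Normalizing $\kappa=1$ and using $\mu\in L^\perp$, the inequality on $B$ gives $c\le s$, since $\langle\mu,x\rangle=0$ on $L$. The inequality on $A$ gives $\langle\lambda+\mu,x\rangle\le c$ for all $x\in C$.
\end{itemize}
Hence $\eta=\mu\in L^\perp$ satisfies $\sigma_C(\lambda+\eta)\le c\le s$.

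Alternatively, one can cite the standard result (Rockafellar, Theorem 16.4 / Corollary 31.2.1) that $(f+g)^*=f^*\,\square\,g^*$ with exact infimal convolution whenever $\operatorname{ri}(\operatorname{dom}f)\cap\operatorname{ri}(\operatorname{dom}g)\neq\varnothing$. With $f=\delta_C$ and $g=\delta_L$, and noting $\operatorname{ri}L=L$, this gives the lemma at once.
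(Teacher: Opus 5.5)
Your proof is correct. Your closing alternative is exactly the paper's proof: the paper applies \cite[Theorem~16.4]{Rockafellar} to $\delta_{C\cap L}=\delta_C+\delta_L$ and notes that $\delta_L^*(-\eta)$ is the indicator of $L^\perp$.

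Your main argument takes a different, self-contained route. It proves the needed special case of that theorem directly, by properly separating $A$ from $B=L\times(s,\infty)$ in $\R^N\times\R$. The shape of $B$ forces $(\mu,\kappa)\in L^\perp\times\R_{\ge0}$. The relative-interior hypothesis is used only to rule out a vertical hyperplane ($\kappa=0$). It does so through the fact that a linear functional maximized over $C$ at a point of $\operatorname{ri}(C)$ is constant on $C$. Steps (a)--(d) all check out.

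Your route shows exactly where the qualification condition enters and exhibits a minimizing $\eta$ as the normal of the separating hyperplane. It also never uses closedness of $C$. The citation buys a one-line proof.

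One side remark in your easy direction is inaccurate, though it plays no role under the lemma's hypothesis. When $C\cap L=\varnothing$, the left-hand side is $-\infty$, but the right-hand side need not be. For instance, take $C=\{(x,y):x>0,\ xy\ge1\}$, $L=\R\times\{0\}$, and $\lambda=0$. Then $\inf_{\eta\in L^\perp}\sigma_C(\eta)=0$. The inequality $\le$ of course still holds trivially in that case.
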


\begin{proof}
Let $\delta_C$ and $\delta_L$ denote the convex indicator functions of $C$
and $L$. The relative-interior hypothesis allows us to apply
\cite[Theorem~16.4]{Rockafellar} to
$\delta_{C\cap L}=\delta_C+\delta_L$, which gives
\[
\sigma_{C\cap L}(\lambda)=
\inf_{\eta\in\R^N}
\bigl(\sigma_C(\lambda+\eta)+\delta_L^*(-\eta)\bigr).
\]
The conjugate $\delta_L^*(-\eta)$ vanishes on $L^\perp$ and is infinite
elsewhere, which proves the formula.
\end{proof}

Fix $d\ge2$ and an $M$-convex support $S\subseteq\Delta_3^d$. To assemble
the ratio coordinates of its slices, retain the local orderings and set
\[
I(S):=\{(\beta,\ell):\beta\in\Delta_3^{d-2},\ 1\le\ell\le m(S_\beta)\},
\qquad
\R^{I(S)}\cong\prod_\beta\R^{m(S_\beta)}.
\]
Define $\rho_S:\R^S\to\R^{I(S)}$ by
\[
(\rho_S(a))_{\beta,\ell}:=
\sum_{\gamma\in S_\beta}(g_{S_\beta,\ell})_\gamma a_{\beta+\gamma}
=\langle\widetilde g_{S_\beta,\ell}^{\,\beta},a\rangle.
\]
Let $C_S:=\prod_{\beta\in\Delta_3^{d-2}}\cC_{S_\beta}$ and
$L_S:=\im(\rho_S)\subseteq\R^{I(S)}$. The product $C_S$ treats the slices
independently, whereas $L_S$ imposes their common coefficient array.

With respect to the standard inner products, the adjoint map is
\[
\rho_S^*(\mu)
=
\sum_{(\beta,\ell)\in I(S)}
\mu_{\beta,\ell}
\widetilde g_{S_\beta,\ell}^{\,\beta},
\qquad
\mu\in\R^{I(S)}.
\]
Thus $L_S^\perp=\ker(\rho_S^*)$. For $f\in\cL_S^+$ with
$a_\alpha=\log c_\alpha$,
\[
(\rho_S(a))_{\beta,\ell}=\log R_{\beta,\ell}(f),\qquad
R_{\beta,\ell}(f):=R_{g_{S_\beta,\ell}}(H_\beta(f)).
\]

\begin{lemma}[A strictly feasible compatible point]
\label{lem:strict-compatible-point}
For $a_\alpha^\circ:=-\lVert\alpha\rVert_2^2$ one has
$\rho_S(a^\circ)\in\operatorname{ri}(C_S)$. Consequently,
$L_S\cap\operatorname{ri}(C_S)\neq\varnothing$.
\end{lemma}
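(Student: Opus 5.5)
Since $C_S$ is a product, its relative interior is the product of the relative interiors $\operatorname{ri}(\cC_{S_\beta})$. I will therefore check, slice by slice, that the vector $(\rho_S(a^\circ))_{\beta,\cdot}=\ell_{S_\beta}(P^\circ_\beta)$ lies in $\operatorname{ri}(\cC_{S_\beta})$. Here $P^\circ_\beta$ is the slice matrix with entries $e^{a^\circ_{\beta+\gamma}}$. The second sentence of the lemma then follows, because $\rho_S(a^\circ)\in\im(\rho_S)=L_S$ by definition.

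The key computation is already in the proof of Theorem~\ref{thm:extreme-rays}. For every local generator $g$ on a slice $S_\beta$, its extension $\widetilde g^{\,\beta}$ satisfies $\langle\widetilde g^{\,\beta},a^\circ\rangle=-2$. The reason is that $\lVert\beta+\gamma\rVert^2=\lVert\beta\rVert^2+2\langle\beta,\gamma\rangle+\lVert\gamma\rVert^2$. The generator has coefficient sum zero, which kills the $\lVert\beta\rVert^2$ term. Each generator is also a difference of two pairs with equal sums (e.g.\ $2e_i+(e_j+e_k)=(e_i+e_j)+(e_i+e_k)$, and $2e_1+2e_2=2(e_1+e_2)$ for $u$), which kills the linear term. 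What remains is $\sum_\gamma g_\gamma\lVert\gamma\rVert^2$: for $x,y,z$ this is $-(4+2-2-2)=-2$, and for $u$ it is $-(4+4-2\cdot2)=-4$. I will verify these four cases directly; the exact value is irrelevant, only its strict negativity matters. Hence every local log-ratio of $P^\circ_\beta$ is strictly negative, i.e.\ every local generator ratio is $<1$.

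Next I go through the list in Lemma~\ref{lem:local-support-functions}. For supports with $m(T)=0$, $\cC_T=\{0\}=\R^0$ and there is nothing to check. For $\{11,12,22\}$ and $\{11,12,13,23\}$, $\cC_T$ is a half-line $(-\infty,0]$ or $(-\infty,\log 2]$, whose interior contains any negative number. For the five-point support, the interior is $\{e^r+e^s<2\}$, which contains any $(r,s)$ with $r,s<0$. For full support, I need $(x,y,z)<0$ to lie in the interior of $\cC$. By the proof of Lemma~\ref{lem:row-convex}, $\cC$ is the closure of $\{z\le\phi(x,y)\}$ over $e^x+e^y<2$, so its interior contains $\{e^x+e^y<2,\ z<\phi(x,y)\}$. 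If $X,Y<1$, then $\phi(x,y)>0$, since $2-X-Y>1-XY\iff(1-X)(1-Y)>0$. So $z<0<\phi(x,y)$, and the point is interior. Here $\cC$ is full-dimensional, so its relative interior coincides with its interior.

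The only real obstacle is this full-support interior claim. It requires passing correctly from the closed description to an open condition, which the factorization $2-X-Y-(1-XY)=(1-X)(1-Y)$ handles. Alternatively, all defining inequalities $XY<1$, $XZ<1$, $YZ<1$ and $F>0$ hold strictly at the point by the factorization in Corollary~\ref{cor:local-cones}, so by continuity a neighborhood lies in $\cC$.
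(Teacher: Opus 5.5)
Your proposal is correct and takes essentially the paper's route. You compute the local log-ratios at $a^\circ$ ($-4$ for the $u$-type generator, $-2$ for the triangular ones), check slice by slice that they lie in the relative interiors of the local feasible sets (for full support, via strict versions of the defining inequalities or your $\phi(x,y)>0$ variant), and use that relative interiors commute with finite products.
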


\begin{proof}
On $a^\circ$, every $U$-type generator has value $-4$, and every triangular
generator has value $-2$. These log-ratios belong to the
relative interiors in Lemma~\ref{lem:local-support-functions}. For the
five-point support, this follows from $e^{-2}+e^{-2}<2$. For full support,
the ratio-log triple $(-2,-2,-2)$ satisfies all principal-minor inequalities
strictly and satisfies $e^{-6}+2-3e^{-2}>0$. Hence it lies in
$\operatorname{int}(\cC)$. The zero-dimensional factor also contains its
unique point in its relative interior. Since relative interiors commute with
finite products,
$\rho_S(a^\circ)\in\operatorname{ri}(C_S)\cap L_S$.
\end{proof}

Define the global feasible set of local log-ratio data by
\[
\cK_S
:=
\left\{
\rho_S\bigl((\log c_\alpha)_{\alpha\in S}\bigr):
\sum_{\alpha\in S}
c_\alpha\frac{\mathbf x^\alpha}{\alpha!}
\in\cL_S^+
\right\}
\subseteq\R^{I(S)}.
\]

\begin{lemma}[The feasible set for arbitrary support]
\label{lem:general-feasible}
We have $\cK_S=C_S\cap L_S$.
\end{lemma}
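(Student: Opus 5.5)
The plan is to prove the two inclusions separately. Throughout we use the Hessian-slice characterization together with the definition $\cC_T=\{\ell_T(P):P\text{ Lorentzian with support }T\}$.

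For $\cK_S\subseteq C_S\cap L_S$, take $f\in\cL_S^+$ with coefficient logs $a_\alpha=\log c_\alpha$. Then $\rho_S(a)$ lies in $L_S=\im(\rho_S)$ by definition. For each $\beta$, the block $(\rho_S(a))_{\beta,\ell}$, $1\le\ell\le m(S_\beta)$, equals $\ell_{S_\beta}(H_\beta(f))$. Since $f$ is Lorentzian, $H_\beta(f)$ is a Lorentzian matrix, and its support is exactly $S_\beta$ because $c_\alpha>0$ precisely on $S$. So each block lies in $\cC_{S_\beta}$, and hence $\rho_S(a)\in C_S$. For slices with $S_\beta=\emptyset$ the block is the empty tuple in $\R^0=\cC_{S_\beta}$, so these slices cause no trouble.

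For the reverse inclusion, let $r\in C_S\cap L_S$ and choose any $a\in\R^S$ with $\rho_S(a)=r$; this is possible since $r\in\im(\rho_S)$. Set $c_\alpha=e^{a_\alpha}$ and let $f$ be the corresponding polynomial with support $S$. It remains to show that every $H_\beta(f)$ is Lorentzian. The key point is that Lorentzianity of a matrix with support $T$ depends only on its local generator ratios $\ell_T(P)$. This follows from Lemma~\ref{lem:local-lorentzian-tests}, where each criterion is phrased purely in terms of $U$, $X$, $Y$, $Z$ (or imposes no condition at all). Up to the index permutation fixed by the chosen ordering, these ratios are exactly the generators listed in Corollary~\ref{cor:local-cones}.

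Now $r_\beta\in\cC_{S_\beta}$ means $r_\beta=\ell_{S_\beta}(P)$ for some Lorentzian $P$ with support $S_\beta$. Since $\ell_{S_\beta}(H_\beta(f))=r_\beta$, the matrix $H_\beta(f)$ has the same ratio values as $P$, and so it satisfies the same criterion of Lemma~\ref{lem:local-lorentzian-tests}. It is therefore Lorentzian. By the Hessian-slice characterization (the support $S$ is $M$-convex by hypothesis), $f\in\cL_S^+$, and so $r=\rho_S(a)\in\cK_S$.

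The only point needing care is the claim that the Lorentzian criterion is a function of $\ell_T$ alone. For full support, the criterion in Lemma~\ref{lem:local-lorentzian-tests}(v) is in $X,Y,Z$, and $\ell_T=(\log X,\log Y,\log Z)$ in the order $i=1,2,3$. The determinant identity used there, $\det P/(p_{12}p_{13}p_{23})=XYZ+2-X-Y-Z$, is valid for every positive $P$. Cases (ii)–(iv) are similar, and case (i) imposes no condition. I expect this bookkeeping to be the main, though mild, obstacle: one must check that the permutation used to bring $S_\beta$ into standard form is the same one used to define $g_{S_\beta,\ell}$, so that the criterion transfers verbatim.
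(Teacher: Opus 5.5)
Your proposal is correct and follows the paper's proof. For the forward inclusion, each block of $\rho_S(a)$ is the local ratio vector of a Lorentzian slice with support exactly $S_\beta$. For the reverse inclusion, $r\in L_S=\im(\rho_S)$ lifts to a coefficient array, and the criteria of Lemma~\ref{lem:local-lorentzian-tests} depend only on the local generator ratios, so every slice of the resulting polynomial is Lorentzian.

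Your remarks on the support of $H_\beta(f)$, on empty slices, and on using the same index permutation for the criterion and for $g_{S_\beta,\ell}$ make explicit what the paper's short proof leaves implicit.
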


\begin{proof}
If $f\in\cL_S^+$ and $a_\alpha=\log c_\alpha$, every slice is Lorentzian.
Hence $\rho_S(a)\in C_S\cap L_S$.

Conversely, let $r\in C_S\cap L_S$. Choose
$a=(a_\alpha)_{\alpha\in S}$ with $\rho_S(a)=r$, set
$c_\alpha=e^{a_\alpha}$, and let $f$ be the corresponding factorially
normalized polynomial. For every $\beta$, the local log-ratio vector of
$H_\beta(f)$ is the $\beta$-block $r_\beta\in\cC_{S_\beta}$. The explicit
criteria in Lemma~\ref{lem:local-lorentzian-tests} show that every slice is
Lorentzian. Since $f$ has support $S$, we obtain $f\in\cL_S^+$ and
$r\in\cK_S$.
\end{proof}

For $\mu\in\R^{I(S)}$, set
\[
w_\mu:=\rho_S^*(\mu)
=
\sum_{(\beta,\ell)\in I(S)}
\mu_{\beta,\ell}
\widetilde g_{S_\beta,\ell}^{\,\beta}
\in\R^S.
\]
If $\lambda\in\R_{\ge0}^{I(S)}$, then
$w_\lambda\in\BR(\cL_S^+)$ by Theorem~\ref{thm:local-global}. Its monomial
is the corresponding weighted product of the local ratios.
Write $\lambda_\beta=(\lambda_{\beta,1},\ldots,
\lambda_{\beta,m(S_\beta)})$, and let $H_S(\lambda)$ be the logarithm of the
corresponding global sharp constant:
\[
H_S(\lambda)
:=
\log B_{\cL_S^+}(w_\lambda)
=
\log\sup_{f\in\cL_S^+}
\prod_{(\beta,\ell)\in I(S)}
R_{\beta,\ell}(f)^{\lambda_{\beta,\ell}}.
\]

\begin{theorem}[Optimal bounding constants for arbitrary support]
\label{thm:general-optimal}
For every $\lambda\in\R_{\ge0}^{I(S)}$,
\[
H_S(\lambda)
=
\inf_{\eta\in L_S^\perp}
\sum_{\beta\in\Delta_3^{d-2}}
h_{S_\beta}\bigl(\lambda_\beta+\eta_\beta\bigr),
\]
where $\eta_\beta$ denotes the $\beta$-block of
$\eta\in\R^{I(S)}$.
\end{theorem}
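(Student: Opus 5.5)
The plan is to rewrite $H_S(\lambda)$ as a support function of the feasible set $\cK_S$ and then apply Lemma~\ref{lem:support-duality}. For $f\in\cL_S^+$ with $a_\alpha=\log c_\alpha$, we have
\[
\log\prod_{(\beta,\ell)\in I(S)}R_{\beta,\ell}(f)^{\lambda_{\beta,\ell}}
=\sum_{(\beta,\ell)}\lambda_{\beta,\ell}(\rho_S(a))_{\beta,\ell}
=\langle\lambda,\rho_S(a)\rangle .
\]
Taking the supremum over $f$, and using that $\rho_S(a)$ ranges exactly over $\cK_S$, gives $H_S(\lambda)=\sigma_{\cK_S}(\lambda)$. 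By Lemma~\ref{lem:general-feasible} this equals $\sigma_{C_S\cap L_S}(\lambda)$.

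Next I verify the hypotheses of Lemma~\ref{lem:support-duality} with $C=C_S$ and $L=L_S$. The set $C_S$ is nonempty because each factor contains $0$ or the point from Lemma~\ref{lem:strict-compatible-point}. It is convex because each factor $\cC_T$ is convex: this is trivial or a half-line in cases (i)--(iii), convex since $e^r+e^s$ is convex in case (iv), and convex by Lemma~\ref{lem:row-convex} for full support. Closedness of each factor follows the same way: the half-lines are closed, $\{e^r+e^s\le2\}$ is closed, and $\cC$ is closed by Lemma~\ref{lem:row-convex}. Since $C_S$ is a finite product, it is closed and convex. The relative-interior condition $\operatorname{ri}(C_S)\cap L_S\neq\varnothing$ is exactly Lemma~\ref{lem:strict-compatible-point}. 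Lemma~\ref{lem:support-duality} therefore gives
\[
H_S(\lambda)=\inf_{\eta\in L_S^\perp}\sigma_{C_S}(\lambda+\eta).
\]

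Finally I split $\sigma_{C_S}$ across the slices. The support function of a product is the sum of the support functions of its factors, with the convention that any $+\infty$ term makes the sum $+\infty$. This is valid because each factor is nonempty, so every summand lies in $(-\infty,+\infty]$ and the suprema over the independent blocks add. Hence $\sigma_{C_S}(\lambda+\eta)=\sum_\beta h_{S_\beta}(\lambda_\beta+\eta_\beta)$. For blocks with $m(S_\beta)=0$ the term is $h(0)=0$, which matches the convention in Lemma~\ref{lem:local-support-functions}(i).

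I expect the main obstacle to be purely bookkeeping. The duality lemma must be applicable even though $\sigma_C$ may take the value $+\infty$. Rockafellar's Theorem~16.4 covers this, since it is stated for proper closed convex functions and it also asserts that the infimum is attained when finite. The other point to check is that $L_S^\perp=\ker\rho_S^*$ is taken with respect to the same block inner product used to define the blocks $\eta_\beta$, which holds by construction.
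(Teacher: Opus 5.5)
Your proposal is correct and follows the paper's proof essentially step for step. You identify $H_S(\lambda)=\sigma_{\cK_S}(\lambda)$ and rewrite $\cK_S=C_S\cap L_S$ via Lemma~\ref{lem:general-feasible}. You then check closedness, convexity, and the relative-interior condition using Lemmas~\ref{lem:local-support-functions}, \ref{lem:row-convex}, and \ref{lem:strict-compatible-point}, apply Lemma~\ref{lem:support-duality}, and split $\sigma_{C_S}$ over the product; your remarks on $+\infty$ values and nonempty factors correctly spell out details the paper leaves implicit.
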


\begin{proof}
If $r$ denotes the local log-ratio data of $f$, then
$\log R_{w_\lambda}(f)=\langle\lambda,r\rangle$, and hence
$H_S(\lambda)=\sigma_{\cK_S}(\lambda)$. Lemmas~\ref{lem:general-feasible}
and \ref{lem:strict-compatible-point} identify $\cK_S=C_S\cap L_S$ and
verify the relative-interior hypothesis. Lemmas~\ref{lem:local-support-functions}
and \ref{lem:row-convex} show that $C_S$ is closed and convex. Applying
Lemma~\ref{lem:support-duality} and using the product structure of $C_S$
gives
\[
H_S(\lambda)
=
\inf_{\eta\in L_S^\perp}\sigma_{C_S}(\lambda+\eta)
=
\inf_{\eta\in L_S^\perp}
\sum_{\beta\in\Delta_3^{d-2}}
h_{S_\beta}(\lambda_\beta+\eta_\beta).
\]
\end{proof}

\begin{remark}
Since $L_S^\perp=\ker(\rho_S^*)$, every vector $\lambda+\eta$ in the
infimum represents the same global exponent as $\lambda$. Thus the formula
minimizes the sum of the local sharp bounds over all local representations of
the same global ratio.
\end{remark}

For full support, write $I_d:=\Delta_3^{d-2}\times[3]$,
$\rho_d:=\rho_{\Delta_3^d}$, $L_d:=\im(\rho_d)$, and
$H_d:=H_{\Delta_3^d}$.
Every slice then has full support and support function $h$.

\begin{corollary}[Full support]
\label{cor:full-support-optimal}
For every
$\lambda=(\lambda_{\beta,i})_{(\beta,i)\in I_d}\in\R_{\ge0}^{I_d}$,
\[
H_d(\lambda)
=
\inf_{\eta\in L_d^\perp}
\sum_{\beta\in\Delta_3^{d-2}}
h\bigl(
\lambda_{\beta,1}+\eta_{\beta,1},
\lambda_{\beta,2}+\eta_{\beta,2},
\lambda_{\beta,3}+\eta_{\beta,3}
\bigr).
\]
\end{corollary}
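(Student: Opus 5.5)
This is a direct specialization of Theorem~\ref{thm:general-optimal} to $S=\Delta_3^d$, so the plan is to check that every ingredient of that formula takes the stated form for full support and then substitute.

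First, for $S=\Delta_3^d$ and any $\beta\in\Delta_3^{d-2}$, every pair $e_i+e_j$ satisfies $\beta+e_i+e_j\in\Delta_3^d$. Hence $S_\beta=\Delta_3^2$ for every $\beta$. By Corollary~\ref{cor:local-cones}, the local generators are $x,y,z$, in the order $i=1,2,3$ fixed at the start of Section~\ref{sec:optimal-constants}. Their extensions are exactly $g_{\beta,1},g_{\beta,2},g_{\beta,3}$, as in the proof of Corollary~\ref{cor:full-support}. So $m(S_\beta)=3$ for all $\beta$, and the index set $I(S)$ is $\Delta_3^{d-2}\times[3]=I_d$. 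It follows that $\rho_S=\rho_d$ and $L_S=L_d$, hence $L_S^\perp=L_d^\perp$, and $H_S=H_d$ by the definitions introduced just before the corollary.

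Second, by Lemma~\ref{lem:local-support-functions}(v), each local feasible set is $\cC_{\Delta_3^2}=\cC$. Therefore $h_{S_\beta}=\sigma_{\cC}=h$ for every $\beta$, with the block $\lambda_\beta+\eta_\beta$ read as the triple $(\lambda_{\beta,1}+\eta_{\beta,1},\lambda_{\beta,2}+\eta_{\beta,2},\lambda_{\beta,3}+\eta_{\beta,3})$.

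Substituting these identifications into the formula of Theorem~\ref{thm:general-optimal} gives the displayed identity for every $\lambda\in\R_{\ge0}^{I_d}$. The hypotheses of that theorem are independent of the support: the relative-interior condition is Lemma~\ref{lem:strict-compatible-point}, and closedness of $\cC$ is Lemma~\ref{lem:row-convex}. No step here is a genuine obstacle. The only care needed is bookkeeping: the coordinate order in $h$ must match the ordering of generators $i=1,2,3$, so that the $\beta$-block of $\lambda+\eta$ is paired with $(x,y,z)$ correctly.
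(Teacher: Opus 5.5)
Your proposal is correct and follows the paper's proof, which is the one-line specialization of Theorem~\ref{thm:general-optimal} to $S=\Delta_3^d$. Your extra bookkeeping is accurate: $S_\beta=\Delta_3^2$, the generators $x,y,z$ extend to $g_{\beta,1},g_{\beta,2},g_{\beta,3}$, $I(S)=I_d$, and $h_{S_\beta}=h$.
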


\begin{proof}
This is Theorem~\ref{thm:general-optimal} with $S=\Delta_3^d$.
\end{proof}

\section{Compatibility constraints}\label{sec:compatibility-constraints}
\label{sec:compatibility}

The full-support formula in Corollary~\ref{cor:full-support-optimal} depends
on the compatibility space $L_d^\perp$. We first compute its dimension and
then exhibit all of its relations.

\subsection{Number of compatibility constraints}
\label{subsec:compatibility-dimension}

\begin{lemma}[Kernel of the ratio map]
\label{lem:kernel-rho}
For $d\ge2$,
\[
\ker(\rho_d)
=
\left\{
(c_1\alpha_1+c_2\alpha_2+c_3\alpha_3)_{\alpha\in\Delta_3^d}:
c_1,c_2,c_3\in\R
\right\}.
\]
In particular, $\dim\ker(\rho_d)=3$.
\end{lemma}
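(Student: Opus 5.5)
We prove the two inclusions separately and then count dimensions.

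\emph{Linear arrays lie in the kernel.} Let $a_\alpha=c_1\alpha_1+c_2\alpha_2+c_3\alpha_3$. Each generator $g_{\beta,i}$ pairs $a$ with the multiset difference
\[
(\beta+2e_i)+(\beta+e_j+e_k)-(\beta+e_i+e_j)-(\beta+e_i+e_k).
\]
Its coordinate vector is zero, since both sides sum to $4\beta+2e_i+2e_j+2e_k$ coordinatewise. Because $a$ is linear in $\alpha$, $\langle g_{\beta,i},a\rangle=0$. Hence the displayed space is contained in $\ker(\rho_d)$.

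\emph{Every kernel element is linear.} Suppose $\rho_d(a)=0$, so every rhombus relation
\[
a_{\beta+2e_i}+a_{\beta+e_j+e_k}=a_{\beta+e_i+e_j}+a_{\beta+e_i+e_k}
\]
holds. First subtract the linear array $c_1\alpha_1+c_2\alpha_2+c_3\alpha_3$ chosen to match $a$ at the three vertices $de_1,de_2,de_3$; this uses $c_i=a_{de_i}/d$. We may then assume $a$ vanishes at the three corners, and we show $a\equiv0$ by induction.

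For $\beta$ fixed, the three relations at the slice $\beta$ say the $3\times 3$ symmetric array $(a_{\beta+e_i+e_j})$ is additive, i.e.\ of the form $u_i+u_j$. Indeed, the three relations are
\[
a_{ii}+a_{jk}=a_{ij}+a_{ik},
\]
and the six unknowns modulo the three-dimensional space $\{u_i+u_j\}$ are cut out by exactly these three independent equations. So on each small triangle $\beta+\Delta_3^2$ the array agrees with an affine-linear function of $\alpha$.

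Adjacent slices $\beta,\beta'=\beta-e_i+e_j$ share the points $\beta+e_j+e_k$ with $k\in\{1,2,3\}$, three points not on a line. Two affine functions agreeing at three affinely independent points of the plane $|\alpha|=d$ coincide. Since $\Delta_3^{d-2}$ is connected under such moves, all local affine functions agree. Thus $a$ restricts to a single affine function on the plane $|\alpha|=d$, hence to a linear combination of $\alpha_1,\alpha_2,\alpha_3$, because the constant equals $(\alpha_1+\alpha_2+\alpha_3)/d$ there. This proves the reverse inclusion directly, and the normalization step is not actually needed.

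\emph{Dimension.} The three coordinate functions are linearly independent on $\Delta_3^d$ for $d\ge1$: evaluating at the corners $de_i$ recovers each $c_i$. Hence $\dim\ker(\rho_d)=3$.

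The main obstacle is the gluing step: verifying that adjacent slices share three non-collinear points. With that checked, the argument reduces to the local algebra at a single slice.
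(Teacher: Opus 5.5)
Your proof is correct, but it takes a different route from the paper.

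\textbf{The paper's route.} It passes to planar coordinates $b_{i,j}=a_{(i,j,d-i-j)}$ and rewrites the three rhombus relations at each $\beta$ as equalities among the forward differences $P_{i,j}=b_{i+1,j}-b_{i,j}$ and $Q_{i,j}=b_{i,j+1}-b_{i,j}$. Propagating these equalities along the grid shows the discrete gradient $(P,Q)$ is constant, so $b_{i,j}=b_{0,0}+pi+qj$.

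\textbf{Your route.} You solve the kernel one slice at a time. The three independent relations on $\R^6$ cut out exactly the three-dimensional space of additive arrays $u_i+u_j$, i.e.\ restrictions of affine functions. You then glue: neighboring slices $\beta$ and $\beta-e_i+e_j$ share the three affinely independent points $\beta+e_j+e_k$, and $\Delta_3^{d-2}$ is connected under such moves.

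\textbf{Comparison.} Your version is coordinate-free and mirrors the local-to-global structure of the rest of the paper. The paper's version is more explicit: it produces the constants $c_i$ directly and avoids the local dimension count, since its propagation is a concrete walk along the difference grid. You also make explicit the injectivity behind $\dim\ker(\rho_d)=3$, by evaluating at the corners $de_i$, which the paper leaves implicit.

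\textbf{Two cleanups.} First, your opening normalization (subtracting a linear array to kill the corners, with an announced induction) is never used. Delete it rather than retracting it at the end. Second, add one line noting that for $d\ge2$ every $\alpha\in\Delta_3^d$ lies in some $\beta+\Delta_3^2$. This is what guarantees the glued affine function agrees with $a$ at every point.
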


\begin{proof}
Every displayed function lies in $\ker(\rho_d)$ because rhombus second
differences annihilate linear functions.

Conversely, suppose that $\rho_d(a)=0$. Write
\[
b_{i,j}:=a_{(i,j,d-i-j)}
\qquad
(i,j\ge0,\ i+j\le d),
\]
and, for $i+j\le d-1$, set
\[
P_{i,j}:=b_{i+1,j}-b_{i,j},
\qquad
Q_{i,j}:=b_{i,j+1}-b_{i,j}.
\]
For $i+j\le d-2$, the three equations at
$\beta=(i,j,d-2-i-j)$ give
\[
P_{i,j+1}=P_{i,j},
\qquad
Q_{i+1,j}=Q_{i,j},
\qquad
P_{i+1,j}=P_{i,j+1},
\qquad
Q_{i,j+1}=Q_{i+1,j}.
\]
The equations make all $P_{i,j}$ equal to a common value $p$ and all
$Q_{i,j}$ equal to a common value $q$. Hence
$b_{i,j}=b_{0,0}+pi+qj$. Since $\alpha_1+\alpha_2+\alpha_3=d$, setting
$c_3=b_{0,0}/d$, $c_1=p+c_3$, and $c_2=q+c_3$ gives
$a_\alpha=c_1\alpha_1+c_2\alpha_2+c_3\alpha_3$.
\end{proof}

\begin{corollary}[Number of independent compatibility constraints]
\label{cor:dual-dimension}
For $d\ge2$,
\[
\dim L_d^\perp=(d-1)(d-2).
\]
\end{corollary}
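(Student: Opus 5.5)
This is a dimension count built on Lemma~\ref{lem:kernel-rho}. By definition $L_d=\im(\rho_d)$ is a subspace of $\R^{I_d}$, and the orthogonal complement is taken with respect to the standard inner product. Hence
\[
\dim L_d^\perp=|I_d|-\dim L_d .
\]
It therefore suffices to compute $|I_d|$ and $\dim L_d$.

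First, $|I_d|=3|\Delta_3^{d-2}|=3\binom{d}{2}$, since there are three generators for each $\beta\in\Delta_3^{d-2}$. Next, $\rho_d$ is a linear map with domain $\R^{\Delta_3^d}$, and $\dim\R^{\Delta_3^d}=\binom{d+2}{2}$. Rank--nullity together with Lemma~\ref{lem:kernel-rho} gives
\[
\dim L_d=\binom{d+2}{2}-3.
\]

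Subtracting, we obtain
\[
\dim L_d^\perp=\frac{3d(d-1)}{2}-\frac{(d+2)(d+1)}{2}+3=\frac{2d^2-6d+4}{2}=(d-1)(d-2).
\]
As a check, the case $d=2$ gives $0$, which matches the fact that the single slice is unconstrained.

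There is no real obstacle here; the only substantive input is the kernel computation, which Lemma~\ref{lem:kernel-rho} already supplies. The point to state explicitly is that this kernel has dimension exactly $3$. This holds because the three linear functions $\alpha\mapsto\alpha_i$ are linearly independent on $\Delta_3^d$ when $d\ge1$: for instance, they take independent values at the three vertices $de_1$, $de_2$, $de_3$.
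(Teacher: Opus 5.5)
Your proof is correct and follows the paper's own argument: rank--nullity with Lemma~\ref{lem:kernel-rho} gives $\dim L_d=\binom{d+2}{2}-3$. Subtracting this from $|I_d|=3\binom d2$ gives $(d-1)(d-2)$.
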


\begin{proof}
Lemma~\ref{lem:kernel-rho} and rank-nullity give
$\dim L_d=\binom{d+2}{2}-3$, whereas
$\dim\R^{I_d}=3|\Delta_3^{d-2}|=3\binom d2$. Hence
\[
\dim L_d^\perp
=
3\binom d2
-
\left(\binom{d+2}{2}-3\right)
=
(d-1)(d-2).
\]
\end{proof}

\subsection{Explicit compatibility relations}
\label{subsec:explicit-compatibility}

For $d\ge3$, set $J_d:=\Delta_3^{d-3}\times\{1,2\}$ and define
$\kappa_d:\R^{I_d}\to\R^{J_d}$ by
\[
\begin{aligned}
(\kappa_dr)_{\gamma,1}
&=
r_{\gamma+e_3,1}
-r_{\gamma+e_2,1}
+r_{\gamma+e_1,3}
-r_{\gamma+e_2,3},
\\
(\kappa_dr)_{\gamma,2}
&=
r_{\gamma+e_3,2}
-r_{\gamma+e_1,2}
+r_{\gamma+e_2,3}
-r_{\gamma+e_1,3},
\end{aligned}
\]
for $\gamma\in\Delta_3^{d-3}$.

\begin{theorem}[Explicit compatibility theorem]
\label{thm:explicit-compatibility}
For $d\ge3$,
\[
L_d=\ker(\kappa_d).
\]
Equivalently, $r=(r_{\beta,i})_{(\beta,i)\in I_d}\in\R^{I_d}$ arises from a
global coefficient-log array exactly when $\kappa_dr=0$. The
$2|\Delta_3^{d-3}|=(d-1)(d-2)$ displayed relations are linearly independent.
Moreover, there is an exact sequence
\[
0
\longrightarrow
\R^3
\xrightarrow{\iota_d}
\R^{\Delta_3^d}
\xrightarrow{\rho_d}
\R^{I_d}
\xrightarrow{\kappa_d}
\R^{J_d}
\longrightarrow
0,
\]
where
\[
\iota_d(c_1,c_2,c_3)
=
\bigl(
c_1\alpha_1+c_2\alpha_2+c_3\alpha_3
\bigr)_{\alpha\in\Delta_3^d}.
\]
\end{theorem}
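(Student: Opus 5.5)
The plan is to prove three things: $\kappa_d\circ\rho_d=0$, surjectivity of $\kappa_d$, and a dimension count. Exactness at the first two spots is already Lemma~\ref{lem:kernel-rho}, together with the obvious injectivity of $\iota_d$ for $d\ge1$: evaluating at $de_1,de_2,de_3$ recovers $c_1,c_2,c_3$.

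\emph{Step 1: $\kappa_d\circ\rho_d=0$.} Fix $\gamma\in\Delta_3^{d-3}$ and a coefficient-log array $a$. I substitute
\[
r_{\beta,i}=a_{\beta+2e_i}+a_{\beta+e_j+e_k}-a_{\beta+e_i+e_j}-a_{\beta+e_i+e_k}
\]
into $(\kappa_d\rho_d a)_{\gamma,1}$. Every term then involves $a_{\gamma+\delta}$ with $\delta\in\Delta_3^3$, so this is a finite identity among the ten points of a cubic triangle. Writing out the sixteen terms should show that they cancel in pairs. For example, $r_{\gamma+e_3,1}$ contributes $a_{\gamma+2e_1+e_3}$ and $-a_{\gamma+e_1+e_2+e_3}$, and these are matched by terms of $r_{\gamma+e_1,3}$ and $-r_{\gamma+e_2,1}$. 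The second relation follows from the first by the transposition $1\leftrightarrow2$. This transposition swaps $r_{\cdot,1}$ with $r_{\cdot,2}$ and $e_1$ with $e_2$, and it maps the first formula to the second up to sign. It therefore suffices to verify one cubic identity, which is routine. This gives $L_d=\im\rho_d\subseteq\ker\kappa_d$.

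\emph{Step 2: $\kappa_d$ is surjective, i.e.\ the relations are independent.} I would exhibit a triangular structure. Order $\gamma\in\Delta_3^{d-3}$, and observe that the coordinate $r_{\gamma+e_3,1}$ appears in $(\kappa_d r)_{\gamma',1}$ only for $\gamma'=\gamma$. Indeed, $r_{\beta,1}$ appears in $(\kappa_dr)_{\gamma',1}$ with $\beta=\gamma'+e_3$ or $\gamma'+e_2$, so other occurrences would need $\gamma'+e_2=\gamma+e_3$. To avoid this clash I choose an order in which $\gamma$ is decreasing in the $e_3$-coordinate, so that the pivot of each row is new. Similarly, $r_{\gamma+e_3,2}$ serves as a pivot for $(\kappa_dr)_{\gamma,2}$, since $r_{\cdot,2}$ terms appear only in second-type relations. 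With a lexicographic order by $\gamma_3$, the matrix of $\kappa_d$ restricted to these pivot columns is triangular with unit diagonal. Hence $\operatorname{rank}\kappa_d=2|\Delta_3^{d-3}|=(d-1)(d-2)$.

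\emph{Step 3: dimension count.} By Corollary~\ref{cor:dual-dimension}, $\dim L_d=3\binom d2-(d-1)(d-2)$. Step 2 gives $\dim\ker\kappa_d=3\binom d2-(d-1)(d-2)$ as well. Combined with the inclusion from Step 1, this yields $L_d=\ker\kappa_d$, which completes exactness.

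The main obstacle is the bookkeeping in Steps 1 and 2. In Step 1 one must check the sixteen-term cancellation carefully. In Step 2 one must make sure the chosen pivots really do not recur in other relations of the same type. If the $\gamma_3$ ordering fails, I would fall back on a different independence argument: show that $\kappa_d^*$ is injective by evaluating a putative kernel vector $\mu$ at the extreme coordinate $r_{\gamma+e_3,1}$ with $\gamma$ of maximal third coordinate, and then induct downward.
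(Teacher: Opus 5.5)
Your proof is correct, but it takes a different route from the paper's.

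\textbf{Your route.} You verify the easy inclusion $\im\rho_d\subseteq\ker\kappa_d$ by the ten-point cancellation. The sixteen terms do cancel in pairs, and the transposition $1\leftrightarrow2$ carries the first relation exactly onto the second, not merely up to sign. You then prove $\operatorname{rank}\kappa_d=(d-1)(d-2)$ by a pivot argument. Finally, you obtain equality from the count $\dim L_d=3\binom d2-(d-1)(d-2)$ in Corollary~\ref{cor:dual-dimension}, which rests on Lemma~\ref{lem:kernel-rho}.

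\textbf{The paper's route.} The paper works on the dual side.
\begin{enumerate}
\item It writes $L_d^\perp=\ker\Psi_d$, where $\Psi_d(\mathbf e_{\beta,i})=g_{\beta,i}$.
\item It identifies $g_{\beta,i}$ with $\mathbf x^\beta Q_i$, where $Q_i=\prod_{j\ne i}(x_i-x_j)$.
\item It computes the full syzygy module of $(Q_1,Q_2,Q_3)$ after substituting $u=x_1-x_2$, $v=x_2-x_3$. This module is free on two linear syzygies $s_1,s_2$.
\item Hence $\{\mathbf x^\gamma s_1,\mathbf x^\gamma s_2\}$ is a basis of $L_d^\perp$, and these vectors are exactly the rows of $\kappa_d$.
\end{enumerate}

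\textbf{Comparison.} The paper's argument derives the relations rather than checking given ones. It yields inclusion, spanning and independence at once, with no dimension count; Lemma~\ref{lem:kernel-rho} is needed only for exactness at $\R^{\Delta_3^d}$. Your argument is more elementary and purely linear-algebraic. The cost is that you need the relations in advance and you lean on the earlier kernel computation to close the gap.

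\textbf{A wording fix in Step 2.} The pivot $r_{\gamma+e_3,1}$ does recur, in row $(\gamma+e_3-e_2,1)$ whenever $\gamma_2\ge1$. Likewise $r_{\gamma+e_3,2}$ recurs in row $(\gamma+e_3-e_1,2)$ whenever $\gamma_1\ge1$. So with your decreasing-$\gamma_3$ order, a row's pivot has already appeared in an earlier row, and it is not ``new'' in the forward sense.

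Your conclusion still holds, for two reasons. Every recurrence lies in a row whose third coordinate is exactly one larger. Also, type-1 rows involve only columns $(\cdot,1),(\cdot,3)$ and type-2 rows only $(\cdot,2),(\cdot,3)$, so the pivot submatrix is block diagonal. Each block is unitriangular under any ordering by $\gamma_3$. Your fallback argument is therefore unnecessary.
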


\begin{proof}
Let $V_d=\R^{\Delta_3^d}$, and let $g_{\beta,i}\in V_d$ be the exponent
vector of $R_{\beta,i}$. Define
\[
\Psi_d:\R^{I_d}\longrightarrow V_d,
\qquad
\Psi_d(\mathbf e_{\beta,i})=g_{\beta,i}.
\]
Because $(\rho_d(a))_{\beta,i}=\langle g_{\beta,i},a\rangle$, we have
\[
\rho_d=\Psi_d^*,
\qquad
L_d^\perp=\ker(\Psi_d).
\]

Identify $V_d$ with the homogeneous degree-$d$ polynomials via
$\mathbf e_\alpha\mapsto\mathbf x^\alpha$. Then
$g_{\beta,i}\leftrightarrow\mathbf x^\beta Q_i$, where
\[
\begin{aligned}
Q_1&=(x_1-x_2)(x_1-x_3),
&Q_2&=(x_2-x_1)(x_2-x_3),
\\
Q_3&=(x_3-x_1)(x_3-x_2).
\end{aligned}
\]
With $u=x_1-x_2$ and $v=x_2-x_3$, these become
\[
Q_1=u(u+v),
\qquad
Q_2=-uv,
\qquad
Q_3=v(u+v).
\]

A relation among the $g_{\beta,i}$ has the form
\[
F_1Q_1+F_2Q_2+F_3Q_3=0,
\]
where $F_1,F_2,F_3$ are homogeneous of degree $d-2$. Reducing this relation
modulo $u$ and modulo $v$ shows, respectively, that $u\mid F_3$ and
$v\mid F_1$. Thus $F_1=vA$ and $F_3=uC$
for homogeneous polynomials $A,C$ of degree
$d-3$, and substitution gives $F_2=(A+C)(u+v)$. Hence
\[
(F_1,F_2,F_3)=-A(-v,0,u)-(A+C)(0,-(u+v),-u),
\]
so every relation is generated by $s_1=(-v,0,u)$ and
$s_2=(0,-(u+v),-u)$. These two syzygies are independent over
$\R[x_1,x_2,x_3]$: the first two components of $Ms_1+Ns_2=0$ force
$M=N=0$.
Consequently,
\[
\left\{
\mathbf x^\gamma s_1,\,
\mathbf x^\gamma s_2:
\gamma\in\Delta_3^{d-3}
\right\}
\]
is a basis of $\ker(\Psi_d)$.

In the original indexing, the two relations are
\begin{align*}
g_{\gamma+e_3,1}
-g_{\gamma+e_2,1}
+g_{\gamma+e_1,3}
-g_{\gamma+e_2,3}
&=0,
\\
g_{\gamma+e_3,2}
-g_{\gamma+e_1,2}
+g_{\gamma+e_2,3}
-g_{\gamma+e_1,3}
&=0.
\end{align*}
These are the rows of $\kappa_d$, so
$\operatorname{row}(\kappa_d)=L_d^\perp$. Their independence gives
$\ker(\kappa_d)=L_d$. Since their number is
$2|\Delta_3^{d-3}|=\dim\R^{J_d}$, the map $\kappa_d$ is surjective.
Finally, Lemma~\ref{lem:kernel-rho} identifies
$\ker(\rho_d)=\im(\iota_d)$, and $\iota_d$ is injective. This proves
exactness.
\end{proof}

Theorem~\ref{thm:explicit-compatibility} replaces the abstract constraint
$\eta\in L_d^\perp$ with explicit variables. We adopt the boundary convention
$\mu_\alpha=\nu_\alpha=0$ for $\alpha\notin\Delta_3^{d-3}$. For
$\mu,\nu\in\R^{\Delta_3^{d-3}}$, the adjoint
$\kappa_d^*:\R^{J_d}\to\R^{I_d}$ is
\[
\begin{aligned}
(\kappa_d^*(\mu,\nu))_{\beta,1}
&=
\mu_{\beta-e_3}-\mu_{\beta-e_2},
\\
(\kappa_d^*(\mu,\nu))_{\beta,2}
&=
\nu_{\beta-e_3}-\nu_{\beta-e_1},
\\
(\kappa_d^*(\mu,\nu))_{\beta,3}
&=
\mu_{\beta-e_1}-\mu_{\beta-e_2}
+\nu_{\beta-e_2}-\nu_{\beta-e_1}.
\end{aligned}
\]

\begin{corollary}[Explicit arbitrary-degree formula]
\label{cor:explicit-general-formula}
Let $d\ge3$ and
$\lambda=(\lambda_{\beta,i})_{(\beta,i)\in I_d}\in\R_{\ge0}^{I_d}$.
Then
\[
\begin{aligned}
H_d(\lambda)
=
\inf_{\mu,\nu\in\R^{\Delta_3^{d-3}}}
\sum_{\beta\in\Delta_3^{d-2}}
h\Bigl(
&\lambda_{\beta,1}
+\mu_{\beta-e_3}-\mu_{\beta-e_2},
\\
&\lambda_{\beta,2}
+\nu_{\beta-e_3}-\nu_{\beta-e_1},
\\
&\lambda_{\beta,3}
+\mu_{\beta-e_1}-\mu_{\beta-e_2}
+\nu_{\beta-e_2}-\nu_{\beta-e_1}
\Bigr).
\end{aligned}
\]
\end{corollary}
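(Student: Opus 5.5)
This formula should follow directly by substituting the explicit description of $L_d^\perp$ from Theorem~\ref{thm:explicit-compatibility} into Corollary~\ref{cor:full-support-optimal}. The plan has three steps.

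\textbf{Step 1: identify $L_d^\perp$ with $\im(\kappa_d^*)$.} Theorem~\ref{thm:explicit-compatibility} gives $L_d=\ker(\kappa_d)$. Standard linear algebra for finite-dimensional spaces with their standard inner products then yields
\[
L_d^\perp=\ker(\kappa_d)^\perp=\im(\kappa_d^*).
\]
Hence every $\eta\in L_d^\perp$ has the form $\eta=\kappa_d^*(\mu,\nu)$ for some $\mu,\nu\in\R^{\Delta_3^{d-3}}$. Conversely, every such vector lies in $L_d^\perp$. The map $\kappa_d$ is surjective, so $\kappa_d^*$ is injective and the parametrization is bijective, but only surjectivity onto $L_d^\perp$ is needed here.

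\textbf{Step 2: verify the adjoint formula.} This is the one computation to carry out carefully. Pair $\kappa_d r$ with $(\mu,\nu)$:
\[
\sum_{\gamma}\mu_\gamma(\kappa_dr)_{\gamma,1}+\sum_\gamma\nu_\gamma(\kappa_dr)_{\gamma,2}.
\]
Reindex each term by $\beta=\gamma+e_k$, so that $\gamma=\beta-e_k$, and collect the coefficient of each $r_{\beta,i}$.

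The coefficient of $r_{\beta,1}$ comes from $+r_{\gamma+e_3,1}$ and $-r_{\gamma+e_2,1}$ in the first row. It equals $\mu_{\beta-e_3}-\mu_{\beta-e_2}$.

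The coefficient of $r_{\beta,2}$ similarly equals $\nu_{\beta-e_3}-\nu_{\beta-e_1}$.

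The coefficient of $r_{\beta,3}$ collects $\mu_{\beta-e_1}-\mu_{\beta-e_2}$ from the first row and $\nu_{\beta-e_2}-\nu_{\beta-e_1}$ from the second.

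The boundary convention $\mu_\alpha=\nu_\alpha=0$ for $\alpha\notin\Delta_3^{d-3}$ exactly accounts for those $\beta$ for which $\beta-e_k$ has a negative entry. Such $\gamma$ simply do not occur in the sum. This confirms the displayed formula for $\kappa_d^*$.

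\textbf{Step 3: substitute into the variational formula.} Put $\eta=\kappa_d^*(\mu,\nu)$ into the infimum of Corollary~\ref{cor:full-support-optimal}. Because the parametrization covers all of $L_d^\perp$, the infimum over $\eta\in L_d^\perp$ becomes the infimum over $(\mu,\nu)\in\R^{\Delta_3^{d-3}}\times\R^{\Delta_3^{d-3}}$. Writing out the $\beta$-blocks of $\lambda+\kappa_d^*(\mu,\nu)$ gives exactly the three arguments of $h$ in the statement.

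Terms where some argument is negative contribute $+\infty$, by the remark following Lemma~\ref{lem:support-duality}. They therefore do not affect the infimum, and no additional constraint on $(\mu,\nu)$ is needed. The only real care required anywhere is the index bookkeeping in Step~2.
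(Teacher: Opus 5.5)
Your proposal is correct and is the paper's proof: read off $L_d^\perp=\im(\kappa_d^*)$ from Theorem~\ref{thm:explicit-compatibility}, then substitute $\eta=\kappa_d^*(\mu,\nu)$ into Corollary~\ref{cor:full-support-optimal}. Your Step~2 also checks the adjoint formula, which the paper states without proof, and your index bookkeeping there is right; one small slip is that the convention $h=+\infty$ off the nonnegative orthant appears in the remark after Lemma~\ref{lem:local-support-functions}, not after Lemma~\ref{lem:support-duality}, and the argument does not need it.
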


\begin{proof}
Theorem~\ref{thm:explicit-compatibility} gives
$L_d^\perp=\im(\kappa_d^*)$. Substituting
$\eta=\kappa_d^*(\mu,\nu)$ into
Corollary~\ref{cor:full-support-optimal} gives the formula.
\end{proof}

\section{The full-support cubic case}
\label{sec:cubic}

A full-support cubic has three quadratic slices and nine local ratios.
Corollary~\ref{cor:dual-dimension} gives $\dim L_3^\perp=2$, allowing an
explicit description of both compatibility and the sharp constants.

\subsection{Cubic compatibility and optimal bounding constants}
\label{subsec:cubic-compatibility}

For
\[
f(\mathbf x)
=
\sum_{\alpha\in\Delta_3^3}
c_\alpha\frac{\mathbf x^\alpha}{\alpha!}
\in\cL_{3,3}^+,
\]
write $c_{ijk}:=c_{(i,j,k)}$ and abbreviate $R_{e_i,j}$ by $R_{ij}$. The
nine ratios are
\begin{align*}
R_{11}&=\frac{c_{111}c_{300}}{c_{210}c_{201}},
&
R_{12}&=\frac{c_{201}c_{120}}{c_{210}c_{111}},
&
R_{13}&=\frac{c_{210}c_{102}}{c_{201}c_{111}},
\\
R_{21}&=\frac{c_{021}c_{210}}{c_{120}c_{111}},
&
R_{22}&=\frac{c_{111}c_{030}}{c_{120}c_{021}},
&
R_{23}&=\frac{c_{120}c_{012}}{c_{111}c_{021}},
\\
R_{31}&=\frac{c_{012}c_{201}}{c_{111}c_{102}},
&
R_{32}&=\frac{c_{102}c_{021}}{c_{111}c_{012}},
&
R_{33}&=\frac{c_{111}c_{003}}{c_{102}c_{012}}.
\end{align*}
The $i$th row $(R_{i1},R_{i2},R_{i3})$ records the ratios of
$H_{e_i}(f)$.

Let
\[
\mathcal R_{\mathrm{loc}}
:=
\left\{
(X,Y,Z)\in\R_{>0}^3:
XY,XZ,YZ\le1,\quad
XYZ+2-X-Y-Z\ge0
\right\}.
\]
This set is the full-support quadratic feasible region in ratio coordinates,
and $\cC$ is its coordinatewise logarithm.

\begin{lemma}[Cubic compatibility and lifting]
\label{lem:lifting}
Let $R_{ij}>0$ for $1\le i,j\le3$.

\begin{enumerate}[label=(\roman*)]
\item
The nine numbers $R_{ij}$ arise from a positive cubic coefficient array if
and only if
\[
R_{13}R_{31}=R_{21}R_{23},
\qquad
R_{23}R_{32}=R_{12}R_{13}.
\]

\item
They arise from a polynomial $f\in\cL_{3,3}^+$ if and only if the two
relations in {\rm (i)} hold and
$(R_{i1},R_{i2},R_{i3})\in\mathcal R_{\mathrm{loc}}$ for $i=1,2,3$.
\end{enumerate}
\end{lemma}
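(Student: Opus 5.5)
Part (i) is the case $d=3$ of Theorem~\ref{thm:explicit-compatibility}, written multiplicatively. For $d=3$ we have $\Delta_3^{0}=\{0\}$, so $J_3$ has exactly two elements. The map $\kappa_3$ therefore consists of two linear relations on the log-ratios $r_{\beta,i}=\log R_{\beta,i}$, with $\beta=e_1,e_2,e_3$.

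Substituting $\gamma=0$ in the definition of $\kappa_d$ gives the first relation
\[
r_{e_3,1}-r_{e_2,1}+r_{e_1,3}-r_{e_2,3}=0,
\]
that is, $R_{31}R_{13}=R_{21}R_{23}$. It also gives the second relation
\[
r_{e_3,2}-r_{e_1,2}+r_{e_2,3}-r_{e_1,3}=0,
\]
that is, $R_{32}R_{23}=R_{12}R_{13}$. These are exactly the two displayed identities. By Theorem~\ref{thm:explicit-compatibility}, $L_3=\ker(\kappa_3)$. Hence a vector $r\in\R^{I_3}$ equals $\rho_3(a)$ for some $a\in\R^{\Delta_3^3}$ if and only if both relations hold.

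Exponentiating, with $c_\alpha=e^{a_\alpha}$, turns any real array $a$ into a positive coefficient array and back again. This proves (i). As a sanity check, I would verify the two identities directly from the nine displayed formulas. For example, both $R_{13}R_{31}$ and $R_{21}R_{23}$ simplify to $c_{210}c_{012}/c_{111}^2$. This confirms that the indexing $R_{ij}=R_{e_i,j}$ matches the convention in $\kappa_d$.

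For (ii), I would use Lemma~\ref{lem:general-feasible} with $S=\Delta_3^3$, which gives $\cK_S=C_S\cap L_S$. Here $C_S=\cC^3$, since all three slices have full support. By Lemma~\ref{lem:local-lorentzian-tests}(v), the condition $(\log R_{i1},\log R_{i2},\log R_{i3})\in\cC$ is equivalent to $(R_{i1},R_{i2},R_{i3})\in\mathcal R_{\mathrm{loc}}$. The condition $r\in L_S$ is handled by part (i).

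Concretely, the argument runs as follows.
\begin{itemize}
\item For necessity: each Hessian slice $H_{e_i}(f)$ is Lorentzian with full support, so its row of ratios lies in $\mathcal R_{\mathrm{loc}}$, and the compatibility relations hold by (i).
\item For sufficiency: lift the nine ratios to a positive coefficient array using (i). The $i$th slice of the resulting cubic then has ratio triple $(R_{i1},R_{i2},R_{i3})$, so it is Lorentzian by Lemma~\ref{lem:local-lorentzian-tests}(v).
\item Since the support is full and therefore $M$-convex, the Hessian-slice characterization gives $f\in\cL_{3,3}^+$.
\end{itemize}

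The only delicate point is the index bookkeeping in (i): matching $\kappa_3$ at $\gamma=0$ with the abbreviations $R_{ij}$. The direct cancellation check above settles it. Everything else is a direct specialization of earlier results.
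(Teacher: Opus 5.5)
Your proposal is correct and follows the paper's proof: part (i) is the $d=3$, $\gamma=0$ case of Theorem~\ref{thm:explicit-compatibility}, exponentiated, and part (ii) combines Lemma~\ref{lem:local-lorentzian-tests}(v) with the Hessian-slice characterization. Your direct cancellation check of the indexing is a useful addition that the paper omits; for instance, both sides of the first relation simplify to $c_{210}c_{012}/c_{111}^2$.
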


\begin{proof}
Set $r_{ij}=\log R_{ij}$. Theorem~\ref{thm:explicit-compatibility} gives
$r\in L_3=\im(\rho_3)$ exactly when
\[
r_{13}+r_{31}-r_{21}-r_{23}=0,
\qquad
r_{23}+r_{32}-r_{12}-r_{13}=0.
\]
Exponentiating yields the relations in (i), while membership in
$\im(\rho_3)$ is precisely the existence of a coefficient-log array with
these ratios.

For (ii), Lemma~\ref{lem:local-lorentzian-tests} makes a slice Lorentzian
exactly when its row belongs to $\mathcal R_{\mathrm{loc}}$. The
Hessian-slice characterization then proves the claim.
\end{proof}

The two compatibility relations also give
$R_{12}R_{21}=R_{31}R_{32}$.

We order the ratios by
\[
(R_1,\ldots,R_9)
:=
(R_{11},R_{12},R_{13},
 R_{21},R_{22},R_{23},
 R_{31},R_{32},R_{33}),
\]
and use the same indexing for $\lambda=(\lambda_1,\ldots,\lambda_9)$.

\begin{theorem}[Cubic optimal bounding-constant formula]
\label{thm:cubic-optimal}
For every $\lambda\in\R_{\ge0}^9$,
\[
\begin{aligned}
H_3(\lambda)
=
\inf_{\mu,\nu\in\R}
\Bigl[
&
h\bigl(
\lambda_1,
\lambda_2-\nu,
\lambda_3+\mu-\nu
\bigr)
\\
&+
h\bigl(
\lambda_4-\mu,
\lambda_5,
\lambda_6-\mu+\nu
\bigr)
\\
&+
h\bigl(
\lambda_7+\mu,
\lambda_8+\nu,
\lambda_9
\bigr)
\Bigr].
\end{aligned}
\]
\end{theorem}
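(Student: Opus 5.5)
The plan is to specialize Corollary~\ref{cor:explicit-general-formula} to $d=3$ and check that the resulting expression is exactly the one displayed. For $d=3$ we have $\Delta_3^{d-3}=\Delta_3^0=\{0\}$, so the compatibility variables $\mu,\nu\in\R^{\Delta_3^0}$ reduce to two real scalars $\mu=\mu_0$ and $\nu=\nu_0$. The slices are indexed by $\beta\in\Delta_3^1=\{e_1,e_2,e_3\}$. Under the ordering $(R_1,\ldots,R_9)$ fixed above, the block $\lambda_\beta$ with $\beta=e_i$ is $(\lambda_{3i-2},\lambda_{3i-1},\lambda_{3i})$. This matches the convention that row $i$ of the array $R_{ij}$ records the ratios of $H_{e_i}(f)$, since $R_{ij}=R_{e_i,j}$.

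The only real work is evaluating the adjoint $\kappa_3^*(\mu,\nu)$ block by block, using the boundary convention that $\mu_\alpha=\nu_\alpha=0$ for $\alpha\notin\Delta_3^0$. For each $\beta=e_i$, the terms $\mu_{\beta-e_k}$ and $\nu_{\beta-e_k}$ survive only when $k=i$, in which case they equal $\mu$ or $\nu$. The three blocks then work out as follows.

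\begin{itemize}
\item For $\beta=e_1$, the first entry is $\mu_{e_1-e_3}-\mu_{e_1-e_2}=0$ and the second is $\nu_{e_1-e_3}-\nu_0=-\nu$. The third is $\mu_0-\mu_{e_1-e_2}+\nu_{e_1-e_2}-\nu_0=\mu-\nu$.
\item For $\beta=e_2$, the entries are $(-\mu,\,0,\,-\mu+\nu)$.
\item For $\beta=e_3$, the entries are $(\mu,\,\nu,\,0)$.
\end{itemize}

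Adding these shifts to $\lambda$ gives exactly the three arguments of $h$ in the statement. As a sanity check, these shifts are precisely the coefficient vectors of the two cubic relations $r_{13}+r_{31}-r_{21}-r_{23}=0$ and $r_{23}+r_{32}-r_{12}-r_{13}=0$ from Lemma~\ref{lem:lifting}(i), taken with weights $\mu$ and $-\nu$ respectively. So $\{\kappa_3^*(\mu,\nu)\}$ is indeed all of $L_3^\perp$, which has dimension $2$ by Corollary~\ref{cor:dual-dimension}.

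Since Corollary~\ref{cor:explicit-general-formula} already contains the duality argument (Theorem~\ref{thm:general-optimal} together with the relative-interior Lemma~\ref{lem:strict-compatible-point}), no further analysis is needed. The main obstacle is purely bookkeeping: keeping the signs and the boundary convention straight in the three $\beta$-blocks, and matching the slice index $\beta=e_i$ with the row index of $R_{ij}$. I would double-check this by confirming that the shift vector $\eta$ satisfies $\rho_3^*(\eta)=0$ directly from the nine explicit ratios.
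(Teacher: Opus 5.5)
Your proposal is correct and is essentially the paper's proof: you specialize Corollary~\ref{cor:explicit-general-formula} to $d=3$, where $\mu,\nu$ become scalars and $\kappa_3^*(\mu,\nu)$ shifts the three rows by $(0,-\nu,\mu-\nu)$, $(-\mu,0,-\mu+\nu)$, $(\mu,\nu,0)$, exactly as you computed. The one slip is in your side remark, and it does not affect the argument: the shifts are $\mu$ times the first cubic relation plus $\nu$ (not $-\nu$) times the second, as the $r_{12}$-entry $-\nu$ shows.
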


\begin{proof}
For $d=3$, the two variables in
Corollary~\ref{cor:explicit-general-formula} reduce to scalars $\mu$ and
$\nu$. They change the three rows by
\[
(0,-\nu,\mu-\nu),
\qquad
(-\mu,0,-\mu+\nu),
\qquad
(\mu,\nu,0).
\]
Substituting these corrections into
Corollary~\ref{cor:explicit-general-formula} proves the formula.
\end{proof}

\subsection{Two-generator sections}
\label{subsec:cubic-two-generator}

We now determine the sharp constant on every two-generator section of the
cubic bounded-ratio cone.

\begin{theorem}[Two-generator classification]
\label{thm:cubic-two-generator}
Let $R_{ij}$ and $R_{k\ell}$ be two distinct cubic local ratios, and let
$a,b\ge0$. Then exactly one of the following cases occurs.

\begin{enumerate}[label=(\roman*)]
\item
If the two ratios lie in the same row, then
\[
\sup_{f\in\cL_{3,3}^+}
R_{ij}(f)^aR_{k\ell}(f)^b
=
2^{a+b}\frac{a^ab^b}{(a+b)^{a+b}}.
\]

\item
If they form a transposed off-diagonal pair $R_{ij},R_{ji}$ with $i\ne j$,
then
\[
\sup_{f\in\cL_{3,3}^+}
R_{ij}(f)^aR_{ji}(f)^b
=
2^{|a-b|}.
\]

\item
In every other case,
\[
\sup_{f\in\cL_{3,3}^+}
R_{ij}(f)^aR_{k\ell}(f)^b
=
2^{a+b}.
\]
\end{enumerate}
Here $0^0=1$.
\end{theorem}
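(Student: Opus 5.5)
The plan is to get each upper bound from Theorem~\ref{thm:cubic-optimal} with a suitable choice of $(\mu,\nu)$, and each lower bound from an explicit family of cubics. Write $\lambda$ for the weight vector with $a$ and $b$ in the two chosen positions and zeros elsewhere.

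\emph{Step 1: two-coordinate values of $h$.} First I would evaluate $h$ on vectors with at most two nonzero coordinates. Let $\lambda$ be supported on $\{1,2\}$. Letting $Z\to0$ in Lemma~\ref{lem:local-lorentzian-tests}(v) leaves the constraints $X+Y\le2$ and $XY\le1$. The second is implied by the first through AM--GM. Conversely, the proof of Lemma~\ref{lem:row-convex} shows $X+Y\le2$ on all of $\mathcal R_{\mathrm{loc}}$. Hence $h(a,b,0)$ equals the five-point support function of Lemma~\ref{lem:local-support-functions}(iv):
\[
h(a,b,0)=(a+b)\log2+a\log a+b\log b-(a+b)\log(a+b).
\]
In particular $h(a,0,0)=a\log2$ and $h(b,b,0)=0$. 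Cyclic symmetry gives the analogous values for the other coordinate pairs.

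\emph{Step 2: upper bounds.} Cases (i) and (iii) need no compatibility correction. Taking $\mu=\nu=0$ in Theorem~\ref{thm:cubic-optimal} gives $h$ of the weighted rows. In case (i) this is one term $h(a,b,0)$, up to permutation, which is the stated constant. In case (iii) it is at most $h(a,0,0)+h(0,b,0)=(a+b)\log2$. This also follows directly from the bound $X,Y,Z\le2$ in each slice. For case (ii), by the $S_3$-symmetry it suffices to treat the pair $R_{12},R_{21}$, i.e.\ $\lambda_2=a$ and $\lambda_4=b$, and we may assume $a\ge b$. Choosing $\mu=\nu=b$ turns the three rows into
\[
(0,a-b,0),\qquad (0,0,0),\qquad (b,b,0).
\]
All entries are nonnegative, so the sum is $(a-b)\log2+h(b,b,0)=(a-b)\log2$. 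This gives the bound $2^{|a-b|}$, which can also be seen from the identity $R_{12}R_{21}=R_{31}R_{32}\le1$.

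\emph{Step 3: matching lower bounds.} This is the main obstacle. I would construct explicit families in $\cL_{3,3}^+$, checked with Lemma~\ref{lem:lifting}(ii). The idea is to prescribe the nine ratios directly, subject to the two relations in Lemma~\ref{lem:lifting}(i), using degenerations $\varepsilon\to0$ in which unweighted ratios tend to $0$.
\begin{itemize}[label=$\circ$]
\item Case (i): put row $i$ at $(2a/(a+b),\,2b/(a+b),\,\varepsilon)$. Choose the other rows with small entries and solve the relations for the remaining ratios. The difficulty is that $R_{13}R_{31}=R_{21}R_{23}$ and $R_{23}R_{32}=R_{12}R_{13}$ force some ratios in other rows to be products. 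I would make every ratio not yet fixed a monomial in $\varepsilon$ with exponents chosen so that each row stays in $\mathcal R_{\mathrm{loc}}$ as $\varepsilon\to0$.
\item Case (ii): take $R_{12}\to2$ and $R_{21}=R_{31}R_{32}/R_{12}\to\tfrac12$, for instance with $R_{31}=R_{32}=1$. The remaining ratios are then fixed consistently.
\item Case (iii): make both chosen ratios tend to $2$ with all other entries of their rows tending to $0$. I would reduce by $S_3$-symmetry to a few orbit representatives: two off-diagonal ratios in different rows that are not transposed, a diagonal ratio with an off-diagonal one, and two diagonal ratios. For each representative I would check that the relations can be met with the other ratios bounded and feasible.
\end{itemize}
If a hand construction is awkward, the fallback is to use the convexity of $\mathscr A_S$ (Theorem~\ref{thm:log-convexity}) and the recession directions of Theorem~\ref{thm:recession-duality}. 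Starting from the point $a^\circ$, I would push along directions $q$ with $\langle g,q\rangle\le0$ for all generators except the targeted ones, then approach the boundary. Finally, exclusivity of the three cases is immediate: same row, transposed pair, and neither are disjoint conditions.
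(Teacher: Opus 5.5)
Your upper bounds are fine. $h(a,b,0)$ is indeed the five-point support function, and the choice $\mu=\nu=b$ in Theorem~\ref{thm:cubic-optimal} is just the logarithmic form of the identity $R_{12}R_{21}=R_{31}R_{32}\le1$ that the paper uses directly. The gap is the sharpness half: Step~3 leaves it as a plan, and the one explicit choice it makes fails.

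In case (i), the row $(2a/(a+b),2b/(a+b),\varepsilon)$ lies on $X+Y=2$. Its determinant expression is therefore $\varepsilon(XY-1)<0$ whenever $a\ne b$, and it has a zero entry when $ab=0$. You must approach $(2a/(a+b),2b/(a+b))$ from the region $X+Y<2$ and take $0<Z\le\min\{X^{-1},Y^{-1},(2-X-Y)/(1-XY)\}$.

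More seriously, you never show that the other two rows can be chosen feasible and compatible with a prescribed row. In case (iii) this is genuine case work. Your three families split into five $S_3$-orbits: $\{R_{11},R_{22}\}$, $\{R_{11},R_{21}\}$, $\{R_{11},R_{23}\}$, $\{R_{12},R_{32}\}$, $\{R_{12},R_{23}\}$. The relations of Lemma~\ref{lem:lifting} treat these differently. For the pair $\{R_{11},R_{23}\}$ with rows $(A_t,t,t)$ and $(t,t,A_t)$, where $A_t=2/(1+t)$, they force $R_{31}=A_t\to2$ and $R_{32}=t^2/A_t$. So the third row is not automatically small and must be checked.

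The fallback does not fill the gap. A direction $q$ with $\langle g,q\rangle\le0$ for every generator is a recession direction (Theorem~\ref{thm:recession-duality}), and along it no generator ratio increases. A direction that raises the targeted ratios leaves $\mathscr A_S$ at a finite time, and nothing in your plan controls the targeted values at the exit point.

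What is missing is a concrete realization mechanism, which the paper supplies in two pieces:
\begin{itemize}
\item Any $(X,Y,Z)\in\mathcal R_{\mathrm{loc}}$ lifts to a Lorentzian cubic using the rows $(\varepsilon Z,1,1)$ and $(\varepsilon,YZ,1)$ with $0<\varepsilon\le\min\{1,Z^{-1}\}$. This closes case (i).
\item Exactly feasible degenerating rows give a compatible triple for case (ii) and for each of the five representatives in case (iii). These are $E_i(t)$ ($A_t$ in position $i$, $t$ elsewhere), $F(t)=(A_t^{-1},t,A_tt)$, $G(t)=(1,1,t)$, and $K(t)=(A_t,t^2/A_t,t)$. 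Your choice $R_{31}=R_{32}=1$ in case (ii) is exactly the triple $E_2(t),F(t),G(t)$.
\end{itemize}

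Your small-entries idea can also be completed. Rows with all entries at most $1$ are feasible by Corollary~\ref{cor:unit-local-ratios}, and $(A_t,t^k,t^m)$ is feasible for $k,m\ge1$. Choosing different powers of $t$ then keeps every untargeted ratio at most $1$. But these verifications are the whole content of the lower bounds, and as written the proposal proves only the inequalities ``$\le$''.
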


\begin{proof}
We first show that any locally feasible row extends to a Lorentzian cubic.
Fix $(X,Y,Z)\in\mathcal R_{\mathrm{loc}}$, choose $\varepsilon>0$ with
$\varepsilon\le1$ and $\varepsilon Z\le1$, and take the other two rows to be
\[
(\varepsilon Z,1,1),
\qquad
(\varepsilon,YZ,1).
\]
Their determinant expressions are $0$ and
$(1-\varepsilon)(1-YZ)\ge0$, respectively, and all pairwise products are at
most $1$. The identities $Z\varepsilon=(\varepsilon Z)\cdot1$ and
$YZ=Y\cdot Z$ verify compatibility. Lemma~\ref{lem:lifting} therefore
realizes the prescribed row.

We will also use several explicit degenerations. For $0<t\le1$, set
$A_t:=2/(1+t)$ and define
\[
\begin{gathered}
E_1(t)=(A_t,t,t),\qquad
E_2(t)=(t,A_t,t),\qquad
E_3(t)=(t,t,A_t),\\
F(t)=(A_t^{-1},t,A_tt),\qquad
G(t)=(1,1,t),\qquad
K(t)=\left(A_t,\frac{t^2}{A_t},t\right).
\end{gathered}
\]
All six rows belong to $\mathcal R_{\mathrm{loc}}$. The determinant
expression vanishes on $E_i(t)$ and $G(t)$, while its values on $F(t)$ and
$K(t)$ are
\[
\frac{(1-t)^2(2t+3)}{2(1+t)}
\qquad\text{and}\qquad
\frac{t(1-t)^2(t+2)}{2(1+t)},
\]
respectively. Both values are nonnegative, and the product inequalities
follow by substitution.

For (i), denote the selected ratios by $X,Y$ and the remaining ratio by $Z$.
Local feasibility gives
\[
0\le XYZ+2-X-Y-Z
=
2-X-Y-Z(1-XY),
\]
so $XY\le1$ implies $X+Y\le2$.

Conversely, suppose that $X,Y>0$ and $X+Y<2$. Then $XY<1$, and any choice
of
\[
0<Z\le
\min\left\{
X^{-1},Y^{-1},
\frac{2-X-Y}{1-XY}
\right\}
\]
produces a row in $\mathcal R_{\mathrm{loc}}$. The extension above realizes
this row in a Lorentzian cubic. Continuity therefore reduces the desired
supremum to
\[
\sup_{\substack{X,Y>0\\X+Y\le2}}X^aY^b.
\]
For $a,b>0$, the maximum occurs at
$(X,Y)=(2a/(a+b),2b/(a+b))$ and equals the value in (i). Continuity covers
zero exponents.

For (ii), it suffices by symmetry to consider $(R_{12},R_{21})$.
Compatibility gives $R_{12}R_{21}=R_{31}R_{32}\le1$, while each factor is
at most $2$. If $a\ge b$, then
$R_{12}^aR_{21}^b=R_{12}^{a-b}(R_{12}R_{21})^b\le2^{a-b}$. The case
$b\ge a$ is symmetric.

Suppose $a\ge b$. The rows $E_2(t),F(t),G(t)$ are locally feasible and
satisfy the two compatibility relations with common values $t$ and $A_tt$.
Lemma~\ref{lem:lifting} realizes them. As $t\to0^+$, the selected ratios tend
to $2$ and $1/2$, so their weighted product tends to $2^{a-b}$. Interchanging
the indices proves sharpness when $b\ge a$.

For (iii), the individual bounds give $R_{ij}^aR_{k\ell}^b\le2^{a+b}$.
It remains to approach equality. Up to a simultaneous permutation of the
indices, the admissible pairs have five representatives:
\[
\{R_{11},R_{21}\},\quad
\{R_{11},R_{22}\},\quad
\{R_{12},R_{23}\},\quad
\{R_{12},R_{32}\},\quad
\{R_{11},R_{23}\}.
\]
Indeed, two diagonal ratios form one type. After excluding same-row pairs, a
pair with exactly one diagonal ratio is represented by either
$\{R_{11},R_{21}\}$ or $\{R_{11},R_{23}\}$. After also excluding transposed
pairs, two off-diagonal ratios either share a target or form a directed path.

For these representatives, choose the three rows in the following table.
Here $C_1$ and $C_2$ denote the common values in the first and second
compatibility relations.
\[
\begin{array}{c|ccc|c}
\text{selected pair}
&
 \text{row }1
&
 \text{row }2
&
 \text{row }3
&
(C_1,C_2)
\\ \hline
\{R_{11},R_{21}\}
&E_1(t)&E_1(t)&E_1(t)&(tA_t,t^2)
\\
\{R_{11},R_{22}\}
&E_1(t)&E_2(t)&E_3(t)&(t^2,t^2)
\\
\{R_{12},R_{23}\}
&E_2(t)&E_3(t)&E_1(t)&(tA_t,tA_t)
\\
\{R_{12},R_{32}\}
&E_2(t)&E_2(t)&E_2(t)&(t^2,tA_t)
\\
\{R_{11},R_{23}\}
&E_1(t)&E_3(t)&K(t)&(tA_t,t^2)
\end{array}
\]
Each line of the table gives three locally feasible and compatible rows, so
Lemma~\ref{lem:lifting} realizes them. Both selected ratios equal $A_t$ and
tend to $2$ as $t\to0^+$. This proves sharpness in the remaining cases.
\end{proof}

\section{Applications and extremal comparisons}
\label{sec:applications}

We now compare the universal Lorentzian bounds with two structured
subclasses. Volume polynomials attain one Lorentzian optimum but exhibit a
strict gap for another functional. Rank-three matroids satisfy an even
stronger inequality through the Rayleigh property.

\subsection{A Lorentzian extremum realized by mixed volumes}
\label{subsec:app-no-loss}

For a full-support Lorentzian cubic, define
\[
\Phi_{\mathrm{diag}}(f)
:=
R_{11}R_{22}R_{33}
=
\frac{
c_{111}^3c_{300}c_{030}c_{003}
}{
c_{210}c_{201}c_{120}c_{021}c_{102}c_{012}
}.
\]
Let $\mathcal V_3^3$ be the full-support volume polynomials
\[
F_K(x_1,x_2,x_3)=\operatorname{Vol}(x_1K_1+x_2K_2+x_3K_3)
\]
of triples of convex bodies in $\R^3$. They are Lorentzian by
\cite[Theorem~4.1]{BH}. In factorial normalization, Minkowski's polynomial
formula identifies $c_\alpha$ with six times the mixed volume containing
$\alpha_i$ copies of $K_i$ for each $i\in[3]$
\cite[Section~5.1]{Schneider}.

\begin{proposition}[No geometric loss]
\label{prop:app-no-loss}
We have
\[
\sup_{F\in\mathcal V_3^3}\Phi_{\mathrm{diag}}(F)
=
\sup_{f\in\cL_{3,3}^+}\Phi_{\mathrm{diag}}(f)
=
8.
\]
\end{proposition}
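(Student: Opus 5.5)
The identity will follow from an upper bound valid on all of $\cL_{3,3}^+$ and a family of convex bodies that attains it in the limit. For the upper bound, each factor $R_{ii}$ of $\Phi_{\mathrm{diag}}$ is the triangular generator $X$, $Y$ or $Z$ of the slice $H_{e_i}(f)$, taken at the diagonal entry $2e_i$. Corollary~\ref{cor:local-cones} and its proof give $X+Y\le 2$, $X+Z\le2$ and $Y+Z\le2$ on $\mathcal R_{\mathrm{loc}}$, so each generator is at most $2$. Hence $\Phi_{\mathrm{diag}}(f)\le 8$ for every $f\in\cL_{3,3}^+$. Since $\mathcal V_3^3\subseteq\cL_{3,3}^+$ by \cite[Theorem~4.1]{BH}, the chain
\[
\sup_{\mathcal V_3^3}\Phi_{\mathrm{diag}}\le\sup_{\cL_{3,3}^+}\Phi_{\mathrm{diag}}\le 8
\]
holds. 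It therefore suffices to find triples of convex bodies with $\Phi_{\mathrm{diag}}(F_K)\to8$.

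The Lorentzian extremal configuration tells us what to aim for. In the table in the proof of Theorem~\ref{thm:cubic-two-generator}, the pair $\{R_{11},R_{22}\}$ is realized by the rows $E_1(t),E_2(t),E_3(t)$. In that configuration all three diagonal ratios equal $A_t\to2$, all off-diagonal ratios equal $t\to0$, and every slice determinant vanishes. So we want bodies whose mixed-volume array behaves like this row data. Concretely, $K_i$ should be nearly one-dimensional in a direction $v_i$, so that $c_{111}$ is of order $1$. The remaining coefficients should then scale as $c_{300}\sim\varepsilon^2$, $c_{210}\sim\varepsilon$, and so on, with leading constants chosen so that every ratio $c_{111}c_{300}/(c_{210}c_{201})$ tends to $2$ rather than to some smaller number.

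The first ansatz I would test is axis-parallel boxes, $K_i=[0,1]e_i+\varepsilon([0,1]e_j+[0,1]e_k)$. For these the volume factors as $\prod_m(x_m+\varepsilon\sum_{l\ne m}x_l)$. Expanding to leading order gives $R_{ii}\to 6/4=3/2$, so boxes fall short. This suggests replacing the $\varepsilon$-thickening by something less "product-like". My next choice is $K_i=[0,e_i]+\varepsilon T$ with $T$ a fixed simplex. I would also allow thickenings $\varepsilon T_i$ that differ with $i$, for instance triangles lying in the coordinate plane orthogonal to $e_i$, because a triangle cross-section rather than a square one should change the leading constants from $6/4$ toward $2$. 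For polytopes of this form, all mixed volumes $V(K_1^{\alpha_1},K_2^{\alpha_2},K_3^{\alpha_3})$ can be computed exactly from the Minkowski polynomial formula (or by decomposing zonotope-plus-simplex sums into prisms). After that, we read off the leading terms of $c_{300},c_{210},c_{201},c_{111}$ as $\varepsilon\to0$, and by the $S_3$-symmetry of the construction the other two diagonal ratios follow.

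The main obstacle is this realization step: finding a family whose leading-order mixed-volume constants give exactly $2$ in each diagonal ratio. If a single-parameter family stalls at a constant below $2$, as boxes do, I would introduce two scales. One option is to shrink a triangle cross-section of width $\varepsilon$ while sending the relative side lengths of the triangle to a degenerate limit. Another is to use $\delta\ll\varepsilon$ thickenings, so that the slice determinant $XYZ+2-X-Y-Z$ computed from the mixed-volume array also tends to $0$, matching the $E_i(t)$ pattern. At that point one optimizes over the shape parameters. Since the upper bound already gives $8$, reaching $2$ in each diagonal ratio in the limit completes the proof.
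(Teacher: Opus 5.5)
Your upper bound is correct and is the paper's argument: $R_{ii}$ is a triangular generator of $H_{e_i}(f)$, hence at most $2$, and $\mathcal V_3^3\subseteq\cL_{3,3}^+$.

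The gap is the lower bound, which is the actual content of the proposition. You never exhibit bodies with $\Phi_{\mathrm{diag}}\to8$, and the families you propose to try cannot reach it. Take $K_1=[0,e_1]+\varepsilon T_1$ with $T_1$ fixed and with two-dimensional projection $\pi T_1$ onto $e_1^\perp$. Use $V([0,e_1],T,T)=\frac13\operatorname{area}(\pi T)$ and $V([0,e_1],[0,e_2],T)=\frac16 w_3$, where $w_2,w_3$ are the lengths of the projections of $T_1$ onto the $x_2$- and $x_3$-axes. Multilinearity then gives
\begin{itemize}
\item $c_{300}=6\varepsilon^2\operatorname{area}(\pi T_1)+O(\varepsilon^3)$,
\item $c_{210}=2\varepsilon w_3+O(\varepsilon^2)$ and $c_{201}=2\varepsilon w_2+O(\varepsilon^2)$,
\item $c_{111}\to1$.
\end{itemize}
Hence $R_{11}\to\frac32\operatorname{area}(\pi T_1)/(w_2w_3)\le\frac32$, because $\pi T_1$ lies in its $w_2\times w_3$ bounding box. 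So no ``segment plus small thickening'' does better than your box value $3/2$. A triangular cross-section makes things worse: a right triangle with legs on the axes gives $3/4$. Optimizing shape parameters inside this family, as your last paragraph proposes, cannot close the gap, and your two-scale variants are not specified concretely enough to evaluate.

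The missing idea is to \emph{taper} the bodies along their long direction. The paper uses the square pyramid $P_1=\operatorname{conv}\{0,e_1,\varepsilon e_2,\varepsilon e_3,\varepsilon(e_2+e_3)\}$, which is the cone with apex $e_1$ over an $\varepsilon$-square, together with its cyclic permutations. Compared with the prism $[0,e_1]+\varepsilon([0,e_2]+[0,e_3])$, the volume drops by a factor $3$, while $V(P_1,P_1,P_j)$ drops only by a factor $2$. This turns $\frac64$ into $\frac64\cdot\frac43=2$. The exact values are:
\begin{itemize}
\item $\operatorname{Vol}(P_i)=\varepsilon^2/3$;
\item $V(P_i,P_i,P_j)=(\varepsilon+\varepsilon^2+\varepsilon^3)/6$, from the facet formula, where only the two slanted triangular facets contribute;
\item $V(P_1,P_2,P_3)=1/6$, by monotonicity, since $[0,e_i]\subseteq P_i\subseteq\operatorname{conv}\{0,e_1,e_2,e_3\}$.
\end{itemize}
Together these give $\Phi_{\mathrm{diag}}(F_P)=8/(1+\varepsilon+\varepsilon^2)^6\to8$.

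The square base matters as well. For a cone with apex $e_1$ over $\varepsilon T$, the facet formula gives $R_{11}\to2\operatorname{area}(T)/(w_2w_3)$. For example, the simplex $\operatorname{conv}\{0,e_1,\varepsilon e_2,\varepsilon e_3\}$ gives exactly $R_{11}=1$. Your triangle heuristic therefore points in the wrong direction.
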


\begin{proof}
The local bound $R_{ii}\le2$ gives $\Phi_{\mathrm{diag}}\le8$.

To prove sharpness among volume polynomials, let $e_1,e_2,e_3$ be the
standard basis of $\R^3$. For $0<\varepsilon\le1/2$, set
\[
P_1^{(\varepsilon)}
=
\operatorname{conv}
\{0,e_1,\varepsilon e_2,\varepsilon e_3,
  \varepsilon(e_2+e_3)\},
\]
and obtain $P_2^{(\varepsilon)}$ and $P_3^{(\varepsilon)}$ by cyclically
permuting the coordinates. We write $P_i=P_i^{(\varepsilon)}$ below.

These square pyramids satisfy
\[
\operatorname{Vol}(P_i)=\frac{\varepsilon^2}{3},
\qquad
V(P_i,P_i,P_j)
=
\frac{\varepsilon+\varepsilon^2+\varepsilon^3}{6}
\quad(i\ne j),
\]
and
\[
V(P_1,P_2,P_3)=\frac16.
\]
The first identity follows from the pyramid-volume formula. By symmetry, it
remains to compute $V(P_1,P_1,P_2)$. In the polytope mixed-area formula, only the two
noncoordinate triangular facets of $P_1$ contribute
\cite[Section~5.1]{Schneider}. Their outward unit normals are
$n_2=(\varepsilon,1,0)/\sqrt{1+\varepsilon^2}$ and
$n_3=(\varepsilon,0,1)/\sqrt{1+\varepsilon^2}$, and each facet has area
$\frac{\varepsilon}{2}\sqrt{1+\varepsilon^2}$. The corresponding support
values of $P_2$ are $1/\sqrt{1+\varepsilon^2}$ and
$(\varepsilon+\varepsilon^2)/\sqrt{1+\varepsilon^2}$. The coordinate facets
contribute zero, so the formula gives
\[
3V(P_1,P_1,P_2)
=
\frac{\varepsilon}{2}
+
\frac{\varepsilon(\varepsilon+\varepsilon^2)}{2}.
\]
To obtain the final mixed volume, put $S_i=[0,e_i]$ and
$\Delta=\operatorname{conv}\{0,e_1,e_2,e_3\}$. Since
$S_i\subseteq P_i\subseteq\Delta$ and
$V(S_1,S_2,S_3)=V(\Delta,\Delta,\Delta)=1/6$, monotonicity forces
$V(P_1,P_2,P_3)=1/6$.

These identities yield
\[
\Phi_{\mathrm{diag}}(F_P)
=
\frac{8}{(1+\varepsilon+\varepsilon^2)^6}
\longrightarrow8
\qquad(\varepsilon\to0^+).
\]
Thus volume polynomials asymptotically attain the Lorentzian optimum.
\end{proof}

\subsection{A strict mixed-volume gap}
\label{subsec:app-geometric-gap}

Consider the coefficient functional
\[
\Psi(f):=\frac{c_{201}c_{102}c_{030}}{c_{111}^3}
=
R_{12}R_{21}R_{22}R_{23}R_{32}.
\]

\begin{proposition}[A strict geometric gap]
\label{prop:app-geometric-gap}
Set
\[
C_{\!*}
:=
\frac{32}{27}\bigl(13\sqrt{13}-46\bigr)
=
1.0336789108\ldots.
\]
Then
\[
\sup_{f\in\cL_{3,3}^+}\Psi(f)=C_{\!*},
\qquad
\sup_{F\in\mathcal V_3^3}\Psi(F)=1.
\]
In particular, the mixed-volume optimum is strictly smaller than the
Lorentzian optimum.
\end{proposition}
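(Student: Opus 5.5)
The plan is to treat the two suprema separately: the Lorentzian value comes from the variational formula of Theorem~\ref{thm:cubic-optimal}, and the mixed-volume value comes from Aleksandrov--Fenchel-type inequalities.

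\textbf{Lorentzian supremum.} Since $\Psi=R_{12}R_{21}R_{22}R_{23}R_{32}$, the functional is $w_\lambda$ with $\lambda=(0,1,0,1,1,1,0,1,0)$ in the ordering of Section~\ref{sec:cubic}. Theorem~\ref{thm:cubic-optimal} then gives
\[
\log\sup_{f\in\cL_{3,3}^+}\Psi(f)=\inf_{\mu,\nu\in\R}\Bigl[h(0,1-\nu,\mu-\nu)+h(1-\mu,1,1-\mu+\nu)+h(\mu,1+\nu,0)\Bigr].
\]
Finiteness forces all arguments to be nonnegative, which confines $(\mu,\nu)$ to the triangle $\nu\le\mu\le1$, $\nu\ge-1$, $\mu\ge0$ (together with $\nu\le 1$). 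For the first and third slices only two coordinates are nonzero, so each reduces to the two-ratio bound already computed in Theorem~\ref{thm:cubic-two-generator}(i):
\[
h(0,p,q)=(p+q)\log2+p\log p+q\log q-(p+q)\log(p+q).
\]
The middle slice has all three exponents positive and needs the explicit formula of \cite[Theorem~C]{HHSW}, extended by homogeneity as in the remark after Lemma~\ref{lem:local-support-functions}.

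The symmetry $x_1\leftrightarrow x_3$ fixes $\Psi$. I expect it to force the optimal multipliers onto a one-parameter line, most likely $\nu=\mu-1$ or a similar relation; I would check this by comparing the first and third arguments. The problem then becomes a single-variable convex minimization. Its stationarity equation should be quadratic with discriminant involving $13$, which is where $\sqrt{13}$ and the constant $C_{\!*}=\frac{32}{27}(13\sqrt{13}-46)$ should come from. This one-variable computation, together with evaluating the HHSW support function at the resulting point, is the main obstacle.

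To confirm the value I would also argue from the primal side. Using the stationarity conditions, I would read off the three optimal rows in $\mathcal R_{\mathrm{loc}}$ (boundary points of $\cC$ with normals $\lambda_\beta+\eta_\beta$). I would then check the two relations of Lemma~\ref{lem:lifting}(i) directly, so that Lemma~\ref{lem:lifting}(ii) realizes a Lorentzian cubic with $\Psi=C_{\!*}$. Numerically, $C_{\!*}\approx1.0337$ is a useful sanity check.

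\textbf{Mixed-volume supremum.} Taking $K_1=K_2=K_3$ makes every $c_\alpha$ equal, so $\Psi=1$ and the supremum is at least $1$. For the upper bound I need
\[
V(K_1,K_1,K_3)\,V(K_1,K_3,K_3)\,V(K_2,K_2,K_2)\le V(K_1,K_2,K_3)^3 .
\]
I would try to obtain this as a product of Aleksandrov--Fenchel inequalities with the common factor $K_2$. First, $V(K_1,K_2,K_3)^2\ge V(K_1,K_1,K_2)V(K_3,K_3,K_2)$ should let me trade the $K_2$-containing terms against the target terms. Second, I would use the log-concavity of $t\mapsto V(K_2[3-t],\,\cdot\,)$ along the segment from $K_2$ to a Minkowski combination of $K_1$ and $K_3$. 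Concretely, I would apply AF in the form $V(A,B,K_2)^2\ge V(A,A,K_2)V(B,B,K_2)$ and the cubic version $V(A,A,B)^3\ge V(A,A,A)^2V(B,B,B)$ with $A=K_2$, $B=K_1+K_3$ (or $B=sK_1+tK_3$), expand in $s,t$, and optimize over $s,t$.

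If this chaining leaves a gap, I would instead reduce to zonotopes or polytopes by continuity, or use the Shephard-type determinantal inequalities for mixed volumes. This step is the second place where I expect genuine difficulty. A useful consistency check is that the inequality cannot follow from the Lorentzian slice inequalities alone, since $C_{\!*}>1$.
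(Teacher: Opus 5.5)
\textbf{The gap: the mixed-volume bound.} Your treatment of $\sup_{\mathcal V_3^3}\Psi\le1$ has a genuine gap, and the chaining you propose cannot close it even in principle.

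Every inequality you plan to combine remains true for every $f\in\cL_{3,3}^+$, once $V(u,v,w)$ is read as $\frac16\partial_u\partial_v\partial_w f$ for $u,v,w\in\R_{\ge0}^3$. This covers $V(P,Q,R)^2\ge V(P,P,R)V(Q,Q,R)$ and $V(P,P,Q)^3\ge\operatorname{Vol}(P)^2\operatorname{Vol}(Q)$ for $P,Q,R$ nonnegative Minkowski combinations of $K_1,K_2,K_3$, followed by optimization over the combination parameters. Indeed, $\partial_wf$ is a Lorentzian quadratic with nonnegative Hessian, so reverse Cauchy--Schwarz holds on the nonnegative orthant. The cubic inequality follows from log-concavity of the restriction of $f$ to a two-dimensional nonnegative cone.

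Such a chain would therefore give $\Psi\le1$ on all of $\cL_{3,3}^+$. That contradicts the value $C_{\!*}>1$ you are proving in the first half. You noticed that the slice inequalities alone cannot suffice, but AF inside the cone spanned by $K_1,K_2,K_3$, and determinantal consequences of it, is no stronger. The zonotope fallback is also unavailable: zonotopes are dense only in zonoids, a proper subclass of convex bodies in $\R^3$.

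The missing idea is a body outside that cone. After reducing to bodies with nonempty interior, the paper takes the mixed body $K=[K_1,K_3]$. It exists by Minkowski's existence theorem and satisfies $V(L,K,K)=V(L,K_1,K_3)$ for every convex body $L$.
\begin{itemize}
\item Taking $L=K$ and $L=K_3$, and applying AF to $K_1,K,K_3$, gives $\operatorname{Vol}(K)^2\ge V(K_1,K_1,K_3)V(K_1,K_3,K_3)$.
\item Taking $L=K_2$ and applying Minkowski's first inequality gives $V(K_1,K_2,K_3)^3=V(K_2,K,K)^3\ge\operatorname{Vol}(K_2)\operatorname{Vol}(K)^2$.
\end{itemize}
Combining the two bounds proves the required inequality.

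\textbf{The Lorentzian half.} Your dual route is a valid alternative to the paper's primal argument. The paper sets $t^2=R_{13}R_{31}=R_{21}R_{23}$ and bounds $R_{12}R_{32}\le(2-R_{13})(2-R_{31})\le(2-t)^2$ and $R_{22}\le2/(1+t)$. It then maximizes $2t^2(2-t)^2/(1+t)$ and approaches the bound by an explicit family, so it never needs HHSW. Your version needs several corrections.
\begin{itemize}
\item The swap $x_1\leftrightarrow x_3$ acts on the multipliers by $(\mu,\nu)\mapsto(\mu-\nu,-\nu)$. Convexity lets you restrict to its fixed line $\nu=0$, not $\nu=\mu-1$. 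The problem becomes minimizing $2h(0,1,\mu)+h(1-\mu,1,1-\mu)$.
\item The middle maximizer has the form $(X,2/(1+X),X)$, so HHSW Theorem~C is unnecessary. With $t:=2\mu/(1+\mu)$, stationarity says that $X=t$, which yields $3t^2+2t-4=0$.
\item The feasible region also requires $\mu\le1+\nu$.
\item The supremum is not attained. The outer optimal rows force $R_{11},R_{33}\to0$, so your primal confirmation needs a limiting family rather than a cubic with $\Psi=C_{\!*}$.
\end{itemize}
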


\begin{proof}
We first compute the Lorentzian optimum. Put
$x=R_{13}$, $y=R_{31}$, $p=R_{21}$, and $q=R_{23}$. By cubic
compatibility,
\[
xy=pq=:t^2,
\qquad 0<t\le1.
\]
The determinant condition for a feasible row gives
$2-X-Y\ge Z(1-XY)\ge0$. Applying it to the first and third rows yields
$R_{12}\le2-x$ and $R_{32}\le2-y$. Hence, by AM--GM,
\[
R_{12}R_{32}
\le
(2-x)(2-y)
\le
(2-t)^2.
\]
The middle row $(p,R_{22},q)$ gives, for $t<1$,
\[
R_{22}
\le
\frac{2-p-q}{1-t^2}
\le
\frac{2}{1+t}.
\]
When $t=1$, feasibility forces $p=q=1$ and again gives $R_{22}\le1$.
Therefore
\[
\Psi(f)
\le
\phi(t)
:=
\frac{2t^2(2-t)^2}{1+t},
\qquad 0<t\le1.
\]
Its derivative is
\[
\phi'(t)
=
\frac{2t(t-2)(3t^2+2t-4)}{(1+t)^2},
\]
which has the unique maximizer $t_{\!*}=(\sqrt{13}-1)/3$ on $(0,1]$.
At this point, $\phi(t_{\!*})=C_{\!*}$.

To approach this bound, fix $t=t_{\!*}$ and $A=2/(1+t)$. For
sufficiently small $\varepsilon>0$, put
$b_\varepsilon=2-t-\varepsilon$ and $s_\varepsilon=\varepsilon/2$, and
take the three ratio rows
\[
(s_\varepsilon,b_\varepsilon,t),
\qquad
(t,A,t),
\qquad
(t,b_\varepsilon,s_\varepsilon).
\]
For small $\varepsilon$, the first and third rows are strictly feasible and
the middle row has determinant zero. The compatibility identities are
\[
R_{13}R_{31}=t^2=R_{21}R_{23},
\qquad
R_{23}R_{32}=tb_\varepsilon=R_{12}R_{13}.
\]
Lemma~\ref{lem:lifting} realizes these data by
$f_\varepsilon\in\cL_{3,3}^+$, and
\[
\Psi(f_\varepsilon)
=
\frac{2t^2b_\varepsilon^2}{1+t}
\longrightarrow C_{\!*}.
\]

It remains to optimize over volume polynomials. Let
$A,B,C\subseteq\mathbb R^3$ be convex bodies. The inequality $\Psi\le1$
is equivalent to
\[
V(A,A,C)V(A,C,C)\operatorname{Vol}(B)
\le
V(A,B,C)^3.
\]
Approximation and continuity reduce the proof to bodies with nonempty
interior.

The mixed-body theorem provides a convex body $K=[A,C]$, unique up to
translation, for which
\[
V(L,K,K)=V(L,A,C)
\]
for every convex body $L\subseteq\mathbb R^3$
\cite{LutwakMixedBodies}; see also \cite[Sections~5.1 and~7.3]{Schneider}. Taking $L=K$ and $L=C$ gives
\[
V(A,K,C)=\operatorname{Vol}(K),
\qquad
V(K,K,C)=V(A,C,C).
\]
Applying the Alexandrov--Fenchel inequality to $A,K,C$ yields
\[
\operatorname{Vol}(K)^2
=
V(A,K,C)^2
\ge
V(A,A,C)V(K,K,C)
=
V(A,A,C)V(A,C,C).
\]

The choice $L=B$ gives
\[
V(A,B,C)=V(B,K,K).
\]
Minkowski's first inequality then implies
\[
V(A,B,C)^3
=
V(B,K,K)^3
\ge
\operatorname{Vol}(B)\operatorname{Vol}(K)^2.
\]
Combining these estimates proves the required mixed-volume inequality.
Thus every volume polynomial satisfies $\Psi\le1$. Taking $A=B=C$ gives
equality, so the constant $1$ is sharp.
\end{proof}

Consequently, a full-support Lorentzian cubic with
$c_{201}c_{102}c_{030}>c_{111}^3$ cannot be a volume polynomial.

\subsection{A strict combinatorial improvement}
\label{subsec:app-matroid}

Let $M$ be a rank-three matroid on
\[
E=E_1\sqcup E_2\sqcup E_3,
\]
and set
\[
b_{abc}
=
\#\{B\in\mathcal B(M):
|B\cap E_1|=a,\ |B\cap E_2|=b,\ |B\cap E_3|=c\}.
\]
The grouped basis polynomial
\[
B_{M,E}(x_1,x_2,x_3)
=
\sum_{a+b+c=3} b_{abc}x_1^ax_2^bx_3^c
\]
is Lorentzian: the basis-generating polynomial is Lorentzian by
\cite[Theorem~3.10]{BH}, and identifying variables within each color class
preserves Lorentzianity by \cite[Theorem~2.10]{BH}.

In factorial normalization,
\[
c_{abc}=a!b!c!\,b_{abc}.
\]
Hence the Lorentzian bound $\Phi_{\mathrm{diag}}\le 8$, extended to arbitrary
support by density \cite[Theorem~2.25]{BH}, gives
\[
b_{111}^3b_{300}b_{030}b_{003}
\le
\frac{64}{27}\,
b_{210}b_{201}b_{120}b_{021}b_{102}b_{012}.
\]
For rank-three matroids, the Rayleigh property improves the constant
$\frac{64}{27}$ to $1$.

\begin{proposition}[Rayleigh improvement]
\label{prop:app-matroid}
For every rank-three matroid and every three-coloring,
\[
b_{111}^3b_{300}b_{030}b_{003}
\le
b_{210}b_{201}b_{120}b_{021}b_{102}b_{012}.
\]
The constant $1$ is sharp.
\end{proposition}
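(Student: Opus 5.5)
The plan is to prove the inequality as a product of three ``one-corner'' inequalities,
\[
b_{300}b_{111}\le b_{210}b_{201},\qquad
b_{030}b_{111}\le b_{120}b_{021},\qquad
b_{003}b_{111}\le b_{102}b_{012},
\]
and then multiply them. This is exactly the local rhombus bound $R_{ii}\le 2$ sharpened. Indeed, in factorial normalization $R_{11}=c_{111}c_{300}/(c_{210}c_{201})=\tfrac32\,b_{300}b_{111}/(b_{210}b_{201})$. So the Lorentzian bound alone only gives the factor $\tfrac43$ per corner, which produces the $\tfrac{64}{27}$ above, and the Rayleigh property must remove it.

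To get the first inequality, let $f_M(\mathbf y)=\sum_{B\in\mathcal B(M)}\prod_{e\in B}y_e$ be the multiaffine basis polynomial. Every matroid of rank at most three is Rayleigh (balanced), so for distinct $e,f$ and all $\mathbf y\in\R_{\ge0}^E$,
\[
\partial_e f_M\,\partial_f f_M-f_M\,\partial_e\partial_f f_M\ge0.
\]
The grouped polynomial is $g=B_{M,E}(x_1,x_2,x_3)=f_M(\mathbf y)$ with $y_e=x_c$ for $e\in E_c$. By the chain rule, $\partial_{x_2}g=\sum_{e\in E_2}\partial_e f_M$ and $\partial_{x_3}g=\sum_{e\in E_3}\partial_ef_M$, and likewise $\partial_{x_2}\partial_{x_3}g=\sum_{e\in E_2,\,f\in E_3}\partial_e\partial_f f_M$. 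Since $E_2\cap E_3=\varnothing$, every pair $(e,f)$ in these double sums is a pair of distinct elements. Summing the Rayleigh inequalities over $e\in E_2$ and $f\in E_3$ therefore gives
\[
\partial_2 g\,\partial_3 g\ \ge\ g\,\partial_2\partial_3 g
\qquad\text{on }\R_{\ge0}^3 .
\]
Evaluating at $\mathbf x=(1,0,0)$ kills every monomial with positive $x_2$ or $x_3$ degree beyond what each derivative removes. This leaves $g=b_{300}$, $\partial_2g=b_{210}$, $\partial_3g=b_{201}$ and $\partial_2\partial_3g=b_{111}$, so $b_{210}b_{201}\ge b_{300}b_{111}$. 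The same argument with the colors permuted, at $(0,1,0)$ and $(0,0,1)$, gives the other two corner inequalities. All quantities are nonnegative, so multiplying the three inequalities yields the proposition.

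For sharpness, the cheap route is a degenerate coloring in which both sides vanish, for instance when some color class has fewer than three elements that are independent. If one instead wants equality with positive terms, I would search among transversal-type examples. Take $M$ to be the rank-three matroid of three parallel classes $P_1,P_2,P_3$, colored arbitrarily, so that $g=\prod_{i=1}^3\bigl(\sum_c n_{ci}x_c\bigr)$ is a product of linear forms. I would then tune the multiplicities $n_{ci}$ so that each Rayleigh difference vanishes at the three coordinate points. I expect this sharpness step to be the main obstacle. The Rayleigh inequality argument itself is mechanical once one notes that distinct color classes give distinct elements. By contrast, uniform matroids only give the ratio $2(m-2)/(3(m-1))<1$, so an equality case with positive coefficients, if the authors intend one, needs a specific construction. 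I would fall back on the degenerate equality if no such configuration appears.
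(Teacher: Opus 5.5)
Your proof of the inequality is correct and matches the paper's. Both invoke Wagner's theorem that rank-three matroids are Rayleigh, sum $\Delta_{ef}Z_M$ over $e\in E_2$, $f\in E_3$ to get $\partial_2 g\,\partial_3 g\ge g\,\partial_2\partial_3 g$, evaluate at $(1,0,0)$ to obtain $b_{300}b_{111}\le b_{210}b_{201}$, and multiply the three cyclic versions. One small point: ``balanced'' is not a synonym for Rayleigh. At the $0/1$ point you actually use, negative correlation in minors would suffice anyway.

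The gap is in the sharpness claim. Your fallback, a degenerate coloring where both sides vanish, proves nothing. The relation $0\le C\cdot 0$ holds for every $C\ge 0$, so such an example cannot show that the constant $1$ cannot be lowered. Sharpness needs colorings with both sides positive and the ratio of left to right side arbitrarily close to $1$.

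Your second idea, products of linear forms from three parallel classes, is the right family but has the wrong target. For $g=\prod_{i=1}^3(a_ix_1+b_ix_2+c_ix_3)$ one computes
\[
b_{210}b_{201}-b_{300}b_{111}=\sum_{i=1}^3 b_ic_i\prod_{j\ne i}a_j^2,
\]
and analogously at the other two corners. If the left-hand side is positive, then $b_{300}b_{030}b_{003}>0$ forces all $a_i,b_i,c_i>0$. Every corner defect is then strictly positive. So exact equality with positive coefficients is impossible in this family, and sharpness can only be asymptotic.

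What is missing is an explicit near-extremal family. The paper takes $M_N=U_{1,N+2}^{\oplus 3}$ with color multiplicities $(N,1,1)$, $(1,N,1)$, $(1,1,N)$ in the three components, so that
\[
B_{M_N,E}=(Nx_1+x_2+x_3)(x_1+Nx_2+x_3)(x_1+x_2+Nx_3).
\]
This gives the following coefficients:
\begin{itemize}
\item $b_{300}=b_{030}=b_{003}=N$;
\item $b_{111}=N^3+3N+2$, which is the permanent of the coefficient matrix;
\item each of the six mixed coefficients equals $N^2+N+1$.
\end{itemize}
The ratio is therefore
\[
\frac{N^3(N^3+3N+2)^3}{(N^2+N+1)^6}\longrightarrow 1 .
\]
Equivalently, each corner defect equals $2N^3+1$, which is of lower order than $b_{210}b_{201}=(N^2+N+1)^2$. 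Letting each linear form be dominated by a different variable is what makes all three Rayleigh differences negligible at once.
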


\begin{proof}
By \cite[Theorem~1.1]{WagnerRankThree}, every rank-three matroid is Rayleigh.
Write
\[
Z_M(\mathbf y)
=
\sum_{B\in\mathcal B(M)}\prod_{e\in B}y_e,
\qquad
\Delta_{ef}Z_M
=
(\partial_eZ_M)(\partial_fZ_M)-Z_M\partial_{ef}Z_M.
\]
Thus $\Delta_{ef}Z_M\ge 0$ on the positive orthant. After specializing
$y_e=x_i$ for $e\in E_i$,
\[
(\partial_2B_{M,E})(\partial_3B_{M,E})
-
B_{M,E}\partial_{23}B_{M,E}
=
\left.
\sum_{e\in E_2}\sum_{f\in E_3}\Delta_{ef}Z_M
\right|_{y_g=x_i,\ g\in E_i}
\ge 0.
\]
Setting $x_2=x_3=0$ by continuity gives
\[
x_1^4\bigl(b_{210}b_{201}-b_{300}b_{111}\bigr)\ge 0,
\]
hence
\[
b_{300}b_{111}\le b_{210}b_{201}.
\]
Multiplying this inequality with its two cyclic analogues proves the claim.

For sharpness, take
\[
M_N
=
U_{1,N+2}\oplus U_{1,N+2}\oplus U_{1,N+2},
\]
with color multiplicities
\[
(N,1,1),\qquad (1,N,1),\qquad (1,1,N)
\]
in the three components. Then
\[
B_{M_N,E}(x,y,z)
=
(Nx+y+z)(x+Ny+z)(x+y+Nz),
\]
and therefore
\[
\frac{
b_{111}^3b_{300}b_{030}b_{003}
}{
b_{210}b_{201}b_{120}b_{021}b_{102}b_{012}
}
=
\frac{N^3(N^3+3N+2)^3}{(N^2+N+1)^6}
\longrightarrow 1.
\]
\end{proof}

\begin{remark}[Combinatorial interpretation] Proposition~\ref{prop:app-matroid} has two elementary specializations. If $M$ is the direct sum of three rank-one uniform matroids, viewed as three boxes with three-colored elements, let $N_{abc}$ denote the number of choices of one element from each box with color profile $(a,b,c)$; then $b_{abc}=N_{abc}$. If $M=M(G)$ is the graphic matroid of a connected four-vertex graph with three-colored edges, let $t_{abc}$ denote the number of spanning trees with color profile $(a,b,c)$; then $b_{abc}=t_{abc}$. Thus the three-box and three-color spanning-tree inequalities are two specializations of the same rank-three Rayleigh inequality.
\end{remark}

\bibliographystyle{amsplain}
\bibliography{reference}

\end{document}